\documentclass{amsart}
\usepackage{amssymb, amsmath}
\usepackage{bbm}
\newtheorem{theorem}{Theorem}[section]
\newtheorem{lemma}[theorem]{Lemma}
\newtheorem{proposition}[theorem]{Proposition}

\newtheorem{claim}[theorem]{Claim}
\newtheorem{subclaim}[theorem]{Subclaim}
\newtheorem{fact}[theorem]{Fact}

\newenvironment{definition}[1][Definition]{\begin{trivlist}
\item[\hskip \labelsep {\bfseries #1}]}{\end{trivlist}}

\begin{document}
\title{Good and Bad Points in Scales}
\author{Chris Lambie-Hanson}
\address{Department of Mathematical Sciences, Carnegie Mellon University \\ Pittsburgh, PA 15213}
\email{clambieh@andrew.cmu.edu}
\thanks{This work will form a part of the author's Ph.D. thesis, completed under the supervision of James Cummings, whom the author would like to thank for many helpful conversations.}
\date{\today}
\begin{abstract}
	We address three questions raised by Cummings and Foreman regarding a model of Gitik and Sharon. We first analyze the PCF-theoretic structure of the Gitik-Sharon model, determining the extent of good and bad scales. We then classify the bad points of the bad scales existing in both the Gitik-Sharon model and other models containing bad scales. Finally, we investigate the ideal of subsets of singular cardinals of countable cofinality carrying good scales.
\end{abstract}
\maketitle

\section{Introduction}

The study of singular cardinals and their successors has become of central importance in combinatorial set theory and is intimately related to questions regarding large cardinals, inner model theory, and cardinal arithmetic. One of the most useful tools at our disposal for the study of singulars and their successors is Shelah's PCF theory, in which the investigation of the cofinalities of reduced products of regular cardinals has been used to obtain a number of remarkable results (see \cite{abrahammagidor} for a good introduction to the topic). In this paper, we analyze the PCF-theoretic structure of a model of Gitik and Sharon \cite{gitiksharon}. We then conclude with a few results about the ideal of sets that carry good scales.  

Our notation is for the most part standard. Unless otherwise stated, \cite{jech} is our reference for notation and terminology. If $A$ is a set of cardinals, then $\prod A$ is the set of functions $f$ such that $\mathrm{dom}(f) = A$ and, for every $\lambda \in A$, $f(\lambda) \in \lambda$. If $A = \{\kappa_i \mid i < \eta \}$, we will often write $\prod_{i<\eta} \kappa_i$ instead of $\prod A$ and will write $f(i)$ instead of $f(\kappa_i)$. If $A$ has no maximum element and $f,g \in \prod A$, then we write $f <^* g$ to mean that there is $\gamma \in A$ such that, for every $\lambda \in A\setminus \gamma$, $f(\lambda) < g(\lambda)$. If $\lambda$ is a regular cardinal, then $\mathrm{cof}(\lambda)$ is the class of ordinals $\alpha$ such that $\mathrm{cf}(\alpha)=\lambda$. Expressions such as $\mathrm{cof}(>\lambda)$ are defined in the obvious way. If $x$ is a well-ordered set, then $\mathrm{otp}(x)$ is the order type of $x$. If $\kappa \leq \lambda$ are cardinals, then $\mathcal{P}_\kappa(\lambda) = \{x \subset \lambda \mid |x| < \kappa \}$. If $x,y \in \mathcal{P}_\kappa(\lambda)$ are such that $x\cap \kappa \in \kappa$ and $y\cap \kappa \in \kappa$, then we write $x \prec y$ if $x\subseteq y$ and $\mathrm{otp}(x) < y \cap \kappa$. 

We start by recalling some basic definitions.

\begin{definition}
If $A$ is a set of regular cardinals, then $A$ is {\it progressive} if $|A| < \min(A)$.
\end{definition}

For technical reasons, we assume throughout this paper that we are working with progressive sets of regular cardinals.

\begin{definition}
Suppose $\kappa$ is a singular cardinal and $A$ is a cofinal subet of $\kappa$ consisting of regular cardinals. $\overrightarrow{f} = \langle f_\alpha \mid \alpha < \mu \rangle$ is called a {\it scale of length $\mu$ in $\prod A$} if the following hold:
\begin{enumerate}
\item{For all $\alpha < \mu$, $f_\alpha \in \prod A$.}
\item{For all $\alpha < \beta < \mu$, $f_\alpha <^* f_\beta$.}
\item{For all $h \in \prod A$, there is $\alpha < \mu$ such that $h<^* f_\alpha$.}
\end{enumerate}
In other words, $\overrightarrow{f}$ is increasing and cofinal in $(\prod A, <^*)$. We say that $A$ {\it carries a scale of length $\mu$} if there is a scale of length $\mu$ in $\prod A$.
\end{definition}

We note that we will typically consider scales in $\prod A$ when $\mathrm{otp}(A) = \mathrm{cf}(\kappa)$, but this need not necessarily be the case. A simple diagonalization argument shows that, if $2^\kappa = \kappa^+$, then every cofinal $A\subset \kappa$ consisting of regular cardinals carries a scale of length $\kappa^+$. One of the fundamental results of PCF theory is a theorem of Shelah stating that for every singular cardinal $\kappa$, there is an $A\subseteq \kappa$ that carries a scale of length $\kappa^+$ \cite{shelah}.

\begin{definition}
Let $\kappa$ be a singular cardinal, $A\subset \kappa$ a cofinal set of regular cardinals, and $\overrightarrow{f} = \langle f_\alpha \mid \alpha < \mu \rangle$ a scale in $\prod A$.
\begin{enumerate}
\item{$\alpha < \mu$ is called a {\it good point for $\overrightarrow{f}$ (very good point for $\overrightarrow{f}$)} if $\mathrm{cf}(\kappa) < \mathrm{cf}(\alpha) < \kappa$ and there are an unbounded (club) $C\subseteq \alpha$ and $\eta < \kappa$ such that, for all $\gamma< \gamma'$, both in $C$, $f_\gamma \restriction (A\setminus \eta) < f_{\gamma'} \restriction (A\setminus \eta)$.}
\item{$\alpha < \mu$ is called a {\it bad point for $\overrightarrow{f}$} if $\mathrm{cf}(\kappa) < \mathrm{cf}(\alpha) < \kappa$ and $\alpha$ is not a good point for $\overrightarrow{f}$.}
\item{$\overrightarrow{f}$ is a {\it good scale (very good scale)} if $\mu = \kappa^+$ and there is a club $C \subseteq \kappa^+$ such that every $\alpha \in C \cap \mathrm{cof}(>\mathrm{cf}(\kappa))$ is a good point (very good point) for $\overrightarrow{f}$.}
\item{$\overrightarrow{f}$ is a {\it bad scale} if $\mu = \kappa^+$ and $\overrightarrow{f}$ is not a good scale.}
\end{enumerate}
\end{definition}

An intricate web of implications connects the existence of good and very good scales with various other combinatorial principles, including squares, approachability, and stationary reflection, at successors of singular cardinals. We record some of the relevant facts here. Let $\kappa$ be a singular cardinal.

\begin{itemize}
\item{If $A\subseteq \kappa$ carries a good scale, then every scale in $\prod A$ is good \cite{foremanmagidor}.}
\item{If $\square_\kappa$ holds, then every $A\subseteq \kappa$ which carries a scale of length $\kappa^+$ carries a very good scale \cite{cfm}.}
\item{If $AP_\kappa$ holds, then every $A\subseteq \kappa$ which carries a scale of length $\kappa^+$ carries a good scale \cite{foremanmagidor}.}
\end{itemize}

The interested reader is referred to \cite{cfm} and \cite{foremanmagidor} for more details.

Woodin asked whether the failure of SCH at $\kappa$ implies that $\square^*_\kappa$ holds, and Cummings, Foreman, and Magidor \cite{cfm} asked whether the existence of a very good scale of length $\kappa^+$ implies that $\square^*_\kappa$ holds. Gitik and Sharon, in \cite{gitiksharon}, answer both of these questions by producing, starting with a supercompact cardinal, a model in which there is a singular strong limit cardinal $\kappa$ of cofinality $\omega$ such that $2^\kappa = \kappa^{++}$, $AP_\kappa$ fails (and hence $\square^*_\kappa$ fails), and there is an $A\subseteq \kappa$ that carries a very good scale. 

Cummings and Foreman, in \cite{cummingsforeman}, show that, in the model of \cite{gitiksharon}, there is a $B\subseteq \kappa$ that carries a bad scale, thus providing another proof of the failure of $AP_\kappa$. Cummings and Foreman go on to raise a number of other questions, three of which we address in this paper:

\begin{enumerate}
\item{Do there exist any other interesting scales in the model of \cite{gitiksharon}?}
\item{Into which case of Shelah's Trichotomy Theorem do the bad points of the bad scale in \cite{cummingsforeman} fall?}
\item{When the first PCF generator exists, does it have a maximal (modulo bounded subsets of $\kappa$) subset which carries a good scale?}
\end{enumerate}

\section{Diagonal Supercompact Prikry Forcing} \label{priksec}

We review here some key facts from Gitik and Sharon's construction in \cite{gitiksharon}. At the heart of their argument is a diagonal version of supercompact Prikry forcing.

Let $\kappa$ be a supercompact cardinal, let $\mu = \kappa^{+\omega+1}$, and let $U$ be a normal, fine ultrafilter over $\mathcal{P}_\kappa (\mu)$. For $n<\omega$, let $U_n$ be the projection of $U$ on $\mathcal{P}_\kappa(\kappa^{+n})$, i.e. $X \in U_n$ if and only if $\{y \in \mathcal{P}(\kappa^{+\omega+1}) \mid y\cap \kappa^{+n} \in X \} \in U$. Note that each $U_n$ is a normal, fine ultrafilter over $\mathcal{P}_\kappa(\kappa^{+n})$ concentrating on the set $X_n = \{x\in \mathcal{P}_\kappa(\kappa^{+n}) \mid \kappa_x := x\cap \kappa \mbox{ is an inaccessible cardinal and, for all } i \leq n, \mathrm{otp}(x\cap \kappa^{+i}) = \kappa_x^{+i} \}$.

We are now ready to define the diagonal supercompact Prikry forcing, $\mathbb{Q}$. Conditions of $\mathbb{Q}$ are of the form $q = \langle x_0^q, x_1^q, \ldots , x_{n-1}^q, A_n^q, A_{n+1}^q, \ldots \rangle$, where 
\begin{enumerate}
\item{For all $i<n$, $x_i^q \in X_i$.}
\item{For all $i\geq n$, $A_i^q \in U_i$.}
\item{For all $i<j<n$, $x^q_i \prec x^q_{i+1}$.}
\item{For all $i<n$, $j\geq n$, and $y\in A_j^q$, $x^q_i \prec y$.}
\end{enumerate}
$n$ is called the {\it length} of $q$ and is denoted lh($q$). $\langle x_0^q, \ldots ,x_{n-1}^q \rangle$, denoted $s(q)$, is called the {\it lower part} of $q$, and $\langle A_n^q, A_{n+1}^q,\ldots \rangle$ is called the {\it upper part} of $q$. If $s$ is a lower part of length $n$, let $s{^\frown}\mathbbm{1}$ denote the condition $s{^\frown}\langle X_n, X_{n+1}, \ldots \rangle$. If $s = \langle x_0, \ldots ,x_{n-1} \rangle$ is a lower part and $q$ is a condition of the form $\langle B_0, B_1, \ldots \rangle$ such that, for every $i < n$, $x_i \in B_i$, then $s ^\frown q$ denotes the condition $\langle x_0, \ldots ,x_{n-1}$, $B'_n, B'_{n+1} \ldots \rangle$, where, for $i \geq n$, $B'_i = \{y \in B_i \mid x_{n-1} \prec y \}$.

For $p, q \in \mathbb{Q}$, $p \leq q$ if and only if 
\begin{enumerate}
\item{lh($p$) $\geq$ lh($q$).}
\item{For all $i<$lh($q$), $x_i^p = x_i^q$.}
\item{For all $i$ such that lh($q) \leq i <$lh($p$), $x_i^p \in A_i^q$.}
\item{For all $i\geq$lh($p$), $A_i^p \subseteq A_i^q$.}
\end{enumerate}
We say $p$ is a {\it direct extension of }$q$, and write $p \leq^* q$, if $p \leq q$ and lh($p$) = lh($q$).

We now summarize some relevant facts about $\mathbb{Q}$. The reader is referred to \cite{gitiksharon} for proofs.
\begin{itemize}
\item{(Diagonal intersection) Suppose that, for every lower part $s$, $A_s$ is an upper part such that $s{^\frown}A_s \in \mathbb{Q}$. Then there is a sequence $\langle B_n \mid n<\omega \rangle$ such that, for every $n$, $B_n \in U_n$ and, for every lower part $s$ of length $n$, every extension of $s{^\frown} \langle B_i \mid i\geq n \rangle$ is compatible with $s{^\frown}A_s$.}
\item{(Prikry property) Let $q\in \mathbb{Q}$ and let $\phi$ be a statement in the forcing language. Then there is $p\leq^* q$ such that $p \parallel \phi$. In particular, $\mathbb{Q}$ adds no new bounded subsets of $\kappa$.}
\item{The generic object added by $\mathbb{Q}$ is an $\omega$-sequence $\langle x_n \mid n<\omega \rangle$, where, for all $n<\omega$, $x_n \in X_n$ and $x_n \prec x_{n+1}$. Letting $\kappa_n = \kappa_{x_n}$, $\langle \kappa_n \mid n<\omega \rangle$ is cofinal in $\kappa$, so $\mathrm{cf}(\kappa)^{V[G]}=\omega$. $\bigcup_{n<\omega}x_n = \kappa^{+\omega}$, so, in $V[G]$, for all $i\leq \omega$, $\kappa^{+i}$ is an ordinal of cofinality $\omega$ and size $\kappa$.}
\item{Any two conditions with the same lower part are compatible. In particular, since there are only $\kappa^{+\omega}$-many lower parts, $\mathbb{Q}$ satisfies the $\kappa^{+\omega+1}$-c.c. Thus, $\kappa^{+\omega+1} = \mu$ is preserved in the extension, and $(\kappa^+)^{V[G]} = \mu$.}
\item{Let $\langle A_n \mid n<\omega \rangle$ be such that, for each $n<\omega$, $A_n \in U_n$. Then there is $n^*$ such that, for all $n\geq n^*$, $x_n \in A_n$.}
\item{If $A\in V[G]$ is a set of ordinals such that $\mathrm{otp}(A) = \nu$, where $\omega < \nu = \mathrm{cf}^V(\nu) < \kappa$, then there is an unbounded subset $B$ of $A$ such that $B\in V$.}
\end{itemize}

\section{Scales in the Gitik-Sharon Model}
In \cite{gitiksharon}, Gitik and Sharon obtain their desired model by starting with a supercompact cardinal, $\kappa$, performing an Easton-support iteration to make $2^{\kappa} = \kappa^{+\omega+2}$ while preserving the supercompactness of $\kappa$, and, in the resulting model, forcing with $\mathbb{Q}$. They then show that, in the final model, there is a very good scale in $\prod_{n<\omega} \kappa_n^{+\omega+1}$. We show that, with a bit more care in preparing the ground model, we can arrange so that there are other scales with many very good points.

Let $\kappa$ be supercompact, and suppose that GCH holds. Let $\mu = \kappa^{+\omega+1}$, let $U$ be a supercompactness measure on $\mathcal{P}_\kappa(\mu)$, and let $j:V\rightarrow M\cong Ult(V,U)$. If $\lambda$ is a regular cardinal, let $\mathbb{A}(\lambda)$ denote the full-support product of $\mathrm{Add}(\lambda^{+n}, \lambda^{+\omega+2})$ for $n<\omega$, where $\mathrm{Add}(\lambda^{+n}, \lambda^{+\omega+2})$ is the poset whose conditions are functions $f$ such that $\mathrm{dom}(f)\subseteq \lambda^{+\omega+2}$, $|\mathrm{dom}(f)|<\lambda^{+n}$, and, for every $\alpha \in \mathrm{dom}(f)$, $f(\alpha)$ is a partial function from $\lambda^{+n}$ to $\lambda^{+n}$ of size less than $\lambda^{+n}$. If $p=\langle p_n \mid n<\omega \rangle$ is a condition in $\mathbb{A}(\lambda)$ and $\alpha < \lambda^{+\omega+2}$, denote by $p\restriction \alpha$ the condition $\langle p_n\restriction \alpha \mid n<\omega \rangle$, and let $\mathbb{A}(\lambda)\restriction \alpha = \{p\restriction \alpha \mid p\in \mathbb{A}(\lambda)\}$. Let $\mathbb{P}$ denote the iteration with backward Easton support of $\mathbb{A}(\lambda)$ for inaccessible $\lambda \leq \kappa$. For each $\lambda$, let $\mathbb{P}_{<\lambda}$ denote the iteration below $\lambda$ and let $\mathbb{P}_\lambda$ be $\mathbb{P}_{<\lambda}*\mathbb{A}(\lambda)$. Let $G$ be $\mathbb{P}$-generic over $V$.

\begin{lemma} \label{preplem}
In $V[G]$, we can extend $j$ to $j^*$, a $\mu$-supercompactness embedding with domain $V[G]$, such that for every $n<\omega$ and every $\beta<j(\kappa^{+n})$, there is $g^n_\beta:\kappa^{+n}\rightarrow \kappa^{+n}$ such that $j^*(g^n_\beta)(\sup(j``\kappa^{+n}))=\beta$.
\end{lemma}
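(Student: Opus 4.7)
The plan is a Silver-style master-condition lifting of $j$ through $\mathbb{P}$, in which the target master condition is engineered to specify, at each ``seed ordinal'' $\sup(j``\kappa^{+n})$, enough Cohen-generic function values to realize every $\beta < j(\kappa^{+n})$. Writing $G = G_\kappa * H$ with $G_\kappa$ generic for $\mathbb{P}_{<\kappa}$ and $H$ generic for $\mathbb{A}(\kappa)$, I would factor, in $M$,
\[ j(\mathbb{P}) = \mathbb{P}_{<\kappa} * \mathbb{A}(\kappa) * \mathbb{P}^{\mathrm{tail}} * \mathbb{A}(j(\kappa)), \]
where $\mathbb{P}^{\mathrm{tail}}$ is the $M$-iteration at the $M$-inaccessibles strictly between $\kappa$ and $j(\kappa)$. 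Since $M^\mu \subseteq M$, there are no $M$-inaccessibles in $(\kappa,\mu^+]$, so $\mathbb{P}^{\mathrm{tail}}$ is $\mu^+$-closed in $M[G]$.

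The first step is to construct, in $V[G]$, an $M[G]$-generic filter $G^{\mathrm{tail}}$ for $\mathbb{P}^{\mathrm{tail}}$ by a diagonal-meeting construction of length $\mu^+$: GCH in $V$ gives $|j(\kappa)|^V \leq \mu^+$, so only $\mu^+$-many dense open subsets of $\mathbb{P}^{\mathrm{tail}}$ live in $M[G]$, and the $\mu^+$-closure permits one to meet them in a descending sequence.

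The heart of the argument is the construction of the master condition in $\mathbb{A}(j(\kappa))$. Working in $M[G * G^{\mathrm{tail}}]$, for each $n < \omega$ and each $\alpha \in \bigcup_{p \in H} \mathrm{dom}(p_n)$, I would set $m_n(j(\alpha)) = \bigcup \{ j(p_n(\alpha)) \mid p \in H\}$. Each $j(p_n(\alpha))$ has domain contained in $j``\kappa^{+n}$, so $\mathrm{dom}(m_n(j(\alpha))) \subseteq j``\kappa^{+n}$, and in particular it omits the ordinal $\sup(j``\kappa^{+n})$; the standard size estimates then confirm that $m = \langle m_n \mid n < \omega \rangle$ is a condition in $\mathbb{A}(j(\kappa))$ lying below every $j(p)$ for $p \in H$. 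Next, by GCH and the ultrapower representation, $|j(\kappa^{+n})|^V \leq (\kappa^{+n})^\mu \leq 2^\mu = \mu^+ = \kappa^{+\omega+2}$, a bound preserved in $V[G]$, so I would enumerate $j(\kappa^{+n}) = \{ \beta^n_\alpha \mid \alpha < \kappa^{+\omega+2}\}$ and extend $m_n$ to $m'_n$ by declaring
\[ m'_n(j(\alpha))\bigl(\sup(j``\kappa^{+n})\bigr) = \beta^n_\alpha \]
for every $\alpha < \kappa^{+\omega+2}$, adjoining $m'_n(j(\alpha))$ as a single-point function when $\alpha$ was not in the prior support. Consistency holds because $\sup(j``\kappa^{+n}) \notin \mathrm{dom}(m_n(j(\alpha)))$, and the relevant size bounds remain strictly below $j(\kappa^{+n})$.

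A second diagonal-meeting construction in $V[G]$ then produces an $M[G * G^{\mathrm{tail}}]$-generic $H^*$ for $\mathbb{A}(j(\kappa))$ containing $m' = \langle m'_n \mid n < \omega\rangle$, and $j$ lifts to $j^* : V[G] \to M[G * G^{\mathrm{tail}} * H^*]$. Writing $g^n_\alpha$ for the function $\kappa^{+n} \to \kappa^{+n}$ added at coordinate $\alpha$ of the $n$-th factor of $H$, we have $j^*(g^n_\alpha) = (H^*)_n(j(\alpha))$, whence $j^*(g^n_\alpha)(\sup(j``\kappa^{+n})) = \beta^n_\alpha$ directly from $m' \in H^*$; setting $g^n_\beta := g^n_\alpha$ whenever $\beta = \beta^n_\alpha$ yields the required witnesses. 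The main obstacle is verifying that the union defining $m$, taken over the $\mu^+$-many elements of $H$, genuinely lies in $M[G * G^{\mathrm{tail}}]$ rather than merely in $V[G]$; this is where the design of $\mathbb{P}$ as a Laver-style preparation iterating the full-support product $\mathbb{A}(\lambda)$ at every inaccessible $\lambda \leq \kappa$ becomes essential, as it is this specific choice of preparation that allows the $\mu$-closure of $M$ to survive into $M[G * G^{\mathrm{tail}}]$ in the standard fashion.
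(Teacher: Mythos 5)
Your overall architecture (factoring $j(\mathbb{P})$ as $\mathbb{P}_{<\kappa} * \mathbb{A}(\kappa) * \mathbb{P}^{\mathrm{tail}} * \mathbb{A}(j(\kappa))$ and building the tail generic in $V[G]$ by a closure-and-counting argument) matches the paper's, and that part is fine. The fatal problem is the step you yourself flag as ``the main obstacle'': the master condition $m$ cannot lie in $M[G * G^{\mathrm{tail}}]$, and no design of the preparation $\mathbb{P}$ repairs this. By genericity $\bigcup_{p \in H}\mathrm{dom}(p_n) = \kappa^{+\omega+2} = \mu^+$, so $\mathrm{dom}(m_n) = j``\mu^+$; hence $m \in M[G*G^{\mathrm{tail}}]$ would give $j``\mu^+ \in M[G*G^{\mathrm{tail}}]$. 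But under GCH we have $|\mathcal{P}_\kappa(\mu)| = \mu$, so $j$ is continuous at $\mu^+$ and $j``\mu^+$ is a cofinal subset of $j(\mu^+)$ of order type $\mu^+ < j(\kappa)$, while $j(\mu^+)$ is regular in $M$ and remains regular in $M[G*G^{\mathrm{tail}}]$ because the forcing has $M$-cardinality at most $j(\kappa) < j(\mu^+)$. The issue is not whether the $\mu$-closure of $M$ survives the forcing (it does); it is that your master condition is a $\mu^+$-sized object, one cardinal beyond the closure of $M$, which is exactly the regime in which master conditions for $\mathrm{Add}$-type forcings are unavailable. A partial master condition supported on only $\mu$ many coordinates would not rescue the lift, since one needs $j(p)$ in the new generic for every $p$ in the $\mathbb{A}(\kappa)$-generic, and those conditions have supports covering all $\mu^+$ coordinates.

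This is precisely why the paper proceeds by surgery rather than by a master condition: it first builds an arbitrary $M[G*G^{\mathrm{tail}}]$-generic $I^*$ for $j(\mathbb{A}(\kappa))$ by the same closure argument you use for the tail, and then alters each condition $p \in I^*$ individually, redefining $p_n(j(\eta))$ to be compatible with $j``f^n_\eta$ and to take the prescribed value $\delta^n_\eta$ at $\sup(j``\kappa^{+n})$. The point that makes this legitimate is local: $\mathrm{dom}(p_n)$ has $M$-cardinality less than $j(\kappa^{+n})$, hence is bounded in the $M$-regular cardinal $j(\mu^+)$, hence (as $j``\mu^+$ is cofinal in $j(\mu^+)$) meets $j``\mu^+$ in at most $\mu$ points; so each altered condition involves only $\mu$ many changes and stays inside $M[G*G^{\mathrm{tail}}]$ by $\mu$-closure, and a density argument shows the altered filter is still generic. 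Your remaining ingredients (the choice of the seed $\sup(j``\kappa^{+n})$, the fact that it avoids $\mathrm{dom}(j(p_n(\alpha)))$, and the enumeration of $j(\kappa^{+n})$ in order type $\mu^+$) are sound and reappear in the surgery version, but without replacing the master condition by surgery the proof does not go through. (A minor separate slip: $\mathrm{dom}(j(p_n(\alpha))) = j(\mathrm{dom}(p_n(\alpha)))$ is not contained in $j``\kappa^{+n}$ once $|\mathrm{dom}(p_n(\alpha))| \geq \kappa$; what is true, and what you actually need, is that $\mathrm{dom}(p_n(\alpha))$ is bounded in the regular cardinal $\kappa^{+n}$, so its $j$-image is bounded strictly below $\sup(j``\kappa^{+n})$.)
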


\begin{proof}
First note that, in V, for every $\alpha \leq \mu^+$, $|j(\alpha)| \leq |{^{\mathcal{P}_\kappa(\mu)}\alpha}| \leq \mu^+$, and, since ${^{\mu}M}\subseteq M$, $\mathrm{cf}(j(\lambda))=\mu^+$ for every regular $\lambda$ with $\kappa \leq \lambda \leq \mu^+$.. Thus, the number of antichains of $j(\mathbb{P}_{<\kappa})/G$ in $M[G]$ is $\mu^+$ and, since ${^{\mu}M[G]}\subseteq M[G]$ and $j(\mathbb{P}_{<\kappa})/G$ is $\mu^+$-closed, we can find $H\in V[G]$ that is $j(\mathbb{P}_{<\kappa})/G$-generic over $M[G]$. Let $G_\kappa=\langle f^n_\alpha \mid n<\omega, \ \alpha<\mu^+ \rangle$ be the generic object for $\mathbb{A}(\kappa)$ added by $G$. For $n<\omega$, let $\langle \delta^n_\alpha \mid \alpha < \mu^+ \rangle$ enumerate $j(\kappa^{+n})$.

As before, we can find, in $V[G]$, an $I^*$ which is $j(\mathbb{A}(\kappa))=\mathbb{A}(j(\kappa))^{M[G*H]}$-generic over $M[G*H]$. For $\alpha < j(\mu^+)$, let $I^*\restriction \alpha = \{p\restriction \alpha \mid p\in I^*\}$. Note that $I^*\restriction \alpha$ is $j(\mathbb{A}(\kappa))\restriction \alpha$-generic over $M[G*H]$. For each $\alpha < j(\mu^+)$, let $I\restriction \alpha$ be formed by minimally adjusting $I^*\restriction \alpha$ so that, for every $p\in I$, $n<\omega$, and $\eta < \mu^+$, if $j(\eta)<\alpha$ and $j(\eta)\in \mathrm{dom}(p_n)$, then $p_n(j(\eta))$ is compatible with $j``f^n_\eta$ and $p_n(j(\eta))(\sup(j``\kappa^{+n}))=\delta^n_\eta$. Since $j``\mu^+$ is cofinal in $j(\mu^+)$, the number of changes to each condition is at most $\mu$, so each adjusted $p$ is itself in $M[G*H]$ and $I\restriction \alpha$ is $j(\mathbb{A}(\kappa))\restriction \alpha$-generic over $M[G*H]$. Let 
\[
I=\bigcup_{\alpha<j(\mu^+)}I\restriction \alpha.
\]
By chain condition, every maximal antichain of $j(\mathbb{A}(\kappa))$ is a subset of $j(\mathbb{A}(\kappa))\restriction \alpha$ for some $\alpha < j(\mu^+)$, so $I$ is $j(\mathbb{A}(\kappa))$-generic over $M[G*H]$. Now $j``G \subseteq G*H*I$, so we can lift $j$ to $j^*$ with domain $V[G]$ and $j(G)=G*H*I$. By construction, for every $n<\omega$ and $\alpha<\mu^+$, $j^*(f^n_\alpha)(\sup(j``\kappa^{+n}))=\delta^n_\alpha$, so, for $\beta < j(\kappa^{+n})$, letting $g^n_\beta = f^n_\alpha$, where $\delta^n_\alpha = \beta$, gives $j^*$ the desired properties.
\end{proof}

Let $U^*$ be the measure on $\mathcal{P}_\kappa(\mu)$ derived from $j^*$, and, for $n<\omega$, let $U_n^*$ be the projection of $U^*$ onto $\mathcal{P}_\kappa(\kappa^{+n})$ and $j^*_{U^*_n}$ be the embedding derived from $U^*_n$. Note that $U_n^* = \{X\subseteq \mathcal{P}_\kappa(\kappa^{+n}) \mid j``\kappa^{+n} \in j^*(X) \}$. Also note that, for all $n < \omega$, the functions $\langle g^n_\alpha \mid \alpha<j(\kappa^{+n}) \rangle$ witness that $j^*_n(\kappa^{+n})=j(\kappa^{+n})$. Let $\mathbb{Q}$ be the diagonal supercompact Prikry forcing defined using the $U_n^*$'s. Let $H = \langle x_n \mid n<\omega \rangle$ be $\mathbb{Q}$-generic over $V[G]$, and let $\kappa_n = x_n \cap \kappa$.

\begin{theorem} \label{vgthm}
In $V[G*H]$, there is a scale in 
\[
\prod_{\substack{n<\omega\\ i\leq n}}\kappa_{n+1}^{+i}
\]
of length $\mu^+$ such that every $\alpha < \mu^+$ with $\omega<\mathrm{cf}(\alpha)<\kappa$ is very good.
\end{theorem}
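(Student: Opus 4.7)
The plan is to use the representative functions $g^i_\beta$ from Lemma \ref{preplem} to build the scale $\langle f_\beta:\beta<\mu^+\rangle$, inheriting increase and very-goodness from the $j^*$-identity $j^*(g^i_\beta)(\sup j``\kappa^{+i})=\beta$ via \L{}o\'{s} and the $\kappa$-completeness of the $U^*_i$.

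For each $\beta<\mu^+$ and each $i<\omega$, fix $g^i_\beta:\kappa^{+i}\to\kappa^{+i}$ from Lemma \ref{preplem}. Since $\beta<\mu^+<\sup j``\kappa^{+i}$ for $i\geq 1$, we may modify each $g^i_\beta$ on a $U^*_i$-measure-one set so that $g^i_\beta(\sup x)<\sup x$ and lands in a canonical position relative to $x$. Set
\[
f_\beta(n,i)\;=\;\pi^i_{x_{n+1}}\bigl(g^i_\beta(\sup(x_{n+1}\cap\kappa^{+i}))\bigr),
\]
where $\pi^i_{x_{n+1}}:x_{n+1}\cap\kappa^{+i}\to\kappa_{n+1}^{+i}$ is the transitive collapse, extended to $\kappa^{+i}$ by $\xi\mapsto\mathrm{otp}(\xi\cap x_{n+1})$. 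To prove $f_\beta<^* f_\gamma$ for $\beta<\gamma$, \L{}o\'{s}'s theorem for $U^*_i$ yields $B^i_{\beta,\gamma}:=\{x\in X_i:g^i_\beta(\sup x)<g^i_\gamma(\sup x)\}\in U^*_i$; since $U^*_i$ is the projection of $U^*_{n+1}$ for $n+1\geq i$, the set $\{y\in X_{n+1}:y\cap\kappa^{+i}\in B^i_{\beta,\gamma}\}$ is in $U^*_{n+1}$, and the second-to-last bullet of Section \ref{priksec} forces $x_{n+1}$ into it for all but finitely many $n$. Cofinality of $\langle f_\beta\rangle$ is then handled by a standard argument combining the Prikry property and the diagonal-intersection property of Section \ref{priksec}: any $h\in V[G*H]$ in the product is captured, after suitable direct extensions, by a function determined in $V[G]$ that by Lemma \ref{preplem} is dominated by some $f_\beta$.

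The substantive step is very-goodness. Let $\alpha<\mu^+$ with $\omega<\lambda:=\mathrm{cf}(\alpha)<\kappa$. The last bullet of Section \ref{priksec} provides in $V[G]$ a cofinal sequence $\langle\alpha_\xi:\xi<\lambda\rangle$ in $\alpha$, and we let $C\subseteq\alpha$ be its closure. For each $i<\omega$, the $\kappa$-completeness of $U^*_i$ together with $|C|=\lambda<\kappa$ ensures $B^i:=\bigcap_{\beta<\gamma\in C}B^i_{\beta,\gamma}\in U^*_i$, and therefore $A_{n+1}:=\{y\in X_{n+1}:(\forall i\leq n)\;y\cap\kappa^{+i}\in B^i\}\in U^*_{n+1}$. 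Prikry genericity gives $n_0$ with $x_{n+1}\in A_{n+1}$ for all $n\geq n_0$, so $C$ together with any $\eta$ above the finitely many coordinates $(n,i)$ with $n<n_0$ witnesses very-goodness of $\alpha$. The main obstacle is calibrating the definition of $f_\beta(n,i)$ so that the extended collapse $\pi^i_{x_{n+1}}$ actually yields strict inequality---distinct values of $g^i_\beta$ and $g^i_\gamma$ can collapse to the same ordinal if no element of $x_{n+1}$ separates them---which will likely require choosing the $g^i_\beta$'s more delicately than just as $j^*$-witnesses (for instance, arranging the modified value to land on a specific element of $x$-like sets). Once this is set up, the scale-axiom verifications above are essentially automatic.
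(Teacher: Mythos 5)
Your overall architecture matches the paper's: use the representing functions from Lemma \ref{preplem}, get $<^*$-increase and very-goodness from \L{}o\'{s} plus $\kappa$-completeness of the measures, and get cofinality from a Prikry-property/diagonal-intersection reduction of any $h$ in the product to a sequence of functions in $V[G]$. But there is a genuine gap in how you index the scale. You take $f_\beta$ to be built from $g^i_\beta$ for $\beta<\mu^+$, i.e.\ you use only the ordinals below $\mu^+$ as the targets of the representation $j^*(g^i_\beta)(\sup(j``\kappa^{+i}))=\beta$. The resulting sequence is $<^*$-increasing but \emph{bounded}, hence not cofinal: the $U^*_n$-class of $x\mapsto g^i_\beta(\sup(x\cap\kappa^{+i}))$ is exactly $\beta$, and the set $\{\beta \mid \beta<\mu^+\}$ is bounded in $j(\kappa^{+i})$ (indeed below $j(\kappa)$), whereas the functions $H_{n,i}\in V[G]$ produced by your Prikry-property step have $[H_{n,i}]_{U^*_n}$ ranging over arbitrary ordinals below $j(\kappa^{+i})$. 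So the final step ``dominated by some $f_\beta$'' fails; in fact your whole sequence lies $<^*$-below the single function built from $g^i_{\mu^+}$. The paper's fix is exactly the missing ingredient: since $\mathrm{cf}(j(\kappa^{+i}))=\mu^+$ (established in the proof of Lemma \ref{preplem}), one fixes for each $i$ an increasing continuous cofinal sequence $\langle \alpha^i_\zeta \mid \zeta<\mu^+\rangle$ in $j(\kappa^{+i})$ and sets $f_\zeta(n,i)=g^i_{\alpha^i_\zeta}(\sup(x_n\cap\kappa^{+i}))$; then the $\zeta$ found in the cofinality argument is one with $\alpha^i_\zeta>\sup_n[H_{n,i}]_{U^*_n}$ for every $i$.

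A secondary point: you compose with the transitive collapse $\pi^i_{x_{n+1}}$ to land in $\kappa_{n+1}^{+i}$ and correctly observe that this can destroy strict inequalities. You leave this unresolved (``will likely require choosing the $g^i_\beta$'s more delicately''). The paper avoids the issue entirely by first proving everything for the scale in $\prod_{n,i}\sup(x_{n+1}\cap\kappa^{+i})$ and only at the very end transferring to $\prod_{n,i}\kappa_{n+1}^{+i}$ via the standard fact that a product of ordinals is cofinally equivalent (preserving length and very good points) to the product of their cofinalities, here using $\mathrm{cf}(\sup(x_{n+1}\cap\kappa^{+i}))=\kappa_{n+1}^{+i}$. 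I'd recommend restructuring your argument the same way rather than trying to calibrate the collapse coordinatewise.
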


\begin{proof}
For each $n<\omega$, fix an increasing, continuous sequence of ordinals $\langle \alpha^n_\zeta \mid \zeta < \mu^+ \rangle$ cofinal in $j(\kappa^{+n})$. For all $\zeta<\mu^+$, $n<\omega$, and $i\leq n$, let $f_\zeta(n,i)=g^i_{\alpha^i_\zeta}(\sup(x_n \cap \kappa^{+i}))$.

\begin{claim}
Let $\zeta < \mu^+$. There is $n_\zeta < \omega$ such that for all $n\geq n_\zeta$ and all $i\leq n$, $f_\zeta(n,i) < \sup(x_{n+1}\cap \kappa^{+i})$.
\end{claim}

\begin{proof}
For $n<\omega$, let $A_{n+1} = \{x\in \mathcal{P}_\kappa(\kappa^{+n+1}) \mid$ for all $i\leq n$ and all $y\in \mathcal{P}_\kappa(\kappa^{+n})$ with $y\prec x$, $g^i_{\alpha^i_\zeta}(\sup(y \cap \kappa^{+i}))<\sup(x\cap \kappa^{+i})\}$. Now suppose that $\bar{y} \in \mathcal{P}_{j(\kappa)}(j(\kappa^{+n}))$ is such that $\bar{y} \prec j``\kappa^{+n+1}$. Then, since $j``\kappa^{+n+1} \cap j(\kappa) = \kappa$, $\mathrm{otp}(\bar{y})<\kappa$ and, since $\bar{y} \subseteq j``\kappa^{+n+1}$, if $y\in \mathcal{P}_\kappa(\kappa^{+n})$ is the inverse image of $\bar{y}$ under $j$, then $j^*(y)=\bar{y}$, so, for all $i\leq n$, $j^*(g^i_{\alpha^i_\zeta})(\sup(\bar{y} \cap j(\kappa^{+i}))) = j^*(g^i_{\alpha^i_\zeta}(\sup(y\cap \kappa^{+i}))) < \sup(j``\kappa^{+i})=\sup(j``\kappa^{+n+1}\cap j(\kappa^{+i}))$. Thus, $j``\kappa^{+n+1} \in j^*(A_{n+1})$, so $A_{n+1} \in U^*_{n+1}$. By genericity, there is $n_\zeta < \omega$ such that $x_{n+1}\in A_{n+1}$ for all $n\geq n_\zeta$. The claim follows.
\end{proof}

Thus, by adjusting each $f_\zeta$ on only finitely many coordinates, we may assume that, for all $\zeta < \mu^+$,
\[
f_\zeta \in \prod_{\substack{n<\omega\\ i\leq n}}\sup(x_{n+1}\cap \kappa^{+i}).
\]

\begin{claim}
For all $\zeta < \zeta' < \mu^+$, $f_\zeta <^* f_{\zeta'}$.
\end{claim}

\begin{proof}
Fix $\zeta < \zeta' < \mu^+$. For all $n<\omega$ and $i\leq n$, we have $\alpha^i_\zeta = j^*(g^i_{\alpha^i_\zeta})(\sup(j``\kappa^{+n} \cap j(\kappa^{+i}))) < j^*(g^i_{\alpha^i_{\zeta'}})(\sup(j``\kappa^{+n} \cap j(\kappa^{+i}))) = \alpha^i_{\zeta'}$. Thus, the set $B_n = \{x \in \mathcal{P}_\kappa(\kappa^{+n}) \mid$ for all $i\leq n$, $g^i_{\alpha^i_\zeta}(\sup(x\cap \kappa^{+i}))<g^i_{\alpha^i_{\zeta'}}(\sup(x\cap \kappa^{+i})) \}$ is in $U^*_n$. By genericity, $x_n \in B_n$ for large enough $n<\omega$, so, for large enough $n$, for all $i\leq n$, $f_\zeta(n,i) <f_{\zeta'}(n,i)$.
\end{proof}

\begin{lemma} \label{bdg1}
In $V[G*H]$, let 
\[
h \in \prod_{\substack{n<\omega\\ i\leq n}}\sup(x_{n+1}\cap \kappa^{+i}).
\]
Then there is $\langle H_{n,i} \mid n<\omega, \ i \leq n \rangle \in V[G]$ such that $H_{n,i}:\mathcal{P}_\kappa(\kappa^{+n})\rightarrow \kappa^{+i}$ and, for large enough $n$ and all $i\leq n$, $h(n,i) < H_{n,i}(x_n)$. 
\end{lemma}

\begin{proof}
Let $h$ be as in the statement of the lemma, and let $\dot{h}$ be a $\mathbb{Q}$-name for $h$. We may assume that, in fact, 
\[
h \in \prod_{\substack{n<\omega\\ i\leq n}}x_{n+1}\cap \kappa^{+i}
\]
by considering instead $h'$, where $h'(n,i)=\min(x_{n+1}\setminus h(n,i))$.

 We show that for every $q\in \mathbb{Q}$, there is $p \leq^* q$ forcing the desired conclusion. We assume for simplicity that $q$ is the trivial condition and that 
\[
q\Vdash ``\dot{h} \in \prod_{\substack{n<\omega\\ i\leq n}}\dot{x}_{n+1}\cap \kappa^{+i}".
\]
A tedious but straightforward adaptation of our proof gives the general case.

Work in $V[G]$. If $s$ is a lower part of length $n+2$ with maximum element $x^s_{n+1}$, then, for every $i\leq n$, $s^\frown \mathbbm{1} \Vdash ``\dot{h}(n,i) \in x^s_{n+1}"$. Since $|x^s_{n+1}|<\kappa$ and $(\mathbb{Q},\leq^*)$ is $\kappa$-closed, repeated application of the Prikry property yields an upper part $A_s$ and ordinals $\langle \alpha_{s,i} \mid i\leq n \rangle$ such that, for all $i\leq n$, $s^\frown A_s \Vdash ``\dot{h}(n,i) = \alpha_{s,i}"$. By taking a diagonal intersection, we obtain a condition $q' = \langle B_0, B_1, \ldots \rangle$ such that for every lower part $s$ of length $n+2$ compatible with $q'$ and every $i\leq n$, $s^\frown q' \Vdash ``\dot{h}(n,i) = \alpha_{s,i}"$. 

Now suppose $t$ is a lower part of length $n+1$ compatible with $q'$, and let $i\leq n$. Consider the regressive function with domain $B_{n+1}$ which takes $x$ and returns $\alpha_{t^\frown \langle x \rangle, i}$. By Fodor's lemma, this function is constant on a measure-one set $B_{t,i}$. Let $B_t = \bigcap_{i\leq n} B_{t,i}$. By taking the diagonal intersections of the $B_t$'s, we obtain a condition $p=\langle C_0, C_1, \ldots \rangle$ such that for every lower part $t$ of length $n+1$ compatible with $p$ and for every $i\leq n$, there is $\beta_{t,i}$ such that $t^\frown p \Vdash ``\dot{h}(n,i) = \beta_{t,i}"$.

Now, for $n<\omega$, $i\leq n$, and $x\in \mathcal{P}_\kappa(\kappa^{+n})$, let $H_{n,i}(x) = \sup(\{\beta_{t,i} + 1 \mid t$ is a lower part of length $n+1$ with top element $x\})$. Since there are fewer than $\kappa$-many such lower parts, it is clear that $H_{n,i}: \mathcal{P}_\kappa(\kappa^{+n}) \rightarrow \kappa^{+i}$ and, for all $n<\omega$ and $i\leq n$, $p \Vdash ``\dot{h}(n,i) < H_{n,i}(\dot{x}_n)"$.
\end{proof}

\begin{claim}
$\overrightarrow{f} = \langle f_\zeta \mid \zeta < \mu^+ \rangle$ is cofinal in
\[
\prod_{\substack{n<\omega\\ i\leq n}}\sup(x_{n+1}\cap \kappa^{+i}).
\]
\end{claim}

\begin{proof}
Let 
\[
h\in \prod_{\substack{n<\omega\\ i\leq n}}\sup(x_{n+1}\cap \kappa^{+i}).
\]
Find $\langle H_{n,i} \mid n<\omega, \ i\leq n \rangle \in V[G]$ as in the previous lemma. For each $n$ and $i$, $[H_{n,i}]_{U_n} < j(\kappa^{+i})$. For $i<\omega$, let $\alpha_i = \sup(\{[H_{n,i}]_{U_n}+1 \mid n \geq i \})$. For every $i<\omega$, $\alpha_i < j(\kappa^{+i})$. Find $\zeta < \mu^+$ such that, for all $i<\omega$, $\alpha_i < \alpha^i_\zeta$. For all $n<\omega$ and $i\leq n$, $[H_{n,i}]_{U_n} < \alpha^i_{\zeta} = j^*(g^i_{\alpha^i_\zeta})(\sup(j``\kappa^{+i}))$. Thus, the set of $x\in \mathcal{P}_\kappa(\kappa^{+n})$ such that $H_{n,i}(x) < g^i_{\alpha^i_\zeta}(\sup(x\cap \kappa^{+i}))$ is in $U_n$. By genericity, for large enough $n$ and all $i\leq n$, $h(n,i) < H_{n,i}(x_n) < g^i_{\alpha^i_\zeta}(\sup(x_n\cap \kappa^{+i})) = f_\zeta(n,i)$, so $\overrightarrow{f}$ is in fact cofinal in the desired product.
\end{proof}

\begin{claim}
Suppose $\alpha < \mu^+$ and $\omega < \mathrm{cf}(\alpha) < \kappa$ (in $V[G*H]$). Then $\alpha$ is very good for $\overrightarrow{f}$.
\end{claim}

\begin{proof}
Since $\omega < \mathrm{cf}(\alpha) < \kappa$ in $V[G*H]$, the same is true in $V[G]$, by the last line of Section \ref{priksec}. Let $C\in V[G]$ be a club in $\alpha$ with $\mathrm{otp}(C) = \mathrm{cf}(\alpha)$. Let $n<\omega$, $i\leq n$, and $\zeta < \zeta'$ with $\zeta, \zeta' \in C$. Then it is easy to see that $A_{n,i,\zeta, \zeta'} := \{x\in \mathcal{P}_\kappa(\kappa^{+n}) \mid g^i_{\alpha^i_\zeta}(\sup(x \cap \kappa^{+i})) < g^i_{\alpha^i_{\zeta'}}(\sup(x \cap \kappa^{+i})) \}$ is in $U^*_n$. Since $|C| < \kappa$ and $U^*_n$ is $\kappa$-complete, we get that 
\[
A_n = \bigcap_{\substack{i\leq n \\ \zeta<\zeta' \in C}} A_{n,i,\zeta,\zeta'} \in U^*_n.
\]
Thus, for large enough $n$, $x_n \in A_n$, so $C$ witnesses that $\alpha$ is very good.
\end{proof}
We now have a scale with all of the desired properties, except it lives in the wrong product. Notice, though, that for every $n<\omega$ and $i\leq n$, we have arranged that $\mathrm{cf}(\sup(x_{n+1}\cap \kappa^{+i})) = \kappa_{n+1}^{+i}$. Thus, through standard arguments, $\overrightarrow{f}$ collapses to a scale of the same length, with the same very good points, in
\[
\prod_{\substack{n<\omega\\ i\leq n}}\kappa_{n+1}^{+i}.
\]
\end{proof}

\begin{theorem} \label{badscale}
Suppose $\sigma \in {^\omega \omega}$ and, for all $n<\omega$, $\sigma(n) \geq n$. Then, in $V[G*H]$, there is a bad scale of length $\mu$ in 
\[
\prod_{n<\omega} \kappa_n^{+\sigma(n)}.
\]
\end{theorem}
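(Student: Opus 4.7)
My plan is to transfer a ground-model scale of length $\mu$ to the extension, in the spirit of the Cummings--Foreman construction from \cite{cummingsforeman}. Since $\sigma(n)\ge n$ guarantees that $\{\kappa^{+\sigma(n)}\mid n<\omega\}$ is cofinal in $\kappa^{+\omega}$, GCH in $V$ yields a scale $\langle h_\zeta\mid\zeta<\mu\rangle$ of length $\mu$ in $\prod_n\kappa^{+\sigma(n)}$. I will use the Prikry-generic sequence $\langle x_n\mid n<\omega\rangle$ together with the supercompact measures $U^*_k$ to project each $h_\zeta$ to a function $\bar h_\zeta\in\prod_n\kappa_n^{+\sigma(n)}$, verify that the resulting sequence is a scale, and then exhibit cofinally many bad points.

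The projection requires, for each $n$, an auxiliary set $s_n\subseteq\kappa^{+\sigma(n)}$ of order type $\kappa_n^{+\sigma(n)}$. When $\sigma(n)=n$ the natural choice $s_n=x_n$ succeeds, as in \cite{cummingsforeman}; in general one must work harder, since the obvious initial segment of $x_{\sigma(n)}$ of the desired order type coincides with the ordinal $\kappa_n^{+\sigma(n)}$ itself (because $x_{\sigma(n)}\cap\kappa=\kappa_{\sigma(n)}$) and is not ``spread out'' enough to separate the $h_\zeta(n)$. The right $s_n$ is a subset of $x_{\sigma(n)}$ of order type $\kappa_n^{+\sigma(n)}$ chosen so that $\sup s_n$ has cofinality $\kappa_n^{+\sigma(n)}$ in $V[G*H]$ and lies past $h_\zeta(n)$ for fixed $\zeta$ on a tail of $n$, arranged by a $U^*_{\sigma(n)}$-measure-one selection definable uniformly from the generic. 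One then sets $\bar h_\zeta(n)=\pi_n(\min(s_n\setminus h_\zeta(n)))$, with $\pi_n\colon s_n\to\kappa_n^{+\sigma(n)}$ the order-isomorphism and finitely many coordinate adjustments as in the proof of Theorem \ref{vgthm}. Increasingness of $\vec{\bar h}$ follows from $h_\zeta<^*h_{\zeta'}$ together with a measure-one argument that $s_n$ separates the relevant intervals, and cofinality follows from a Lemma \ref{bdg1}-style Prikry bounding procedure: bound any $h\in\prod_n\kappa_n^{+\sigma(n)}$ from $V[G*H]$ to a $V[G]$-function, then to a $V$-function that is dominated by some $h_\zeta$.

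The main obstacle is verifying that $\vec{\bar h}$ is bad. The plan is to show that every $\alpha<\mu$ with $\mathrm{cf}^V(\alpha)=\aleph_1$ is a bad point; since $\aleph_1$ is preserved by $\mathbb{P}*\mathbb{Q}$ and such $\alpha$ are cofinal in $\mu$, this suffices. Suppose toward contradiction that $C\subseteq\alpha$ unbounded and $\eta<\kappa$ witness goodness; by the last Prikry fact applied to $\nu=\aleph_1$, I may take $C\in V[G]$, and fixing $n_0$ with $\kappa_n^{+\sigma(n)}>\eta$ for $n\ge n_0$, goodness becomes the statement that for every such $n$ and every $\gamma<\gamma'$ in $C$, the interval $[h_\gamma(n),h_{\gamma'}(n))$ is separated by $s_n$. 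To refute this I will use a density argument against the measures $U^*_{\sigma(n)}$: for each $n\ge n_0$, the set of $y\in X_{\sigma(n)}$ whose corresponding $s_n$ fails separation for some pair in $C$ is $U^*_{\sigma(n)}$-large, and diagonalizing across $n$ one extracts a direct extension of any given condition in $\mathbb{Q}$ forcing the failure. The delicate point is orchestrating $\aleph_1$-many pairs against the $\kappa$-complete measures while respecting the structure of the chosen $s_n$; this is the technical heart of the proof, mirroring the badness argument in \cite{cummingsforeman}.
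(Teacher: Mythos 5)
The construction of the scale itself is workable in outline, though the paper does the projection differently: rather than choosing sets $s_n\subseteq x_{\sigma(n)}$ and composing with order-isomorphisms, it uses the fact that $\kappa^{+\sigma(n)}$ lies below the image of $\kappa$ under the ultrapower by $U^*_n$ to fix functions $F^n_\eta:\mathcal{P}_\kappa(\kappa^{+n})\to\kappa$ with $[F^n_\eta]_{U^*_n}=\eta$ and $F^n_\eta(x)<\kappa_x^{+\sigma(n)}$, and sets $f_\alpha(n)=F^n_{g_\alpha(n)}(x_n)$. This avoids all of your ``separation'' worries: increasingness and cofinality then follow from genericity plus the bounding argument of Lemma \ref{bdg2}, essentially as you describe. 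The genuine gap is in the badness argument. Your plan is to prove that \emph{every} $\alpha<\mu$ with $\mathrm{cf}(\alpha)=\aleph_1$ is bad, by a density argument against the measures. This is false, not merely unproven: by Shelah's theorem on the approachability ideal, $I[\kappa^{+\omega+1}]$ contains a stationary set of points of cofinality $\omega_1$, and approachable points are good for every scale, so the ground-model scale has stationarily many good points of cofinality $\omega_1$; moreover, goodness of such a point $\beta$, witnessed by an unbounded $C\subseteq\beta$ of size $\aleph_1$ in $V[G]$, transfers to the projected scale, because for each $m$ the $\aleph_1$-many sets $\{x\mid F^m_{g_\gamma(m)}(x)<F^m_{g_{\gamma'}(m)}(x)\}$ for $\gamma<\gamma'$ in $C$ all lie in the $\kappa$-complete measure $U^*_m$, so $x_m$ lands in their intersection for all large $m$. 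Hence the projected scale has stationarily many \emph{good} points of cofinality $\omega_1$, and no density argument can refute goodness at all of them. (Note also that ``cofinally many bad points,'' which you invoke at one point, would not suffice in any case; a scale is bad exactly when its set of bad points is stationary.)

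The missing idea is that badness must be imported from the ground model via supercompactness, which your proposal never uses. Since $\kappa$ remains supercompact in $V[G]$ and $\sigma(n)\ge n$, Fact \ref{scbadness} supplies a scale $\overrightarrow{g}$ in $\prod_{n<\omega}\kappa^{+\sigma(n)}$ with stationarily many bad points of a fixed cofinality $\delta^{+\omega+1}<\kappa$; an arbitrary GCH-diagonalization scale gives you no handle on badness. The paper then proves the contrapositive transfer: if $\alpha$ is good for the projected scale $\overrightarrow{f}$, then (after shrinking the witnessing set to one in $V[G]$, using the last Prikry fact as you do) a forcing-plus-ultrapower argument shows that $\langle g_\beta(m)\mid\beta\in A\rangle$ is increasing for all large $m$, i.e.\ $\alpha$ is good for $\overrightarrow{g}$. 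Since the stationary set of bad points of $\overrightarrow{g}$ survives the $\mu$-c.c.\ forcing $\mathbb{Q}$, it consists of bad points of $\overrightarrow{f}$, which is therefore a bad scale. You should restructure your third paragraph around this transfer rather than a direct refutation of goodness at all $\omega_1$-points.
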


\begin{proof}
Since, in $V[G]$, $\kappa$ is supercompact, there is a scale $\overrightarrow{g} = \langle g_\alpha \mid \alpha < \mu \rangle$ in 
\[
\prod_{n<\omega}\kappa^{+\sigma(n)}
\]
with stationarily many bad points of cofinality $<\kappa$ (see \cite{canonicalstructure} for a proof). We have arranged with our preparatory forcing that, for every $n<\omega$, $j^*_n(\kappa) = j^*(\kappa)$. For each $n< \omega$ and each $\eta < \kappa^{+\sigma(n)}$, let $F^n_\eta : \mathcal{P}_\kappa(\kappa^{+n}) \rightarrow \kappa$ be such that $[F^n_\eta]_{U^*_n} = \eta$. We may assume that, for all $x \in \mathcal{P}_\kappa(\kappa^{+n}), \ F^n_\eta(x) < \kappa_x^{+\sigma(n)}$. We now define $\langle f_\alpha \mid \alpha < \mu \rangle$ in 
\[
\prod_{n<\omega} \kappa_n^{+\sigma(n)}
\]
by letting $f_\alpha(n) = F^n_{g_\alpha(n)}(x_n)$.

\begin{claim}
If $\alpha < \alpha' < \kappa^{+\omega+1}$, then $f_\alpha <^* f_{\alpha'}$.
\end{claim}

\begin{proof}
Since $\overrightarrow{g}$ is a scale, there is $n^*$ such that, for all $n\geq n^*$, $g_\alpha(n) < g_{\alpha'}(n)$. Thus, for every $n \geq n^*$, $\{x \in \mathcal{P}_\kappa(\kappa^{+n}) \mid F^n_{g_\alpha(n)}(x) < F^n_{g_{\alpha'}(n)}(x)\} \in U^*_n$, so, by genericity, for large enough $n$, $f_\alpha(n)<f_{\alpha'}(n)$.
\end{proof}

\begin{lemma} \label{bdg2}
In $V[G*H]$, let
\[
h\in \prod_{n<\omega}\kappa_n^{+\sigma(n)}.
\]
Then there is $\langle H_n \mid n<\omega \rangle \in V[G]$ such that $\mathrm{dom}(H_n) = \mathcal{P}_\kappa(\kappa^{+n})$, $H_n(x) < \kappa_x^{+\sigma(n)}$ for all $x \in \mathcal{P}_\kappa(\kappa^{+n})$, and, for large enough $n$, $h(n)<H_n(x_n)$.
\end{lemma}

\begin{proof}
Let $h$ be as in the statement of the lemma, and let $\dot{h} \in V[G]$ be a $\mathbb{Q}$-name for $h$. Let $q\in \mathbb{Q}$. We show that there is $p \leq^* q$ forcing the desired conclusion. As in the proof of Lemma \ref{bdg1}, we assume that $q$ is the trivial condition and that
\[
q \Vdash ``\dot{h}\in \prod_{n<\omega}\dot{\kappa}_n^{+\sigma(n)}".
\]

Work in $V[G]$. If $s$ is a lower part of length $n+1$ with maximum element $x^s_n$, then $s^\frown \mathbbm{1} \Vdash ``\dot{h}(n)<\kappa_{x^s_n}^{+\sigma(n)} < \kappa"$. Thus, by the Prikry property and the $\kappa$-completeness of the measures, there is an upper part $A_s$ and an ordinal $\alpha_s < \kappa_{x^s_n}^{+\sigma(n)}$ such that $s^\frown A_s \Vdash ``\dot{h}(n) = \alpha_s"$. By taking a diagonal intersection, we obtain a condition $q' = \langle B_0, B_1, \ldots \rangle$ such that, for every lower part $s$ of length $n+1$ compatible with $q'$, $s^\frown q' \Vdash ``\dot{h}(n) = \alpha_s"$.

For $x \in \mathcal{P}_\kappa(\kappa^{+n})$, let $H_n(x) = \sup(\{\alpha_s \mid s$ is a lower part of length $n+1$ with top element $x \})$. Note that, if $m<n, y\in \mathcal{P}_\kappa(\kappa^{+m})$, and $y\prec x$, then $y\subseteq x \cap \kappa^{+m}$. Since $|x \cap \kappa^{+m}| = \kappa_x^{+m}$, there are fewer than $\kappa_x^{+n} \leq \kappa_x^{+\sigma(n)}$-many lower parts of length $n+1$ with top element $x$, so $H_n(x) < \kappa_x^{+\sigma(n)}$. Moreover, it is clear that, for every $n<\omega$, $q'\Vdash ``\dot{h}(n) < H_n(\dot{x}_n)"$.
\end{proof}

\begin{claim}
$\overrightarrow{f} = \langle f_\alpha \mid \alpha < \mu \rangle$ is cofinal in
\[
\prod_{n<\omega} \kappa_n^{+\sigma(n)}
\]
\end{claim}

\begin{proof}
Let
\[
h\in \prod_{n<\omega} \kappa_n^{+\sigma(n)}
\]
and let $\langle H_n \mid n<\omega \rangle \in V[G]$ be as given by the previous lemma. For each $n<\omega$, $[H_n]_{U^*_n} < \kappa^{+\sigma(n)}$, so we can find $\alpha < \mu$ and $n^*<\omega$ such that for all $n\geq n^*$, $[H_n]_{U^*_n} < g_\alpha(n)$. Then, for all $n \geq n^*$, $\{x \in \mathcal{P}_\kappa(\kappa^{+n}) \mid H_n(x) < F^n_{g_\alpha(n)}(x) \} \in U^*_n$. Thus, by genericity, for large enough $n$, $h(n) < H_n(x_n) < f_\alpha(n)$.
\end{proof}

\begin{claim}
If $\alpha < \mu$ is good for $\overrightarrow{f}$, then it is good for $\overrightarrow{g}$ as well.
\end{claim}

\begin{proof}
Let $\alpha$ be good for $\overrightarrow{f}$. $\omega < \mathrm{cf}(\alpha) < \kappa$, and this is true in $V[G]$ as well. Since every unbounded subset of $\alpha$ in $V[G*H]$ contains an unbounded subset in $V[G]$, we can choose $A\in V[G]$ unbounded in $\alpha$ and $n^* < \omega$ witnessing that $\alpha$ is good for $\overrightarrow{f}$. Moreover, we may assume that $\mathrm{otp}(A) = \mathrm{cf}(A)$. Let $q = \langle x_0, x_1, \ldots , x_{n-1}, A_n, A_{n+1}, \ldots \rangle$ force that $A$ and $n^*$ witness the goodness of $\alpha$. It must be the case that for every $m \geq n,n^*$, $\{x \in \mathcal{P}_\kappa(\kappa^{+m}) \mid \langle F^m_{g_\beta(m)}(x) \mid \beta \in A \rangle$ is strictly increasing$\} \in U^*_m$, since otherwise we could find $p \leq q$ forcing that $\langle f_\beta(m) \mid \beta \in A \rangle$ is not strictly increasing. Thus, for all $m \geq n, n^*$ and $\beta, \gamma \in A$ with $\beta < \gamma$, $g_\beta(m) = [F^m_{g_\beta(m)}]_{U^*_m} < [F^m_{g_\gamma(m)}]_{U^*_m} = g_\gamma(m)$. Thus, $A$ witnesses that $\alpha$ is good for $\overrightarrow{g}$.
\end{proof}
We know that, in $V[G]$, there is a stationary set of $\alpha < \mu$ with $\omega < \mathrm{\alpha} < \kappa$ such that $\alpha$ is bad for $\overrightarrow{g}$. Since $\mathbb{Q}$ has the $\mu$-c.c., this set remains stationary in $V[G*H]$. Thus, $\overrightarrow{f}$ is a bad scale in $V[G*H]$.
\end{proof}

In \cite{gitiksharon}, Gitik and Sharon show that, in $V[G*H]$, there is a very good scale in $\prod \kappa_n^{+\omega+1}$ of length $\mu$ and a scale in $\prod \kappa_n^{+\omega+2}$ of length $\mu^+$ such that every $\alpha < \mu^+$ with $\omega < \mathrm{cf}(\alpha) < \kappa$ is very good. We now show that, above this, there is no essentially new behavior.

\begin{theorem}
In $V[G]$, let $\sigma: \omega \rightarrow \kappa$ be such that, for all $n<\omega$, $\sigma(n) \geq \omega+1$. Then, in $V[G*H]$, there is a scale of length $\mu^+$ in $\prod \kappa_n^{+\sigma(n)+1}$ such that every $\alpha<\mu^+$ with $\omega < \mathrm{cf}(\alpha) < \kappa$ is very good.
\end{theorem}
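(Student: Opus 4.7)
The plan is to adapt the construction in Theorem \ref{vgthm} to the new product. Because the canonical functions $g^n_\beta$ from Lemma \ref{preplem} are only available for $\beta<j(\kappa^{+n})$ with $n<\omega$ finite, and the present setting requires accessing ordinals up to $j(\kappa^{+\sigma(n)+1})$ when $\sigma(n)\geq\omega+1$, I would work throughout with arbitrary functions in $V[G]$ representing elements of the $U^*_n$-ultrapower. Concretely, for each $n<\omega$, the first step is to fix an increasing continuous sequence $\langle\alpha^n_\zeta \mid \zeta<\mu^+\rangle$ of ordinals cofinal in the set
\[
S_n:=\{[H]_{U^*_n} \mid H\in V[G],\ H:\mathcal{P}_\kappa(\kappa^{+n})\to\kappa^{+\sigma(n)+1},\ H(x)<\kappa_x^{+\sigma(n)+1}\}.
\]
Then, for each $\zeta<\mu^+$, I would choose $F^n_\zeta\in V[G]$ representing $\alpha^n_\zeta$ with $F^n_\zeta(x)<\kappa_x^{+\sigma(n)+1}$ on a $U^*_n$-measure-one set, and set $f_\zeta(n):=F^n_\zeta(x_n)$. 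After finitely many adjustments each $f_\zeta$ lies in $\prod_n \kappa_n^{+\sigma(n)+1}$.

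The verifications that $\overrightarrow{f}=\langle f_\zeta\mid\zeta<\mu^+\rangle$ is $<^*$-increasing, is cofinal, and has every $\alpha<\mu^+$ with $\omega<\mathrm{cf}(\alpha)<\kappa$ as a very good point proceed in direct analogy with Theorem \ref{vgthm}. Increasingness follows since $\zeta<\zeta'$ gives $\alpha^n_\zeta<\alpha^n_{\zeta'}$, hence $\{x:F^n_\zeta(x)<F^n_{\zeta'}(x)\}\in U^*_n$, and $x_n$ is in this set for large $n$ by genericity. For cofinality, one first establishes a direct analogue of Lemma \ref{bdg2}, producing for any $h\in\prod_n\kappa_n^{+\sigma(n)+1}$ in $V[G*H]$ functions $H_n\in V[G]$ dominating $h(n)$ at $x_n$ with $H_n(x)<\kappa_x^{+\sigma(n)+1}$; then $[H_n]_{U^*_n}\in S_n$, so one picks $\zeta_n<\mu^+$ with $\alpha^n_{\zeta_n}>[H_n]_{U^*_n}$ and sets $\zeta=\sup_n\zeta_n<\mu^+$, obtaining $f_\zeta>^* h$. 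Very goodness at $\alpha$ follows by choosing a club $C\in V[G]$ in $\alpha$ with $\mathrm{otp}(C)<\kappa$ and invoking $\kappa$-completeness of each $U^*_n$ to intersect the fewer than $\kappa$ many measure-one sets $\{x:F^n_\zeta(x)<F^n_{\zeta'}(x)\}$ for $\zeta<\zeta'$ in $C$.

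The main obstacle, and the first thing to verify, is that $S_n$ really has $V[G]$-cofinality at most $\mu^+$, so that a length-$\mu^+$ cofinal sequence into it exists. This reduces to an analysis of the $V[G]$-cofinality of the ultrapower ordinal $[x\mapsto \kappa_x^{+\sigma(n)+1}]_{U^*_n}$, which for large $\sigma(n)\geq\omega+1$ is nontrivial since $\kappa^{+\sigma(n)+1}$ may strictly exceed $\mu^+$. The key inputs are the $\mu$-closure of $M[G*H*I]$ inside $V[G]$ and the factor map from the $U^*_n$-ultrapower into $M[G*H*I]$, together with the fact that the preparatory iteration $\mathbb{P}$ was tailored so that the relevant $M$-regular cardinals in the range of $j^*$ have $V[G]$-cofinality $\mu^+$. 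Once this cofinality bound is secured, the construction of $\overrightarrow{f}$ and the verification of its properties are a bookkeeping exercise patterned exactly on Theorem \ref{vgthm}.
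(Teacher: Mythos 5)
Your proposal is correct and is essentially the paper's argument: the paper likewise builds the scale from $\mu^+$-sequences $\langle\alpha^n_\zeta\mid\zeta<\mu^+\rangle$ cofinal in $\eta_n=(\kappa^{+\sigma(n)+1})^M=[x\mapsto\kappa_x^{+\sigma(n)+1}]_{U^*_n}$ (your $\sup S_n$), and it settles your ``main obstacle'' exactly as you indicate, by noting that $\eta_n<j(\kappa)$, $|j(\kappa)|=\mu^+$, and $M$ is closed under $\mu$-sequences of ordinals, so $\mathrm{cf}^{V[G]}(\eta_n)=\mu^+$. The only (cosmetic) difference is that, precisely because the relevant ultrapower ordinals all lie below $j(\kappa)$ rather than anywhere near $j(\kappa^{+\sigma(n)+1})$, the canonical functions of Lemma \ref{preplem} are in fact available --- the paper uses the $n=0$ functions $g_\alpha:\kappa\to\kappa$ with $j^*(g_\alpha)(\kappa)=\alpha$ for $\alpha<j(\kappa)$ as its representatives --- so your substitution of arbitrary representing functions is an unnecessary but harmless workaround.
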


\begin{proof}
First note that, by genericity, for large enough $n$, $\kappa_n^{+\sigma(n)+1} < \kappa_{n+1}$. Thus, we may assume without loss of generality that this is true for all $n<\omega$. Work in $V[G]$. For $n<\omega$, let $\eta_n = (\kappa^{+\sigma(n)+1})^M$. $\eta_n < j(\kappa)$ and $|j(\kappa)| = \mu^+$, so, since $M$ is closed under $\mu$-sequences of ordinals, $|\eta_n| = \mathrm{cf}(\eta_n) = \mu^+$. Let $\langle \alpha^n_\zeta \mid \zeta < \mu^+ \rangle$ be increasing, continuous, and cofinal in $\eta_n$.

Recall, letting $n=0$ in Lemma \ref{preplem}, that for all $\alpha < j(\kappa)$ (so certainly for all $\alpha < \eta_n$), there is $g_\alpha: \kappa \rightarrow \kappa$ such that $j^*(g_\alpha)(\kappa) = \alpha$. Now, moving to $V[G*H]$, define $\overrightarrow{f} = \langle f_\zeta \mid \zeta < \mu^+ \rangle$ by letting $f_\zeta(n) = g_{\alpha^n_\zeta}(\kappa_n)$. The proofs of the following claims are only minor modifications of the proofs of the analogous claims from Theorem \ref{vgthm} and are thus omitted.
\begin{claim}
For all $\zeta < \mu^+$, for all large enough $n<\omega$, $f_\zeta(n) < \kappa_n^{+\sigma(n)+1}$.
\end{claim}

\begin{claim}
In $V[G*H]$, let \[h \in \prod_{n<\omega} \kappa_n^{+\sigma(n)+1}.\] Then there is $\langle H_n \mid n<\omega \rangle \in V[G]$ such that $\mathrm{dom}(H_n) = \mathcal{P}_\kappa(\kappa^{+n})$, $H_n(x) < \kappa_x^{+\sigma(n)}$ for all $x \in \mathcal{P}_\kappa(\kappa^{+n})$, and, for large enough $n$, $h(n) < H_n(x_n)$.
\end{claim}

\begin{claim}
$\overrightarrow{f}$ is a scale in $\prod \kappa_n^{+\sigma(n)+1}$.
\end{claim}

\begin{claim}
If $\alpha < \mu^+$ and $\omega < \mathrm{cf}(\alpha) < \kappa$, then $\alpha$ is very good for $\overrightarrow{f}$.
\end{claim}
\end{proof}

We now take a step back momentarily to survey the landscape. Things become a bit clearer if, in $V[G*H]$, we force with $\mathrm{Coll}(\mu, \mu^+)$, producing a generic object $I$. Since this forcing is so highly closed, all relevant scales in $V[G*H]$ remain scales in $V[G*H*I]$, and the goodness or badness of points of uncountable cofinality is preserved. The only thing that is changed is that, in $V[G*H*I]$, all relevant scales have length $\mu = \kappa^+$. Moreover, we have a very detailed picture of which scales are good and which are bad. Let $\sigma \in V[G]$ with $\sigma: \omega \rightarrow \kappa$ and, for all $n<\omega$, either $\sigma(n) = 0$ or $\sigma(n)$ is a successor ordinal, and consider a scale $\overrightarrow{f}$ of length $\mu$ in $\prod \kappa_n^{+\sigma(n)}$. First consider the case $\sigma: \omega \rightarrow \omega$. If, for large enough $n$, $\sigma(n)<n$, then $\overrightarrow{f}$ is a good scale, and in fact there is a very good scale in the same product. On the other hand, if $\sigma(n) \geq n$ for infinitely many $n$, then $\overrightarrow{f}$ is bad. Thus, the diagonal sequence $\langle \kappa_n^{+n} \mid n<\omega \rangle$ is a dividing line between goodness and badness in the finite successors of the $\kappa_n$'s. If, alternatively, $\sigma(n) > \omega$ for all sufficiently large $n$, then $\overrightarrow{f}$ is once again a good scale, and there is a very good scale in the same product.

\section{Very Weak Square in the Gitik-Sharon Model}

We take a brief moment to note that, though $AP_\kappa$ necessarily fails in the forcing extension by $\mathbb{Q}$, the weaker Very Weak Square principle may hold. We first recall the following definition from \cite{foremanmagidor}.

\begin{definition}
Let $\lambda$ be a singular cardinal. A {\it Very Weak Square sequence} at $\lambda$ is a sequence $\langle C_\alpha \mid \alpha < \lambda^+ \rangle$ such that, for a club of $\alpha < \lambda^+$, 
\begin{itemize}
\item{$C_\alpha$ is an unbounded subset of $\alpha$.}
\item{For all bounded $x \in [C_\alpha]^{<\omega_1}$, there is $\beta < \alpha$ such that $x = C_\beta$.}
\end{itemize}
\end{definition}
Note that we may assume in the above definition that, for the relevant club of $\alpha < \lambda^+$, $\mathrm{otp}(C_\alpha) = \mathrm{cf}(\alpha)$.

The existence of a Very Weak Square sequence at $\lambda$ follows from $AP_\lambda$, but the converse is not true. In fact, in \cite{foremanmagidor}, Foreman and Magidor prove that the existence of a Very Weak Square sequence at every singular cardinal is consistent with the existence of a supercompact cardinal. Also, note that a Very Weak Square sequence at $\lambda$ is preserved by any countably-closed forcing which also preserves $\lambda$ and $\lambda^+$. In particular, our preparation forcing $\mathbb{P}$ preserves Very Weak Square sequences. Thus, we may assume that, prior to forcing with $\mathbb{Q}$, there is a Very Weak Square sequence at $\kappa^{+\omega}$. 

Let $V$ denote the model over which we will force with $\mathbb{Q}$, and suppose that $\overrightarrow{C} = \langle C_\alpha \mid \alpha < \mu \rangle$ is a Very Weak Square sequence in $V$, where $\mu = \kappa^{+\omega+1}$. Assume additionally that there is a club $E \subseteq \mu$ such that for all $\alpha \in E$, $\mathrm{otp}(C_\alpha) = \mathrm{cf}(\alpha)$ and for all bounded $x \in [C_\alpha]^{<\omega_1}$, there is $\beta < \alpha$ such that $x = C_\beta$.  Let $G$ by $\mathbb{Q}$-generic over $V$. In $V[G]$, form $\overrightarrow{D} = \langle D_\alpha \mid \alpha < \mu \rangle$ as follows.
\begin{itemize}
\item{If $\alpha \not\in E$ or $\alpha \in E$ and $\mathrm{cf}^V(\alpha) < \kappa$, let $D_\alpha = C_\alpha$.}
\item{If $\alpha \in E$ and $\mathrm{cf}^V(\alpha) \geq \kappa$, let $D_\alpha \subseteq C_\alpha$ be an $\omega$-sequence cofinal in $\alpha$.}
\end{itemize}

Now, using the fact that forcing with $\mathbb{Q}$ does not add any bounded sequences of $\kappa$, it is easy to verify that $\overrightarrow{D}$ is a Very Weak Square sequence at $\kappa$.

\section{Classifying Bad Points}

We now turn our attention to Cummings and Foreman's second question from \cite{cummingsforeman}. We first recall some relevant definitions and the Trichotomy Theorem, due to Shelah \cite{shelah}.

\begin{definition}
Let $X$ be a set, let $I$ be an ideal on $X$, and let $f,g: X \rightarrow ON$. Then $f<_I g$ if $\{x\in X \mid g(x) \leq f(x) \} \in I$. $\leq_I$, $=_I$, $>_I$, and $\geq_I$ are defined analogously. If $D$ is the dual filter to $I$, then $<_D$ is the same as $<_I$.
\end{definition}

Thus, if $X$ is a set of ordinals and $I$ is the ideal of bounded subsets of $X$, then $<_I$ is the same as $<^*$.

\begin{definition}
Let $I$ be an ideal on $X$, $\beta$ an ordinal, and $\overrightarrow{f} = \langle f_\alpha \mid \alpha < \beta \rangle$ a $<_I$-increasing sequence of functions in ${^X}ON$. $g\in {^X}ON$ is an {\it exact upper bound} (or {\it eub}) for $\overrightarrow{f}$ if the following hold:
\begin{enumerate}
\item{For all $\alpha < \beta$, $f_\alpha <_I g$.}
\item{For all $h\in {^X}ON$ such that $h <_I g$, there is $\alpha < \beta$ such that $h <_I f_\alpha$.}
\end{enumerate}
\end{definition}

We note that, easily, if $\overrightarrow{f}$ is a $<_I$-increasing sequence of functions and $g$ and $h$ are both eubs for $\overrightarrow{f}$, then $g=_Ih$. The following is a standard alternate characterization of good points in scales.

\begin{proposition} \label{goodprop}
Let $\kappa$ be singular, let $A \subseteq \kappa$ be a cofinal set of regular cardinals of order type $\mathrm{cf}(\kappa)$, and let $\overrightarrow{f} = \langle f_\alpha \mid \alpha < \mu \rangle$ be a scale in $\prod A$. Let $\beta < \mu$ be such that $\mathrm{cf}(\kappa) < \mathrm{cf}(\beta) < \kappa$. Then the following are equivalent:
\begin{enumerate}
\item{$\beta$ is good for $\overrightarrow{f}$.}
\item{$\langle f_\alpha \mid \alpha < \beta \rangle$ has an eub, $g$, such that, for all $i\in A$, $\mathrm{cf}(g(i)) = \mathrm{cf}(\beta)$.}
\item{There is a $<$-increasing sequence of functions $\langle h_\xi \mid \xi < \mathrm{cf}(\beta) \rangle$ that is cofinally interleaved with $\overrightarrow{f}\restriction \beta$, i.e. for every $\alpha < \beta$ there is $\xi < \mathrm{cf}(\beta)$ such that $f_\alpha <^* h_\xi$ and, for every $\xi < \mathrm{cf}(\beta)$, there is $\alpha < \beta$ such that $h_\xi <^* f_\alpha$. In this case, the function $i\mapsto \sup(\{h_\xi(i) \mid \xi < \mathrm{cf}(\beta)\})$ is an eub for $\overrightarrow{f}\restriction \beta$.}
\end{enumerate}
\end{proposition}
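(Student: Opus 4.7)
The plan is to prove the cycle $(1) \Rightarrow (3) \Rightarrow (2) \Rightarrow (1)$. Write $\sigma := \mathrm{cf}(\beta)$, and note the inequalities $|A| = \mathrm{cf}(\kappa) < \sigma < \kappa$, which will be used repeatedly. For $(1)\Rightarrow(3)$, given a witness $C, \eta$ to the goodness of $\beta$, I would enumerate a cofinal subset $\langle \gamma_\xi \mid \xi < \sigma \rangle$ of $C$ of order type $\sigma$ and define $h_\xi(i) := f_{\gamma_\xi}(i)$ on $A \setminus \eta$, adjusting on the bounded initial segment so the resulting sequence is pointwise increasing; cofinal interleaving follows from $C$ being cofinal in $\beta$.

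For $(3)\Rightarrow(2)$, set $g(i) := \sup_{\xi < \sigma} h_\xi(i)$. Since $\sigma < i$ for all but boundedly many $i \in A$, we have $g(i) < i$, so $g \in \prod A$ after adjustment on small coordinates, and $\mathrm{cf}(g(i)) = \sigma$ at large $i$ follows from the strictly increasing cofinal sequence $\langle h_\xi(i) \rangle$. That $f_\alpha <^* g$ for all $\alpha < \beta$ is immediate from interleaving. For exactness: if $h <^* g$, then for each $i$ in the eventual support choose $\xi_i < \sigma$ with $h(i) < h_{\xi_i}(i)$; because $|A| < \sigma$, $\xi^* := \sup_i \xi_i < \sigma$, so $h <^* h_{\xi^*}$, and interleaving supplies $\alpha < \beta$ with $h_{\xi^*} <^* f_\alpha$.

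The heart of the argument is $(2)\Rightarrow(1)$. Fix continuous, strictly increasing cofinal sequences $\langle \gamma^i_\xi \mid \xi < \sigma \rangle$ in each $g(i)$ and set $h_\xi(i) := \gamma^i_\xi$; each $h_\xi <^* g$, so by the eub property I can inductively pick strictly increasing $\alpha_\xi < \beta$ with $h_\xi <^* f_{\alpha_\xi}$. To show $\{\alpha_\xi\}$ is cofinal in $\beta$, suppose it were bounded by some $\beta^* < \beta$; then $f_{\beta^*} <^* g$, and for each $i$ where this holds, pick $\zeta^i$ with $\gamma^i_{\zeta^i} > f_{\beta^*}(i)$; since $|A| < \sigma$, $\zeta^* := \sup_i \zeta^i < \sigma$, so $h_{\zeta^*}(i) > f_{\beta^*}(i)$ on a final segment, contradicting $h_{\zeta^*} <^* f_{\alpha_{\zeta^*}} \leq^* f_{\beta^*}$. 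For the uniform threshold, let $\eta_\xi < \kappa$ simultaneously witness $h_\xi(i) < f_{\alpha_\xi}(i) < g(i)$ for $i \in A \setminus \eta_\xi$; since $\mathrm{cf}(\kappa) < \sigma$ and $\sigma$ is regular, pigeonhole produces a fixed $\eta^* < \kappa$ with $E := \{\xi < \sigma : \eta_\xi \leq \eta^*\}$ of size $\sigma$. For $\xi \in E$, define $\rho(\xi) < \sigma$ as the least $\zeta$ with $f_{\alpha_\xi}(i) < \gamma^i_\zeta$ for all $i \in A \cap [\eta^*, \kappa)$, existing by the sup-over-$A$ argument. Extracting $D \subseteq E$ of size $\sigma$ closed under $\rho$, for $\xi < \xi'$ in $D$ and $i \geq \eta^*$ one has
\[
f_{\alpha_\xi}(i) < \gamma^i_{\rho(\xi)} \leq \gamma^i_{\xi'} = h_{\xi'}(i) < f_{\alpha_{\xi'}}(i),
\]
so $C := \{\alpha_\xi : \xi \in D\}$ together with $\eta := \eta^*$ witnesses that $\beta$ is good.

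The main obstacle is the uniform-threshold extraction in the last implication: the pointwise thresholds $\eta_\xi$ can a priori be unbounded in $\kappa$, and one needs both inequalities $|A| < \sigma$ (to bound each $\rho(\xi)$ inside $\sigma$) and $\mathrm{cf}(\kappa) < \sigma$ (to pigeonhole the $\eta_\xi$'s below a single $\eta^*$) in order to collapse them to a common threshold along an unbounded subset of $\beta$.
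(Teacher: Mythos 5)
Your proof is correct, and it is the standard argument for this standard fact, which the paper states without proof; the two cardinality hypotheses $|A| = \mathrm{cf}(\kappa) < \mathrm{cf}(\beta)$ (to take suprema over $A$ inside $\mathrm{cf}(\beta)$) and $\mathrm{cf}(\kappa) < \mathrm{cf}(\beta)$ (to pigeonhole the thresholds $\eta_\xi$ below a single $\eta^*$) are invoked exactly where they are needed. The only cosmetic point is that in $(3)\Rightarrow(2)$ the remark that $g(i) < i$ is neither needed nor guaranteed by the hypotheses of $(3)$ as stated, since an eub need not lie in $\prod A$; this does not affect the argument.
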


\begin{theorem} (Trichotomy)
Suppose $I$ is an ideal on $X$, $|X|^+ < \lambda = \mathrm{cf}(\lambda)$, and $\langle f_\alpha \mid \alpha < \lambda \rangle$ is a $<_I$-increasing sequence of functions in ${^X}ON$. Then one of the following holds:
\begin{enumerate}
\item{(Good) $\langle f_\alpha \mid \alpha < \lambda \rangle$ has an eub, $g$, such that, for all $x\in X$, $\mathrm{cf}(g(x)) > |X|$.}
\item{(Bad) There is an ultrafilter $U$ on $X$ extending the dual filter to $I$ and a sequence $\langle S_x \mid x\in X \rangle$ such that $|S_x| \leq |X|$ for all $x\in X$ and, for all $\alpha < \lambda$, there are $h\in \prod_{x\in X} S_x$ and $\beta < \lambda$ such that $f_\alpha <_U h <_U f_\beta$.}
\item{(Ugly) There is a function $h \in {^X}ON$ such that the sequence of sets $\langle \{x \mid f_\alpha(x) < h(x) \} \mid \alpha < \lambda \rangle$ does not stabilize modulo $I$.}
\end{enumerate}
\end{theorem}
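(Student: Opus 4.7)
My strategy is to argue by contrapositive: assume the sequence is not Ugly, and derive that either Good or Bad holds. The main reduction is to first establish that non-ugliness implies the existence of an exact upper bound $g$ for $\langle f_\alpha \mid \alpha < \lambda \rangle$, and then to perform a case analysis based on the pointwise cofinalities of $g$.

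First I would show that a non-Ugly sequence admits an eub. Call $h \in {^X}ON$ a \emph{bound} if $f_\alpha <_I h$ for every $\alpha < \lambda$; an eub is precisely a $\leq_I$-minimal bound. Starting from some initial bound (e.g.\ the pointwise supremum $x \mapsto \sup_\alpha f_\alpha(x)$, capped if necessary), I would iteratively replace the current bound $h_\xi$ by a strictly $<_I$-smaller bound whenever one exists, taking pointwise infima at limits. Non-ugliness enters at limit stages to guarantee that the infimum remains a bound for the scale: if it failed to, the sets $\{x : f_\alpha(x) < h_\xi(x)\}$ would not stabilize modulo $I$ as $\alpha$ varies, witnessing ugliness. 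A standard counting argument rules out strictly $<_I$-decreasing sequences of ordinal-valued functions on $X$ of length $|X|^+$, so the iteration terminates in fewer than $|X|^+ < \lambda$ steps at an eub $g$.

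With $g$ in hand, let $Y = \{x \in X : \mathrm{cf}(g(x)) \leq |X|\}$. If $Y \in I$, I modify $g$ on $Y$ so that $\mathrm{cf}(g(x)) > |X|$ everywhere; since the change is $I$-null, the modified $g$ is still an eub, and we are in case Good. Otherwise $Y$ is $I$-positive. For each $x \in Y$ fix a cofinal $S_x \subseteq g(x)$ with $|S_x| \leq |X|$, and set $S_x = \{0\}$ for $x \notin Y$. Two consequences of $g$ being an eub, combined with cofinality of $S_x$ in $g(x)$, drive the rest of the argument: (a) every $h \in \prod_x S_x$ satisfies $h <_I g$ and hence $h <_I f_\beta$ for some $\beta < \lambda$; (b) for every $\alpha < \lambda$, since $f_\alpha <_I g$ there is some $h \in \prod_x S_x$ with $\{x \in Y : f_\alpha(x) < h(x)\}$ $I$-positive.

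The final and most delicate step, which I expect to be the main obstacle, is to extract a single ultrafilter $U$ extending the dual filter to $I$ that simultaneously witnesses the interleaving $f_\alpha <_U h <_U f_\beta$ for every $\alpha < \lambda$, with the intermediate $h$ drawn from $\prod_x S_x$. I would build $U$ by a Zorn/filter-extension argument: processing the $\alpha < \lambda$ one at a time, at stage $\alpha$ I would enforce membership in the filter of a set of the form $\{x : f_\alpha(x) < h_\alpha(x)\}$ for a well-chosen $h_\alpha \in \prod_x S_x$ provided by (b), verifying compatibility with the decisions already made. The bookkeeping is feasible because $\lambda = \mathrm{cf}(\lambda)$ and $|S_x| \leq |X|$, and extending the resulting filter to an ultrafilter yields, together with $\langle S_x \mid x \in X \rangle$, the Bad witness.
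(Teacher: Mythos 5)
First, a point of orientation: the paper does not prove this theorem at all --- it is quoted as a known result of Shelah (with a citation) and then used as a black box, so there is no proof in the paper to compare yours against. Judged on its own terms, your proposal has a genuine gap at its foundation, namely the claim that non-Ugliness alone implies the existence of an exact upper bound.

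The specific failure is at the limit stages of your iteration. If $h_\delta$ is the pointwise infimum of the earlier bounds and fails to be a bound, what you get is that the sets $B_\alpha = \{x : f_\alpha(x) < h_\delta(x)\}$ have $I$-positive complement for all large $\alpha$. That does \emph{not} witness Ugliness: the $B_\alpha$ are $\subseteq_I$-decreasing and can perfectly well stabilize modulo $I$ at a set $B$ with $X \setminus B \notin I$ --- stabilization does not mean stabilization at $X$. Since $I$ need not be $|X|^+$-complete, the union over $\zeta < \delta$ of the $I$-small exception sets $\{x : f_\alpha(x) \geq h_\zeta(x)\}$ can be $I$-positive, so the infimum really can fail to bound the sequence without any ugliness appearing. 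This is not a repairable technicality but a structural problem: the Bad alternative in the Trichotomy Theorem must absorb not only sequences possessing an eub of small cofinality (which your Step 2 handles correctly, and more easily than you fear --- once $Y = \{x : \mathrm{cf}(g(x)) \leq |X|\}$ is $I$-positive, \emph{any} ultrafilter extending the dual filter and containing $Y$ witnesses the interleaving, with no Zorn-style bookkeeping over the $\alpha$'s), but also non-Ugly sequences with \emph{no} eub at all, and your argument never addresses these. In the standard proof, the witnesses $\langle S_x \mid x \in X\rangle$ for Badness are extracted precisely from a \emph{failed} attempt of length $|X|^+$ to build an eub: the approximants are arranged to be pointwise non-increasing, so that at each $x$ they take only boundedly many values, and those value sets become the $S_x$. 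Relatedly, your appeal to a ``standard counting argument'' ruling out $<_I$-decreasing sequences of length $|X|^+$ is not standard for an arbitrary ideal (transfinite $<_I$-decreasing sequences do exist; e.g.\ $h_n(k) = \max(k-n,0)$ is $<^*$-decreasing of length $\omega+1$ on $\omega$ modulo finite sets), and making the termination argument work is exactly where the pointwise-minimum device and the Bad alternative enter. To fix the proof you would need to run the eub construction so that its three possible outcomes --- success, non-stabilization of the associated sets, and running for all $|X|^+$ stages --- yield Good, Ugly, and Bad respectively, rather than front-loading eub existence onto non-Ugliness alone.
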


We note that the above terminology is slightly misleading. For $\beta$ to be a good point in a scale, for example, requires more than $\langle f_\alpha \mid \alpha < \beta \rangle$ falling into the Good case of the Trichotomy Theorem. It also requires that the eub have uniform cofinality equal to $\mathrm{cf}(\beta)$. Also, $\beta$ being a bad point in a scale does not imply that $\langle f_\alpha \mid \alpha < \beta \rangle$ falls into the Bad case of the Trichotomy Theorem.

We now answer Cummings and Foreman's question asking into which case of the Trichotomy Theorem the bad points in the Gitik-Sharon model fall. We also answer the analogous question for some other models in which bad scales exist, showing that, in the standard models in which bad scales exist at relatively small cardinals, there is considerable diversity of behavior at the bad points. We first recall the following fact from \cite{canonicalstructure}.

\begin{fact} \label{scbadness}
Suppose $\kappa$ is a supercompact cardinal, $\sigma \in {^\omega}\omega$ is a function such that, for all $n<\omega$, $\sigma(n) \geq n$, and $\langle f_\alpha \mid \alpha < \kappa^{+\omega+1} \rangle$ is a scale in $\prod_{n<\omega} \kappa^{+\sigma(n)}$. Then there is an inaccessible cardinal $\delta < \kappa$ such that, for stationarily many $\beta \in \kappa^{+\omega+1} \cap \mathrm{cof}(\delta^{+\omega+1})$, $\langle f_\alpha \mid \alpha < \beta \rangle$ has an eub, $g$, such that, for all $n<\omega$, $\mathrm{cf}(g(n)) = \delta^{+\sigma(n)}$.
\end{fact}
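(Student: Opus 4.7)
The plan is to use the supercompactness of $\kappa$ together with a reflection argument and Fodor's lemma. Let $U$ be a normal fine measure on $\mathcal{P}_\kappa(\mu)$, where $\mu = \kappa^{+\omega+1}$, and let $j : V \to M \cong \mathrm{Ult}(V, U)$; since $M$ is $\mu$-closed, $j``\mu$ and each $j``\kappa^{+\sigma(n)}$ lie in $M$ with their natural order types. Write $\beta^* = \sup j``\mu$ and, for each $n<\omega$, $g(n) = \sup j``\kappa^{+\sigma(n)}$. Because $\mu$ and each $\kappa^{+\sigma(n)}$ are regular, $\mathrm{cf}^M(\beta^*) = \mu = \kappa^{+\omega+1}$ and $\mathrm{cf}^M(g(n)) = \kappa^{+\sigma(n)}$.

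The heart of the argument is to verify that $g$ is an exact upper bound, in $M$, for $\langle j(\overrightarrow{f})_\alpha \mid \alpha < \beta^* \rangle$. The upper-bound inequality is immediate, since $f_\alpha(n) < \kappa^{+\sigma(n)}$ yields $j(f_\alpha)(n) = j(f_\alpha(n)) < g(n)$ pointwise. For exactness, suppose $h \in M$ satisfies $h <^* g$, and fix $H \in V$ with $h = [H]_U$. The sequence $\xi$ defined in $V$ by $\xi(n) = \min\{\alpha < \kappa^{+\sigma(n)} \mid \{x \mid H(x)(n) < \alpha\} \in U\}$ lies in $\prod_n \kappa^{+\sigma(n)}$, and its defining property is precisely the ultrapower characterization of $j(\xi(n)) > h(n)$, so $j(\xi(n)) > h(n)$ for all sufficiently large $n$. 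Since $\overrightarrow{f}$ is cofinal in $\prod_n \kappa^{+\sigma(n)}$ in $V$, there is $\eta < \mu$ with $\xi <^* f_\eta$, whence $h <^* j(f_\eta) = j(\overrightarrow{f})_{j(\eta)}$ with $j(\eta) < \beta^*$.

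With the eub verified, reflection finishes the argument. Given any club $C \subseteq \mu$ in $V$, $j(C)$ is a club in $j(\mu)$ containing $\beta^*$. Thus, witnessed by $\delta = \kappa$, $\beta = \beta^*$, and the eub $g$ constructed above, $M$ models the statement that there exist an inaccessible $\delta < j(\kappa)$ and $\beta \in j(C)$ with $\mathrm{cf}(\beta) = \delta^{+\omega+1}$ such that $j(\overrightarrow{f}) \restriction \beta$ admits an eub $g'$ with $\mathrm{cf}(g'(n)) = \delta^{+\sigma(n)}$ for every $n$. By elementarity, the analogous statement holds in $V$ for $C$, so the set $T$ of $\beta < \mu$ for which some inaccessible $\delta_\beta < \kappa$ witnesses the conclusion of the Fact meets every club and is therefore stationary. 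Since $\beta \mapsto \delta_\beta$ is regressive on $T$, Fodor's lemma produces a stationary $T' \subseteq T$ on which $\delta_\beta$ is constant, and this constant value of $\delta$ is as required. The main technical obstacle is the exactness half of the eub claim: a priori $M$ could contain functions $h$ below $g$ not dominated by any $j(f_\eta)$ with $\eta < \mu$, but the minimal $V$-ordinal witnesses $\xi(n)$ are definable directly from $U$ and $H$, which is what lets the cofinality of $\overrightarrow{f}$ in $V$ bound $h$ through $j$.
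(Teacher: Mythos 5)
Your proof is correct and is essentially the standard argument from \cite{canonicalstructure}, which the paper cites for this Fact rather than proving it: reflect the non-uniform eub $g(n)=\sup j``\kappa^{+\sigma(n)}$ at $\beta^*=\sup j``\mu$ down from the supercompactness ultrapower (using that $\mu$-closure of $M$ makes $M$ compute $\kappa^{+\sigma(n)}$ and $\kappa^{+\omega+1}$ correctly), and then stabilize $\delta$ by Fodor. The only step worth spelling out is that the upper-bound half of the eub claim must hold for all $\alpha<\beta^*$ and not just for $\alpha\in j``\mu$; this follows immediately since $j``\mu$ is cofinal in $\beta^*$ and $j(\overrightarrow{f})$ is $<^*$-increasing.
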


The content of the next theorem is that these eubs of non-uniform cofinality get transferred down to the bad scales defined in extensions by diagonal supercompact Prikry forcing. Note that the proof of the existence of bad scales in Theorem \ref{badscale} did not rely on our preparatory forcing $\mathbb{P}$, so we dispense with it here.

\begin{theorem}
Let $\kappa$ be supercompact, let $\mu = \kappa^{+\omega+1}$, and let $\sigma \in {^\omega}\omega$ be such that, for all $n<\omega$, $\sigma(n) \geq n$. Let $\mathbb{Q}$ be diagonal supercompact Prikry forcing at $\kappa$ defined from $\langle U_n \mid n<\omega \rangle$, where $U_n$ is a measure on $\mathcal{P}_\kappa(\kappa^{+n})$. In $V^\mathbb{Q}$, let $\overrightarrow{f}$ be the bad scale in $\prod_{n<\omega} \kappa_n^{+\sigma(n)}$ defined as in Theorem \ref{badscale}. Then there is an inaccessible cardinal $\delta < \kappa$ such that, for stationarily many $\beta \in \mu \cap \mathrm{cof}(\delta^{+\omega+1})$, $\langle f_\alpha \mid \alpha < \beta \rangle$ has an eub, $g$, such that, for all $n<\omega$, $\mathrm{cf}(g(n)) = \delta^{+\sigma(n)}$.
\end{theorem}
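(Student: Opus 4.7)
The plan is to push forward the eub structure of $\overrightarrow{g}$ in $V$ given by Fact \ref{scbadness} through the representing functions $F^n_\eta$ to produce eubs for $\overrightarrow{f}$ in $V[G]$. Applying Fact \ref{scbadness} in $V$, obtain an inaccessible $\delta < \kappa$ and a stationary $S \subseteq \mu \cap \mathrm{cof}^V(\delta^{+\omega+1})$ such that each $\beta \in S$ has an eub $h_\beta \in V$ for $\langle g_\alpha \mid \alpha < \beta \rangle$ with $\mathrm{cf}^V(h_\beta(n)) = \delta^{+\sigma(n)}$. Fix, for each $\beta \in S$ and each $n$, an increasing $\langle \eta^n_\xi \mid \xi < \delta^{+\sigma(n)} \rangle \in V$ cofinal in $h_\beta(n)$, and set $g^*_\beta(n) = \sup_{\xi < \delta^{+\sigma(n)}} F^n_{\eta^n_\xi}(x_n)$ in $V[G]$. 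For $n$ large enough that $\delta < \kappa_n$, this is a supremum of fewer than $\kappa_n$ ordinals below the regular cardinal $\kappa_n^{+\sigma(n)}$, so $g^*_\beta(n) < \kappa_n^{+\sigma(n)}$. Arguments parallel to those in Theorems \ref{vgthm} and \ref{badscale}, combining $\kappa$-completeness of the $U_n$'s with genericity, establish $f_\alpha <^* g^*_\beta$ for every $\alpha < \beta$ and $\mathrm{cf}^{V[G]}(g^*_\beta(n)) = \delta^{+\sigma(n)}$ for large $n$ (since the sequence $\xi \mapsto F^n_{\eta^n_\xi}(x_n)$ is eventually strictly increasing).

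The hard part is verifying that $g^*_\beta$ is genuinely an eub. Given $h \in V[G]$ with $h <^* g^*_\beta$, define, for each large $n$, $\xi^*_n = \min\{\xi < \delta^{+\sigma(n)} : h(n) < F^n_{\eta^n_\xi}(x_n)\}$. The key observation is that the sequence $\langle \xi^*_n \mid n < \omega \rangle$ automatically belongs to $V$: its range lies in the ordinal $\delta^{+\omega} < \kappa$, so after coding it is a bounded subset of $\kappa$, and the Prikry property of $\mathbb{Q}$ says $\mathbb{Q}$ adds no such new subsets. Consequently $k = \langle \eta^n_{\xi^*_n} \mid n < \omega \rangle$ is a $V$-function with $k <^* h_\beta$, and the eub property of $h_\beta$ in $V$ yields $\alpha < \beta$ with $k <^* g_\alpha$. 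Then $[F^n_{\eta^n_{\xi^*_n}}]_{U_n} < [F^n_{g_\alpha(n)}]_{U_n}$ puts $\{x : F^n_{\eta^n_{\xi^*_n}}(x) < F^n_{g_\alpha(n)}(x)\}$ in $U_n$ for large $n$, so by genericity $h(n) < F^n_{\eta^n_{\xi^*_n}}(x_n) < f_\alpha(n)$ for large $n$, as required. This bounded-subset trick sidesteps what would otherwise be a delicate estimate, since a direct application of Lemma \ref{bdg2} only produces $H_n$ with the bound $H_n(x) < \kappa_x^{+\sigma(n)}$, which in general does not suffice for $[H_n]_{U_n} < h_\beta(n)$.

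Finally, $S$ remains stationary in $V[G]$ by the $\mu$-c.c.\ of $\mathbb{Q}$, and a standard cofinality analysis for diagonal supercompact Prikry forcing furnishes a stationary $S' \subseteq S$ whose members retain cofinality $\delta^{+\omega+1}$ in $V[G]$; these $\beta$ witness the theorem.
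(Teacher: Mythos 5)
Your argument is correct and is essentially the paper's own proof: the paper also applies Fact \ref{scbadness} in $V$, pushes the eub through the representing functions $F^n_\eta$ (taking the eub to be $f_\beta$ itself after assuming $\overrightarrow{g}$ continuous, which coincides up to finite modification with your $g^*_\beta$), and verifies exactness by choosing cofinal sequences $\langle \eta^n_\xi\rangle$ in $V$, finding $\tau\in\prod_n\delta^{+\sigma(n)}$ with $h<^*f_\tau$, and pulling $\tau$ back into $V$ via the "no new bounded subsets of $\kappa$" property before invoking the eub property of the ground-model scale. Your $\xi^*_n$-minimization is just an explicit choice of that $\tau$, so the two proofs coincide in all essentials.
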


\begin{proof}

Let $G$ be $\mathbb{Q}$-generic over $V$, and let $\langle x_n \mid x<\omega \rangle$ be the associated generic sequence. $\overrightarrow{f} = \langle f_\alpha \mid \alpha < \mu \rangle$ is the bad scale in $\prod_{n<\omega} \kappa_n^{+\sigma(n)}$ defined as in the proof of Theorem \ref{badscale}, i.e. $f_\alpha(n) = F^n_{g_\alpha(n)}(x_n)$, where $\overrightarrow{g} = \langle g_\alpha \mid \alpha < \mu \rangle \in V$ is a scale in $\prod_{n<\omega} \kappa^{+\sigma(n)}$ and, for each $n<\omega$ and $\eta < \kappa^{+\sigma(n)}$, $F^n_\eta : \mathcal{P}_\kappa(\kappa^{+n}) \rightarrow \kappa$ is such that $[F^n_\eta]_{U_n} = \eta$.

By Fact \ref{scbadness}, there is an inaccessible $\delta < \kappa$ and a stationary $S\subseteq \mu \cap \mathrm{cof}(\delta^{+\omega+1})$ such that, for all $\beta \in S$, $\langle g_\alpha \mid \alpha < \beta \rangle$ has an eub, $g$, such that, for all $n<\omega$, $\mathrm{cf}(g(n)) = \delta^{+\sigma(n)}$. Without loss of generality, $\overrightarrow{g}$ is a continuous scale and this eub is in fact $g_\beta$.

Since $\mathbb{Q}$ has the $\mu$-c.c. and preserves the inaccessibility of $\delta$ and the regularity of $\delta^{+\omega+1}$, $S$ remains a stationary subset of $\mu \cap \mathrm{cof}(\delta^{+\omega+1})$ in $V[G]$. Thus, we will be done if we show that, for all $\beta \in S$, $\langle f_\alpha \mid \alpha < \beta \rangle$ has an eub $g$ such that, for all $n<\omega$, $\mathrm{cf}(g(n)) = \delta^{+\sigma(n)}$. In fact, we claim that this eub is, up to finite adjustments, $f_\beta$.

\begin{claim}
Let $\beta \in S$. Then, for sufficiently large $n<\omega$, $\mathrm{cf}(f_\beta(n)) = \delta^{+\sigma(n)}$
\end{claim}

\begin{proof}
For all $n$, $\mathrm{cf}(g_\beta(n)) = \delta^{+\sigma(n)}$. Thus, since $\delta^{+\sigma(n)} < \kappa$ and $[F^n_{g(\beta)}]_{U_n} = g(\beta)$, we know that, for all $n<\omega$ and almost all $x\in X_n$, $\mathrm{cf}(F^n_{g_\beta(n)}(x)) = \delta^{+\sigma(n)}$. Therefore, by genericity of $\langle x_n \mid n<\omega \rangle$, $\mathrm{cf}(f_\beta(n)) = \mathrm{cf}(F^n_{g_\beta(n)}(x_n)) = \delta^{+\sigma(n)}$ for all sufficiently large $n<\omega$.
\end{proof}

\begin{claim}
Let $\beta \in S$. Then $f_\beta$ is an eub for $\langle f_\alpha \mid \alpha < \beta \rangle$.
\end{claim}

\begin{proof}
In $V[G]$, fix $h \in \prod_{n<\omega} \kappa_n^{+\sigma(n)}$ such that $h < f_\beta$. We want to find $\alpha < \beta$ such that $h <^* f_\alpha$. We first define some auxiliary functions.

In $V$, for each $n<\omega$, let $\langle \eta^n_\xi \mid \xi < \delta^{+\sigma(n)} \rangle$ be an increasing sequence of ordinals cofinal in $g_\beta(n)$. In $V[G]$, for $\tau \in \prod_{n<\omega} \delta^{+\sigma(n)}$, define $f_\tau$ by letting, for all $n<\omega$, $f_\tau(n) = F^n_{\eta^n_{\tau(n)}}(x_n)$.

Since, for all $n<\omega$, $\langle \eta^n_\xi \mid \xi < \delta^{+\sigma(n)} \rangle$ is increasing and cofinal in $g_\beta(n)$ and since $\delta^{+\sigma(n)} < \kappa$, we have, by the $\kappa$-completeness of the measures, that, for all $n<\omega$, the set of $x\in X_n$ such that $\langle F^n_{\eta^n_\xi}(x)\mid \xi < \delta^{+\sigma(n)} \rangle$ is increasing and cofinal in $F^n_{g_\beta(n)}(x)$ is in $U_n$. Thus, for sufficiently large $n<\omega$, $\langle F^n_{\eta^n_\xi}(x_n) \mid \xi < \delta^{+\sigma(n)} \rangle$ is increasing and cofinal in $F^n_{g_\beta(n)}(x_n) = f_\beta(n)$. Thus, we can find $\tau \in \prod_{n<\omega} \delta^{+\sigma(n)}$ such that, for large enough $n<\omega$, $h(n) < F^n_{\eta^n_{\tau(n)}}(x_n)$, i.e. $h<^*f_\tau$.

Since $\mathbb{Q}$ does not add any bounded subsets of $\kappa$, we actually have $\tau \in V$. For all $n<\omega$, $\eta^n_{\tau(n)} < g_\beta(n)$, so, since $g_\beta$ is an eub for $\langle g_\alpha \mid \alpha < \beta \rangle$, there is $\alpha < \beta$ such that, for large enough $n<\omega$, $\eta^n_{\tau(n)} < g_\alpha(n)$. Thus, we know by genericity that, again for sufficiently large $n<\omega$, $F^n_{\eta^n_{\tau(n)}}(x_n) < F^n_{g_\alpha(n)}(x_n)$, i.e. $f_\tau <^* f_\alpha$. So $h<^*f_\tau <^* f_\alpha$, and we have have shown that $f_\beta$ is in fact an eub for $\langle f_\alpha \mid \alpha < \beta \rangle$ and hence proven the theorem.
\end{proof}
\end{proof}

Thus, the bad scales in the Gitik-Sharon model have stationarily many points which lie in the Good case of the Trichotomy Theorem but are nonetheless bad, since the eubs at these points have non-uniform cofinality. 

We now turn our attention to a bad scale isolated by Cummings, Foreman, and Magidor in \cite{canonicalstructure}. Let $\kappa$ be a supercompact cardinal and let $\overrightarrow{f} = \langle f_\alpha \mid \alpha < \kappa^{+\omega+1} \rangle$ be a continuous scale in $\prod_{n<\omega} \kappa^{+n}$. Let $\delta$ be as given in Fact \ref{scbadness} and let $G_0 \times G_1$ be $\mathrm{Coll}(\omega, \delta^{+\omega}) \times \mathrm{Coll}(\delta^{+\omega+2}, <\kappa)$-generic over $V$. Cummings, Foreman, and Magidor show that, in $V[G_0 \times G_1]$, $\overrightarrow{f}$ remains a scale, now living in $\prod_{n<\omega} \aleph_{n+3}$, and that the stationary set $S$ of bad points in $V$ of cofinality $\delta^{+\omega+1}$ is a stationary set of bad points in $V[G_0 \times G_1]$ of cofinality $\omega_1$. The Trichotomy Theorem does not apply to points of cofinality $\omega_1$ in increasing sequences of countable reduced products (see \cite{kojman} for a counterexample), but we show that the points in $S$ nonetheless fall into the Ugly case of the Trichotomy Theorem.

\begin{theorem}
In $V[G_0 \times G_1]$, if $\beta \in S$, then there is $h<f_\beta$ such that the sequence of sets $\langle \{n \mid f_\alpha(n) < h(n) \} \mid \alpha < \beta \rangle$ does not stabilize modulo bounded sets.
\end{theorem}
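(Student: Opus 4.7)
The plan is to exploit the cofinality mismatch created by the collapse. In $V$, by continuity of $\overrightarrow{f}$ and Fact \ref{scbadness}, $f_\beta$ is an exact upper bound for $\langle f_\alpha \mid \alpha < \beta \rangle$ with $\mathrm{cf}^V(f_\beta(n)) = \delta^{+n}$ and $\mathrm{cf}^V(\beta) = \delta^{+\omega+1}$. After forcing with $G_0 \times G_1$, $\beta$ retains cofinality $\omega_1 = (\delta^{+\omega+1})^V$, while each $f_\beta(n)$ acquires cofinality $\omega$, since $\delta^{+n}$ is collapsed to be countable. I work in $V[G_0]$, using that $\mathrm{Coll}(\delta^{+\omega+2}, <\kappa)$ is $\omega_2$-closed there and adds no $\omega$-sequences of ordinals. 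First I fix in $V$ increasing cofinal sequences $\langle \eta^n_\xi \mid \xi < \delta^{+n} \rangle$ in $f_\beta(n)$ and, using the collapse bijections $\pi_n : \omega \to \delta^{+n}$ from $G_0$, form in $V[G_0]$ the increasing cofinal $\omega$-sequences $\zeta^n_k := \sup\{\eta^n_{\pi_n(i)} : i \leq k\}$ in $f_\beta(n)$.

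For each $\alpha < \beta$, set $k^\alpha(n) := \min\{k : f_\alpha(n) < \zeta^n_k\}$, so $\langle k^\alpha \mid \alpha < \beta \rangle$ is $\leq^*$-increasing in ${}^\omega \omega$. Any candidate $h < f_\beta$ can be replaced without loss of generality by $h_\phi(n) := \zeta^n_{\phi(n)}$ for some $\phi \in {}^\omega \omega$, and then $\{n : f_\alpha(n) < h_\phi(n)\} = \{n : k^\alpha(n) \leq \phi(n)\}$ modulo a finite set. The task reduces to producing $\phi$ such that the sequence $A_\alpha := \{n : k^\alpha(n) \leq \phi(n)\}$ for $\alpha < \beta$, which is decreasing modulo finite in $\alpha$, does not stabilize modulo finite. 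Fix a cofinal continuous sequence $\langle \gamma_\zeta \mid \zeta < \omega_1 \rangle$ in $\beta$; the badness of $\beta$ (preserved from $V[G_0 \times G_1]$ down to $V[G_0]$ by closure of $G_1$) says that for every cofinal $D \subseteq \omega_1$ and every $n^*$, there exist $\zeta < \zeta' \in D$ and $n \geq n^*$ with $k^{\gamma_\zeta}(n) < k^{\gamma_{\zeta'}}(n)$ strictly, while for each $n$ the set $\{k^{\gamma_\zeta}(n) : \zeta < \omega_1\}$ is unbounded in $\omega$ because $f_\beta$ is a $V$-eub with coordinates of cofinality $\delta^{+n}$.

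I would then construct $\phi$ so that, for a cofinal set of $\zeta < \omega_1$, there is $\zeta' > \zeta$ with $\{n : k^{\gamma_\zeta}(n) \leq \phi(n) < k^{\gamma_{\zeta'}}(n)\}$ infinite; this gives $A_{\gamma_\zeta} \setminus A_{\gamma_{\zeta'}}$ infinite, and by monotonicity modulo finite of the $A_\alpha$'s in $\alpha$, non-stabilization propagates to every $\alpha_0 < \beta$, making $h := h_\phi$ the required witness. The main obstacle is that $\phi$ encodes only countably many values, while the property must be ensured at cofinally many $\zeta \in \omega_1$, ruling out naive stage-by-stage handling. My intended approach is to build $\phi$ as the limit of a transfinite chain of countable approximations paired with witnessing pairs $(\zeta, \zeta')$, using at each stage the failure of goodness to supply a fresh pair realized at a fresh coordinate, and invoking a Fodor-style stationarity argument on the $\omega_1$-sequence of $k^{\gamma_\zeta}$'s to ensure the construction passes through cofinally many $\zeta$'s.
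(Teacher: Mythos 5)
Your setup is right — the cofinality mismatch is the engine, and reducing the problem to finding a single $\phi \in {}^{\omega}\omega$ (equivalently $h_\phi(n) = \zeta^n_{\phi(n)}$) interleaving with the $\leq^*$-increasing sequence $\langle k^{\gamma_\zeta} \mid \zeta < \omega_1\rangle$ is a faithful reformulation. But the proof stops exactly where the work begins: you name the obstacle ($\phi$ has only countably many coordinates, yet must witness non-stabilization at cofinally many $\zeta < \omega_1$) and then propose a construction that does not resolve it. A length-$\omega_1$ increasing chain of countable approximations to a function with domain $\omega$ must stabilize after countably many stages, so the later stages cannot ``supply a fresh pair realized at a fresh coordinate''; and it is not clear what regressive function a Fodor argument would be applied to, or how pressing-down would let one $\phi$ serve uncountably many pairs $(\zeta,\zeta')$. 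As written, the existence of the witness is asserted, not proved.

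The missing idea is to not build $h$ by hand at all, but to read it off from the generic. The paper sets $h(n) = \eta^n_{g(n)}$ (and $0$ if $g(n) \geq \delta^{+n}$), where $g$ is the generic surjection $\omega \to \delta^{+\omega}$ added by $G_0$ and the cofinal sequences $\langle \eta^n_\xi \mid \xi < \delta^{+n}\rangle$ lie in $V$. Then, for each fixed $\alpha < \beta$, one finds in $V$ a single function $\tau \in \prod_n \delta^{+n}$ with $f_\alpha <^* \langle \eta^n_{\tau(n)}\rangle_n <^* f_{\alpha'}$ for some $\alpha' < \beta$ (using that $f_\beta$ is, mod finite, an eub of $\overrightarrow{f}\restriction\beta$ in $V$), and an easy density argument for the finite-condition collapse shows $\{n \mid g(n) = \tau(n)\}$ is infinite. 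This discharges each requirement against a single ground-model function, and genericity handles all $\beta$-many requirements simultaneously — precisely the uniformity your transfinite recursion cannot supply. Note also that this device is unavailable in your formulation as stated, since your $k^{\gamma_\zeta}$'s are defined using the collapse bijections $\pi_n$ and hence live in $V[G_0]$ rather than $V$; keeping the auxiliary objects ($\tau$, the $\eta$'s, the $f_\alpha$'s) in the ground model is what makes the density argument go through.
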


\begin{proof}
Let $\beta \in S$. We must produce an $h$ such that, for every $\alpha < \beta$, there is $\alpha'<\beta$ such that $\{n \mid f_\alpha(n) < h(n) < f_{\alpha'}(n) \}$ is infinite. We will actually show that such an $h$ exists in $V[G_0]$. First, for concreteness, we remark that we are thinking of conditions in $\mathrm{Coll}(\omega, \delta^{+\omega})$ as finite partial functions from $\omega$ into $\delta^{+\omega}$. Let $g$ be the generic surjection from $\omega$ onto $\delta^{+\omega}$ added by $G_0$. In $V$, we know that, for every $n<\omega$, $\mathrm{cf}(f_\beta(n)) = \delta^{+n}$. For each $n<\omega$, let $\langle \eta^n_\xi \mid \xi < \delta^{+n} \rangle$ be increasing and cofinal in $f_\beta(n)$. In $V[G_0]$, define $h$ as follows: if $n$ is such that $g(n)<\delta^{+n}$, then let $h(n) = \eta^n_{g(n)}$. If $g(n) > \delta^{+n}$, let $h(n)=0$. 

Clearly, $h < f_\beta$. Let $\alpha < \beta$. In $V$, there is $\tau \in \prod \delta^{+n}$ such that, for sufficiently large $n$, $f_\alpha(n) < \eta^n_{\tau(n)}$. Also, since $f_\beta$ is an eub, there is $\alpha' < \beta$ such that, again, for sufficiently large $n$, $\eta^n_{\tau(n)} < f_{\alpha'}(n)$. An easy density argument shows that, for infinitely many $n<\omega$, $g(n) = \tau(n)$ and thus $h(n) = \eta^n_{\tau(n)}$. Therefore, for infinitely many $n<\omega$, $f_\alpha(n) < h(n) < f_{\alpha'}(n)$.
\end{proof}

We end this section by briefly remarking on two other models in which bad scales exist. A result of Magidor \cite{cummingsmagidor} shows that, if Martin's Maximum holds, then any scale of length $\aleph_{\omega+1}$ in $\prod_A \aleph_n$, where $A\subseteq \omega$, is bad. Foreman and Magidor, in \cite{foremanmagidor}, show that the same conclusion follows from the Chang's Conjecture $(\aleph_{\omega+1}, \aleph_\omega) \twoheadrightarrow (\aleph_1, \aleph_0)$. The proofs of these results immediately yield that, in both cases, such a scale has stationarily many points of cofinality $\omega_1$ that fall into the Bad case of the Trichotomy Theorem.

\section{Down to $\aleph_{\omega^2}$}

In \cite{gitiksharon}, Gitik and Sharon show how to arrange so that $\kappa$, which is supercompact in $V$, becomes $\aleph_{\omega^2}$ in the forcing extension, SCH and approachability both fail at $\aleph_{\omega^2}$, and there is $A\subseteq \aleph_{\omega^2}$ that carries a very good scale. We start this section by reviewing their construction, being slightly more careful with our preparation of the ground model so that our results from Section 3 carry down.

In $V$, let $j:V\rightarrow M$ be the elementary embedding derived from $U$, a supercompactness measure on $\mathcal{P}_\kappa(\kappa^{+\omega+1})$. Let $\mathbb{P}$ be the backward Easton-support iteration from Section 3, and let $j^*:V[G]\rightarrow M[G*H*I]$ be as in Lemma \ref{preplem}. Let $U^*$ be the measure on $\mathcal{P}_\kappa(\kappa^{+\omega+1})$ derived from $j^*$ and, for $n<\omega$, let $U_n^*$ be the projection of $U^*$ onto $\mathcal{P}_\kappa(\kappa^{+n})$. Let $i_n^*:V[G] \rightarrow N_n$ be the elementary embedding derived from the ultrapower of $V[G]$ by $U^*_n$ and let $k_n:N_n \rightarrow M[G*H*I]$ be the factor map. The functions $\{g^n_\beta \mid \beta < j(\kappa^{+n})\}$ witness that $i_n^*(\kappa^{+n})=j(\kappa^{+n})$, so $\mathrm{crit}(k_n) > j(\kappa^{+n})$.

As before, we can find, in $V[G]$, an $H^*$ that is $\mathrm{Coll}(\kappa^{+\omega+2}, <j(\kappa))^{M[G*H*I]}$-generic over $M[G*H*I]$. For all $n<\omega$, since $\mathrm{crit}(k_n) > j(\kappa)$ and $\mathrm{Coll}(\kappa^{+\omega+2}, <i_n^*(\kappa))^{N_n}$ has the $i_n^*(\kappa)$-c.c., the filter generated by $k_n^{-1}[H^*]$, which we will call $H_n$, is $\mathrm{Coll}(\kappa^{+\omega+2}, <i_n^*(\kappa))^{N_n}$-generic over $N_n$.

In $V[G]$, let $\delta < \kappa$ be an inaccessible cardinal such that, for every $\sigma \in {^\omega}\omega$ such that $\sigma(n) \geq n$ for all $n<\omega$, there is a scale $\overrightarrow{f}$ in $\prod_{n<\omega} \kappa^{+\sigma(n)}$ of length $\kappa^{+\omega+1}$ such that there are stationarily many $\beta < \kappa^{+\omega+1}$ of cofinality $\delta^{+\omega+1}$ such that $\overrightarrow{f}\restriction \beta$ has an eub, $g$, such that, for all $n<\omega$, $\mathrm{cf}(g(n)) = \delta^{+\sigma(n)}$. Such a $\delta$ exists by the proof of Fact \ref{scbadness}, which can be found in \cite{canonicalstructure}. We now define a version of the diagonal supercompact Prikry forcing with interleaved collapses. Conditions in this poset, which we again call $\mathbb{Q}$, are of the form 
\[
q=\langle c^q, x_0^q, f_0^q, x_1^q, f_1^q, \ldots ,x_{n-1}^q, f_{n-1}^q, A_n^q, F_n^q, A_{n+1}^q, F_{n+1}^q, \ldots \rangle
\]
such that the following conditions hold:
\begin{enumerate}
	\item{$\langle x_0^q, x_1^q, \ldots ,x_{n-1}^q, A_n^q, A_{n+1}^q \ldots \rangle$ is a condition in the diagonal supercompact Prikry forcing defined using the $U^*_n$'s.}
	\item{If $n=0$, $c^q \in \mathrm{Coll}(\omega, <\delta) \times \mathrm{Coll}(\delta^{+\omega+2}, <\kappa)$.}
	\item{If $n\geq 1$, $c^q \in \mathrm{Coll}(\omega, <\delta) \times \mathrm{Coll}(\delta^{+\omega+2}, <\kappa_{x_0^q})$.}
	\item{For all $i<n-1$, $f^q_i \in \mathrm{Coll}(\kappa_{x^q_i}^{+\omega+2}, <\kappa_{x^q_{i+1}})$.}
	\item{$f^q_{n-1} \in \mathrm{Coll}(\kappa_{x^q_{n-1}}^{+\omega+2}, <\kappa)$.}
	\item{For all $\ell \geq n$, $F_\ell^q$ is a function with domain $A_\ell^q$ such that $F_\ell^q(x)\in \mathrm{Coll}(\kappa_x^{+\omega+2}, < \kappa)$ for all $x\in A_\ell^q$ and $i_\ell^*(F_\ell^q)(i_\ell``\kappa^{+\ell}) \in H_\ell$.}
\end{enumerate}
As before, $n$ is the length of $q$, denoted $\mathrm{lh}(q)$. If $p,q \in \mathbb{Q}$, where $\mathrm{lh}(p)=n$ and $\mathrm{lh}(q)=m$, then $p\leq q$ if and only if:
\begin{enumerate}
	\item{$n\geq m$ and $\langle x_0^p, \ldots, x_{n-1}^p, A_n^p, \ldots \rangle \leq \langle x_0^q, \ldots, x_{m-1}^q, A_m^q, \ldots \rangle$ in the standard supercompact diagonal Prikry poset.}
	\item{$c^p\leq c^q$.}
	\item{For all $i\leq m-1$, $f^p_i \leq f^q_i$.}
	\item{For all $i$ such that $m\leq i \leq n-1$, $f^p_i \leq F^q_i(x^p_i)$.}
	\item{For all $i \geq n$ and all $x\in A^p_i$, $F^p_i(x)\leq F^q_i(x)$.}
\end{enumerate}
$p \leq^* q$ if and only if $p \leq q$ and $\mathrm{lh}(p) = \mathrm{lh}(q)$. If $q=\langle c^q, x_0^q, f_0^q, x_1^q, f_1^q, \ldots ,x_{n-1}^q$, $f_{n-1}^q, A_n^q, F_n^q, A_{n+1}^q, F_{n+1}^q, \ldots \rangle$, then the lower part of $q$, $\langle c^q, x_0^q, f_0^q, x_1^q, f_1^q, \ldots $, $x_{n-1}^q, f_{n-1}^q \rangle$, is denoted $s(q)$.

If $J$ is $\mathbb{Q}$-generic over $V[G]$ and $\langle x_n \mid n<\omega \rangle$ is the associated Prikry sequence, then, for each $n<\omega$, letting $\kappa_n = \kappa_{x_n}$, we have $\kappa_n = (\aleph_{\omega \cdot (n+1)+3})^{V[G*J]}$, $\kappa = (\aleph_{\omega^2})^{V[G*H]}$, and $(\kappa^{+\omega+1})^{V[G]} = (\aleph_{\omega^2+1})^{V[G*J]}$.

The results about scales transfer down in a straightforward manner. We provide some details regarding the proofs of two analogs of Lemmas \ref{bdg1} and \ref{bdg2}, since these are complicated somewhat by the interleaving of collapses.

\begin{lemma}
In $V[G*J]$, let 
\[
h \in \prod_{\substack{n<\omega\\ i\leq n}}\sup(x_{n+1}\cap \kappa^{+i}).
\]
Then there is $\langle h_{n,i} \mid n<\omega, \ i \leq n \rangle \in V[G]$ such that $h_{n,i}:\mathcal{P}_\kappa(\kappa^{+n})\rightarrow \kappa^{+i}$ and, for large enough $n$ and all $i\leq n$, $h(n,i) < h_{n,i}(x_n)$. 
\end{lemma}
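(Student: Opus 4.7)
The plan is to adapt the proof of Lemma~\ref{bdg1} to the setting with interleaved collapses. As before, I first reduce to the case $h(n,i)\in x_{n+1}\cap\kappa^{+i}$ by replacing $h(n,i)$ with $\min(x_{n+1}\setminus h(n,i))$. Fix a $\mathbb{Q}$-name $\dot h$ and, for simplicity, assume the trivial condition forces $\dot h$ to take values in the relevant product. Working in $V[G]$, fix an arbitrary $q_0\in\mathbb{Q}$; it suffices to produce a direct extension $p\leq^* q_0$ and functions $h_{n,i}\in V[G]$ witnessing the conclusion.

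The proof proceeds along the same four steps as Lemma~\ref{bdg1}: (i) for each lower part $s$ of length $n+2$ compatible with $q_0$ and having trivial top collapse at position $n+1$, iteratively apply the Prikry property to decide $\dot h(n,i)=\alpha_{s,i}\in x^s_{n+1}\cap\kappa^{+i}$ for each $i\leq n$; (ii) take a diagonal intersection to obtain a condition $q'$ such that $s^\frown q'\Vdash\dot h(n,i)=\alpha_{s,i}$ for every compatible $s$ with trivial top collapse; (iii) for each lower part $t$ of length $n+1$ compatible with $q'$ with trivial top collapse, observe that $y\mapsto\alpha_{t^\frown\langle y,\mathbbm{1}\rangle,i}$ takes values in $y\cap\kappa^{+i}$ and is therefore regressive, so by Fodor's lemma it is constant with value $\beta_{t,i}$ on a $U^*_{n+1}$-measure-one set; (iv) a further diagonal intersection produces the desired condition $p$ such that $t^\frown p\Vdash\dot h(n,i)=\beta_{t,i}$ for every compatible $t$ of length $n+1$ with trivial top collapse. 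In step~(i) the Prikry iteration requires roughly $|x^s_{n+1}|$-many applications, which is accommodated by exploiting the $\kappa_{x_{n+1}}^{+\omega+2}$-closure of the top collapse $\mathrm{Coll}(\kappa_{x_{n+1}}^{+\omega+2},<\kappa)$, together with the $\kappa$-completeness of the measures governing the upper part: one strengthens only the top collapse and the upper part during the iteration, so that the weaker-closure lower collapses such as $\mathrm{Coll}(\omega,<\delta)$ are never touched.

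The principal new difficulty relative to Lemma~\ref{bdg1} is the counting argument. We define $h_{n,i}(x)=\sup\{\beta_{t,i}+1 : t$ a lower part of length $n+1$ with trivial top collapse and top Prikry element $x\}$ and must show $h_{n,i}(x)<\kappa^{+i}$. A naive count fails because the top collapse $f_n\in\mathrm{Coll}(\kappa_x^{+\omega+2},<\kappa)$ admits $\kappa$-many possibilities; restricting to lower parts with trivial top collapse throughout the construction (as done in steps (i)--(iv) above) keeps the count under control, since the remaining components $c,f_0,\ldots,f_{n-1}$ and $x_0,\ldots,x_{n-1}\prec x$ each admit $<\kappa$ choices, yielding $<\kappa$-many $t$ with top $x$ and hence $h_{n,i}(x)<\kappa^{+i}$. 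Finally, since the trivial top collapse is the weakest element of $\mathrm{Coll}(\kappa_x^{+\omega+2},<\kappa)$, any generic top collapse $f_n^{\mathrm{gen}}$ strengthens it; consequently, the generic condition extends some $t^\frown p$ in our enumeration (with $t$ matching the generic lower collapses and Prikry elements), yielding $p\Vdash h(n,i)<h_{n,i}(\dot x_n)$ for sufficiently large $n$.
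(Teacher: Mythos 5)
Your skeleton (reduce to $h(n,i)\in x_{n+1}\cap\kappa^{+i}$, decide values via the Prikry property, apply Fodor to get a regressive bound, diagonally intersect) matches the paper's, and you have correctly identified that the new difficulty is created by the interleaved collapses. But your resolution --- restricting to lower parts with \emph{trivial} top collapse and assigning each such lower part an exact decided value --- does not work. The value of $\dot h(n,i)$ can genuinely depend on the collapse generics at and below position $n+1$: for instance, $\dot h(n,i)$ could name the $0$-th or $1$-st element of $x_{n+1}\cap\kappa^{+i}$ according to the value at $0$ of the generic collapsing function added at level $n+1$. Consequently no condition that leaves the collapse coordinates undetermined can force $\dot h(n,i)=\beta_{t,i}$ for a single ordinal $\beta_{t,i}$, and your step (iv) claim that $t^\frown p\Vdash``\dot h(n,i)=\beta_{t,i}"$ is false for such names. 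The Prikry-property applications in your step (i) do not rescue this: each application returns a direct extension whose collapse coordinates have been strengthened, so the ``decided'' value $\alpha_{s,i}$ is only forced below that particular strengthening. Your closing observation that the generic top collapse extends the trivial one is beside the point --- the generic collapse filter need not contain the particular strengthening your construction used, so the generic value of $\dot h(n,i)$ need not equal, nor even be bounded by, the $\beta_{t,i}$'s you recorded. There is also no ``refined Prikry property'' that decides such a statement while touching only the top collapse and the upper part, precisely because the statement's truth value can depend on the lower collapse generics.

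The paper's proof replaces exact decision by a \emph{bound}. After a preliminary strong-Prikry-property step, it uses the $\kappa_x^{+\omega+2}$-closure of $\mathrm{Coll}(\kappa_x^{+\omega+2},<\kappa)$ to fix, for each $x\in A^p_{n+1}$, a single condition $f_x\le F^p_{n+1}(x)$ below which every candidate value $\alpha\in x\cap\kappa^{+i}$ is decided by some strengthening $s'$ of the \emph{lower} collapses. The set of $\alpha$ decided positively by such conditions with a fixed Prikry part $t$ is then small --- not because there are few lower parts, but because the finite product of the interleaved Levy collapses appearing in them has the $\kappa_x$-c.c., so distinct positively decided values come from pairwise incompatible collapse conditions. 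This yields an ordinal $\alpha_{t,x,i}\in x\cap\kappa^{+i}$ bounding all possible values, and it is this bound (not an exact value) that is regressive in $x$ and to which Fodor is applied. Your counting paragraph correctly flags that the top collapse has $\kappa$-many conditions, but the fix is the chain-condition argument just described, not discarding the collapse data.
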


\begin{proof}
As before, we may let $\dot{h} \in V[G]$ be a $\mathbb{Q}$-name for $h$, we may assume that 
\[
h \in \prod_{\substack{n<\omega\\ i\leq n}}x_{n+1}\cap \kappa^{+i},
\]
and we may assume we are working in $V[G]$ below the trivial condition. By a routine diagonal construction using the Prikry property, we may find a condition $p \in \mathbb{Q}$ such that $\mathrm{lh}(p) = 0$ and, for every $n < \omega$, for every $q \leq p$ of length $n + 2$, for every $i \leq n$, and for every $\alpha \in x^q_{n+1} \cap \kappa^{+i}$, if $q$ decides the truth value of the statement $``\dot{h}(n,i) = \alpha"$, then $s(q) ^\frown p$ decides the statement as well.

Let $n < \omega$, and let $x \in A^p_{n + 1}$. Let $S(x)$ denote the set of lower parts $s$ of length $n + 1$ such that $x^s_{n} \prec x$. By appeal to the Prikry property, our choice of $p$, and the closure of $\mathrm{Coll}(\kappa^{+ \omega + 2}_x, < \kappa)$, we can find $f_x \leq F^p_{n + 1}(x)$ such that, for every $s \in S(x)$ compatible with $p$, for every $i \leq n$, and for every $\alpha \in x \cap \kappa^{+i}$, there is a lower part $s' \leq s$ such that $s' {^\frown} \langle x, f_x \rangle ^\frown p$ decides the truth value of the statement $``\dot{h}(n,i) = \alpha"$. Find $p' \leq^* p$ such that, for every $n < \omega$ and every $x \in A^{p'}_{n+1}$, $F^{p'}_{n+1}(x) \leq f_x$. Note that $p'$ has the property that, for every $n < \omega$, for every lower part $s$ of length $n+1$ compatible with $p'$, for every $i \leq n$, and for every $x \in A^{p'}_{n+1}$, there is a lower part $s' \leq s$ such that $s' {^\frown} \langle x, F^{p'}_{n+1}(x) \rangle {^\frown} p'$ decides the value of $\dot{h}(n,i)$.

For every lower part $s = \langle c^s, x_0^s, f_0^s, x_1^s, f_1^s, \ldots $, $x_{n-1}^s, f_{n-1}^s \rangle$, let $t(s) = \langle x_0^s, x_1^s, \ldots, x_{n-1}^s \rangle$, i.e. $t(s)$ is the Prikry part of $s$. For all $n < \omega$, all $x \in A^{p'}_{n+1}$, all $i \leq n$, and all Prikry lower parts $t$ of length $n + 1$ below $x$, let $\alpha_{t,x,i}$ be the least $\alpha^* \in x \cap \kappa^{+i}$ such that, for every lower part $s$ with $t(s) = t$, if, for some $\alpha$, $s {^\frown} \langle x, F^{p'}_{n+1}(x) \rangle {^\frown} p' \Vdash ``\dot{h}(n,i) = \alpha"$, then $\alpha < \alpha^*$. Such an $\alpha^*$ must exist because the product of the interleaved Levy collapses appearing in such lower parts has the $\kappa_x$-c.c. 

For each $n$, each $i\leq n$ and each Prikry lower part $t$ of length $n+1$ compatible with $p'$, define a regressive function on $A^{p'}_{n+1}$ which takes $x$ and returns $\alpha_{t,x,i}$. This function is constant, returning value $\beta_{t,i}$, on a measure-one set $B_{t,i} \subseteq A^{p'}_{n+1}$. Let $B_t = \cap_{i\leq n} B_{t,i}$, and let $\langle C_0, C_1, \ldots \rangle$ be the diagonal intersection of the $B_t$'s. Let $q$ be the natural restriction of $p'$ to the measure-one sets $\langle C_0, C_1, \ldots \rangle$. For all $n < \omega$, $i \leq n$, and $x\in C_n$, let $h_{n,i}(x) = \sup(\{\beta_{t,i} \mid t$ is a Prikry lower part of length $n+1$ with top element $x\})$. As before, it is routine to verify that $q$ forces $\langle h_{n,i} \mid n<\omega, \ i \leq n \rangle$ to be as desired.
\end{proof}

The proofs of the analogs of Lemma \ref{bdg2} are mostly similar, exploiting the closure of the interleaved collapsing posets. We provide some details in one specific case, namely the lemma associated with the proof that there is a scale in $\prod_{n<\omega} \kappa_n^{+\omega + 2}$. The proof is a modification of a similar proof in \cite{su}. We would like to thank Spencer Unger for pointing out the difficulties of this case and for directing us to \cite{su}.

\begin{lemma}
In $V[G*J]$, let \[h \in \prod_{n<\omega} \kappa_n^{+\omega+2}.\] Then there is $\langle h_n \mid n<\omega \rangle \in V[G]$ such that $\mathrm{dom}(h_n) = \mathcal{P}_\kappa(\kappa^{+n})$, $h_n(x) < \kappa_x^{+\omega + 2}$ for all $x \in \mathcal{P}_\kappa(\kappa^{+n})$, and, for large enough $n$, $h(n) < h_n(x_n)$.
\end{lemma}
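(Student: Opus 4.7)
The plan is to follow the template of Lemma~\ref{bdg2} and of the immediately preceding lemma, the new difficulty being that the bound $\kappa_n^{+\omega+2}$ on $\dot h(n)$ coincides with the closure of the interleaved collapse $\mathrm{Coll}(\kappa_x^{+\omega+2},<\kappa)$ at position $n$ (for a condition with top element $x$); this matching prevents a straightforward use of the Prikry property to absorb all decisions into a single direct extension, and it is the reason the proof modifies the argument of \cite{su} rather than simply imitating the earlier techniques.

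Fix a $\mathbb{Q}$-name $\dot h$ for $h$ and a condition $q\in\mathbb{Q}$; I produce $p\leq^* q$ forcing the conclusion. A routine diagonal application of the Prikry property, mirroring the one in the previous lemma, first yields a condition, still called $p$, of length $0$ such that any decision about $\dot h(n)$ made by a length-$(n+1)$ extension of $p$ is captured by that extension's lower part adjoined to the upper part of $p$. Next, for each lower part $s$ of length $n+1$ compatible with $p$ with top $x=x^s_n$, I produce an upper part and an ordinal $\alpha_{s^-}<\kappa_x^{+\omega+2}$, depending only on the reduced lower part $s^-$ obtained by deleting $s$'s final collapse $f^s_n$ at position $n$, such that $s$ extended by that upper part forces $\dot h(n)<\alpha_{s^-}$. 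The uniformity in $f^s_n$ is crucial for the later diagonal intersection and is arranged by first shrinking each measure-one set $A^p_m$ for $m>n$ to the $U^*_m$-measure-one subset of those $y$ for which $\kappa_y$ is substantially larger than $\kappa_x$, thereby extracting additional closure in the upper forcing beyond $\kappa_x^{+\omega+2}$; this extra closure then lets one fuse the separate Prikry-decided bounds arising from the various $f^s_n$'s into a single uniform bound $\alpha_{s^-}$. This is the main technical step and is the place where the \cite{su} argument is invoked.

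A diagonal intersection indexed by reduced lower parts (of which there are only ${<}\kappa$ with top ${\prec}\,y$ for any fixed $y$, since the Prikry points of such lower parts are drawn from $V$-definable sets and $V$ satisfies GCH) then yields $p'\leq^* p$ such that $s{^\frown}p'\Vdash \dot h(n)<\alpha_{s^-}$ for every compatible lower part $s$ of length $n+1$. Setting
\[
h_n(x)=\sup\{\alpha_{s^-}\mid s^- \text{ a reduced lower part of length } n+1 \text{ compatible with } p' \text{ with } x^{s^-}_n=x\},
\]
GCH in $V$ bounds the number of reduced lower parts with fixed top $x$ by $\kappa_x^{+n+1}$, so $h_n(x)<\kappa_x^{+\omega+2}$; by genericity $p'$ forces $h(n)<h_n(x_n)$ for all sufficiently large $n$.
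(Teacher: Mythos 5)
There is a genuine gap at the central step of your argument: the claim that, for a fixed reduced lower part $s^-$ with top Prikry point $x$, one can find a single ordinal $\alpha_{s^-}<\kappa_x^{+\omega+2}$ bounding $\dot h(n)$ uniformly over all choices of the final collapse condition $f^s_n\in \mathrm{Coll}(\kappa_x^{+\omega+2},<\kappa)$. These $f$'s form a set of size $\kappa$ consisting of pairwise incompatible conditions, and each one may force $\dot h(n)$ to be a different ordinal below $\kappa_x^{+\omega+2}$; since $\kappa_x^{+\omega+2}$ is far smaller than $\kappa$, the supremum of the decided values over all $f$ need not be below $\kappa_x^{+\omega+2}$. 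The mechanism you invoke to "fuse" these bounds --- shrinking the measure-one sets $A^p_m$ for $m>n$ so that the upper forcing is more than $\kappa_x^{+\omega+2}$-closed --- does not address this: closure lets you intersect along a descending chain of direct extensions, but it cannot amalgamate decisions made by $\kappa$-many mutually incompatible conditions at coordinate $n$. This is precisely why the naive imitation of Lemma~\ref{bdg2} breaks down here, and your proposal reintroduces the same obstruction one step later.

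The paper's proof gets around this by never attempting a pointwise-uniform bound at a single $x$. Instead it restricts attention to the collapse conditions that can actually occur in a condition of $\mathbb{Q}$, namely those of the form $F(x)$ for $F\in K_n$, i.e.\ with $i_n^*(F)(i_n``\kappa^{+n})\in H_n$. Any two such $F$'s are compatible $U^*_n$-almost everywhere (their images lie in the common filter $H_n$), so for each fixed lower-part shape the set of ultrapower ordinals $[x\mapsto h_{n,x}(F(x))]_{U^*_n}$ has essentially one nonzero element; summing over the fewer than $\kappa^{+n}$ ultrapower lower parts gives a single ordinal $\alpha_n<\kappa^{+\omega+2}$ bounding everything \emph{in the ultrapower}. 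This bound is then reflected below each $\kappa_x^{+\omega+2}$ via the prepared functions $g_{\alpha_n}:\kappa\to\kappa$ with $j^*(g_{\alpha_n})(\kappa)=\alpha_n$ and $g_{\alpha_n}(\gamma)<\gamma^{+\omega+2}$, setting $h_n(x)=g_{\alpha_n}(\kappa_x)$. Your proposal uses neither the auxiliary generics $H_n$ nor the preparation forcing's representing functions, and these are exactly the two ingredients that make this case work; without them the counting argument at the end has nothing to count.
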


\begin{proof}
As usual, working in $V[G]$, let $\dot{h}$ be a name for $h$ and assume, using the Prikry property, that we are working below a condition $p$ such that $\mathrm{lh}(p) = 0$ and, for all $n < \omega$, for every lower part $s$ of length $n + 1$ compatible with $p$, for every $\alpha < \kappa_{x^s_n}^{+\omega + 2}$, there is a lower part $s' \leq s$  such that $s' {^\frown} p$ decides the truth value of the statement $``\dot{h}(n) = \alpha"$.

For $n < \omega$, $x \in A^p_n$, and $f \in \mathrm{Coll}(\kappa_x^{+\omega + 2}, < \kappa)$, let $h_{n,x}(f) = \sup(\{\alpha \mid$ for some lower part $s$ of length $n + 1$ with top two elements $\langle x, f \rangle$, $s {^\frown} p \Vdash ``\dot{h}(n) = \alpha"\})$. Since the number of lower parts of length $n$ below $x$ is less than $\kappa_x^{+n}$, we have $h_{n,x}(f) < \kappa_x^{+\omega + 2}$. 

For $n < \omega$, let $K_n$ denote the set of functions $F$ on $X_n$ such that, for all $x \in X_n$, $F(x) \in \mathrm{Coll}(\kappa_x^{+\omega + 2}, < \kappa)$ and $i^*_n(F)(i_n``\kappa^{+n}) \in H_n$ (i.e. $K_n$ is the set of functions $F$ that could appear in the $n^{th}$ collapsing coordinate of a condition in $\mathbb{Q}$ of length less than $n$). For $n < \omega$, let $\alpha_n = \sup(\{[x \mapsto h_{n,x}(F(x))]_{U_n^*} \mid F \in K_n\})$.

We claim that $\alpha_n < \kappa^{+\omega + 2}$. To see this, we define, for all $x \in A^p_n$ and every lower part $s$ of length $n$ below $x$, $h_{n,s,x}(f) = \alpha$ if $s {^\frown} \langle x, f \rangle {^\frown} p \Vdash ``\dot{h}(n) = \alpha"$ and $h_{n,s,x}(f) = 0$ if there is no such $\alpha$. If $F_0, F_1 \in K_n$, then $F_0(x)$ and $F_1(x)$ are compatible for almost every $x$, so, for every lower part $s$ of length $n$, $\{[x \mapsto h_{n,s,x}(F(x))]_{U_n^*} \mid F \in K_n\}$ has at most one non-zero element, which must be less than $\kappa^{+\omega + 2}$. Let $S^* = \{[x \mapsto \bar{s}(x)]_{U^*_n} \mid$ for all $x \in X_n, \bar{s}(x) \in S(x)\}$. Since, for each $x \in X_n$, $|S(x)| < \kappa_x^{+n}$, we have $|S^*| < \kappa^{+n}$. For functions $\bar{s}$ such that $[\bar{s}]_{U^*_n} \in S^*$, let $\alpha_{n, \bar{s}} = \sup(\{[x \mapsto h_{n, \bar{s}(x), x}(F(x))]_{U_n^*} \mid F \in K_n\}$. By the above comments, $\alpha_{n, \bar{s}} < \kappa^{+\omega + 2}$. Finally, note that $\alpha_n = \sup(\{\alpha_{n, \bar{s}} \mid [\bar{s}]_{U^*_n} \in S^* \})$. Since $|S^*| < \kappa^{+n}$, we have $\alpha_n < \kappa^{+\omega + 2}$.

For all $\beta < \kappa^{+\omega + 2}$, fix a function $g_\beta:\kappa \rightarrow \kappa$ and such that $j^*(g_\beta)(\kappa) = \beta$ such that, for every $\gamma < \kappa$, $g(\gamma) < \gamma^{+\omega + 2}$. For $n < \omega$, define $h_n$ by letting $h_n(x) = g_{\alpha_n}(\kappa_x)$. It is routine to check that $p$ forces $\langle h_n \mid n<\omega \rangle$ to be as desired.
\end{proof}

Now we can prove as before that, in $V[G*J]$, we have the following situation:
\begin{itemize}
\item{$\prod_{n<\omega}\aleph_{\omega \cdot n+1}$ carries a very good scale of length $\aleph_{\omega^2+1}$.}
\item{$\prod_{n<\omega}\aleph_{\omega \cdot n+2}$ carries a scale of length $\aleph_{\omega^2+2}$ such that, for every $\beta < \aleph_{\omega^2+2}$ such that $\omega_1 \leq \mathrm{cf}(\beta) < \aleph_{\omega^2}$, $\beta$ is very good for the scale.}
\item{If $\sigma \in {^\omega}\omega$ is such that, for every $n<\omega$, $\sigma(n) < n$, then $\prod_{n<\omega}\aleph_{\omega \cdot (n+1) + \sigma(n) + 3}$ carries a scale of length $\aleph_{\omega^2+2}$ such that, as in the previous item, all relevant points are very good.}
\item{If $\sigma \in {^\omega}\omega$ is such that, for all $n<\omega$, $\sigma(n)\geq n$, then $\prod_{n<\omega}\aleph_{\omega \cdot (n+1) + \sigma(n) + 3}$ carries a bad scale of length $\aleph_{\omega^2+1}$.}
\end{itemize}

Also, just as in Section 5, the bad scales in $\prod_{n<\omega}\aleph_{\omega \cdot (n+1) + \sigma(n) + 2}$ in $V[G*J]$ have stationarily many points of cofinality $\aleph_{\omega+1}$ where there are eubs $g$ such that, for all $n<\omega$, $\mathrm{cf}(g(n)) = \aleph_{\sigma(n)+1}$. Finally, as before, if we force over $V[G*J]$ with $\mathrm{Coll}(\aleph_{\omega^2+1}, \aleph_{\omega^2+2})$, then all scales in $V[G*J]$ remain scales (now all of length $\aleph_{\omega^2+1}$) in the further extension. The bad scales remain bad, the very good scales remain good, and all scales which were of length $\aleph_{\omega^2+2}$ in $V[G*J]$ are also very good in the further extension. 

\section{The Good Ideal}

We now turn to the third question raised by Cummings and Foreman in \cite{cummingsforeman}. We first give some background.

Let $\kappa$ be a singular cardinal, and let $A$ be a progressive set of regular cardinals cofinal in $\kappa$. Let $I$ be the collection of $B\subseteq A$ such that either $B$ is bounded or $\prod B$ carries a scale of length $\kappa^+$. $I$ is easily seen to be an ideal, and one of the seminal results of PCF Theory is the fact \cite{shelah} that $I$ is singly generated, i.e. there is $B^* \subseteq A$ such that, for all $B\in I$, $B\subseteq^* B^*$, where $\subseteq^*$ denotes inclusion modulo bounded sets. Thus, if $\kappa$ is a singular cardinal that is not a cardinal fixed point, then there is a largest subset of the regular cardinals below $\kappa$, modulo bounded sets, which carries a scale of length $\kappa^+$. Such a set is called the first PCF generator.

Cummings and Foreman asked whether, when the first PCF generator exists, there is also a maximal set, again modulo bounded sets, which carries a good scale. This is the case in the models obtained above by diagonal supercompact Prikry forcing, which were also the first known models in which this set is nontrivial and different from the first PCF generator itself, i.e. in which certain sets carry good scales and others carry bad scales. For example, in our final model in Section 6, after forcing with $\mathrm{Coll}(\aleph_{\omega^2+1}, \aleph_{\omega^2+2})$, the largest subset of regular cardinals below $\aleph_{\omega^2}$ that carries a good scale is, modulo bounded subsets, $\{\aleph_{\omega \cdot n + m} \mid n\leq \omega, m<n+2 \}$. Does such a set always exist? In this section, we will only be concerned with singular cardinals of countable cofinality and with scales in $\prod A$, where $\mathrm{otp}(A) = \omega$. With this in mind, we make the following definition.

\begin{definition}
Suppose $\kappa$ is a singular cardinal of countable cofinality. Then $I_{gd}[\kappa]$ is the collection of $A\subseteq \kappa$ such that $A$ is a set of regular cardinals and either $A$ is finite or $\mathrm{otp}(A) = \omega$ and $\prod A$ carries a good scale of length $\kappa^+$.
\end{definition}

It is easily seen that $I_{gd}[\kappa]$ is an ideal. A question related to that of Cummings and Foreman is whether or not $I_{gd}[\kappa]$ is a P-ideal, i.e. whether or not, given $\langle A_n \mid n<\omega \rangle$ such that, for all $n<\omega$, $A_n \in I_{gd}[\kappa]$, there exists $A\in I_{gd}[\kappa]$ such that, for all $n<\omega$, $A_n \subseteq^* A$. We do not answer this question, but we provide some partial results. In the first part of this section, we analyze individual good and bad points in scales, proving that there are consistently local obstacles to proving that $I_{gd}[\kappa]$ is a P-ideal (though we do not know whether this pathological behavior can consistently occur simultaneously at enough points to provide an actual counterexample to $I_{gd}[\kappa]$ being a P-ideal). In the second part of the section, we show that, after forcing with a finite-support iteration of Hechler forcing of length $\omega_1$, $I_{gd}[\kappa]$ is necessarily a P-ideal.

Let $\langle A_n \mid n<\omega \rangle$ be a sequence of elements of $I_{gd}[\kappa]$ and suppose, without loss of generality, that each $A_n$ is infinite. Then, by results of Shelah, there is $A\subseteq \kappa$ of order type $\omega$ such that $\prod A$ carries a scale of length $\kappa^+$ and, for all $n<\omega$, $A_n \subseteq^* A$. Again without loss of generality, by adjusting the $A_n$'s if necessary, we may assume that $A = \bigcup A_n$ and, for all $n<\omega$, $A_n \subseteq A_{n+1}$. 

Let $\overrightarrow{f} = \langle f_\alpha \mid \alpha < \kappa^+ \rangle$ be a scale in $\prod A$. For $B\subseteq A$, let $\overrightarrow{f}^B$ denote $\langle f_\alpha \restriction B \mid \alpha < \kappa^+ \rangle$. If $B$ is infinite, then $\overrightarrow{f}^B$ is a scale in $\prod B$. Also, since, for each $n<\omega$, $\prod A_n$ carries a good scale, we know that $\overrightarrow{f}^{A_n}$ is itself a good scale. Thus, for each $n<\omega$, there is a club $C_n \subseteq \kappa^+$ such that for every $\beta \in C_n$ of uncountable cofinality, $\beta$ is good for $\overrightarrow{f}^{A_n}$. Intersecting the clubs, there is a club $C\subseteq \kappa^+$ such that, for every $n<\omega$ and every $\beta \in C$ of uncountable cofinality, $\beta$ is good for $\overrightarrow{f}^{A_n}$.

We want to know if there is $B\subseteq A$ such that $\prod B$ carries a good scale and, for all $n<\omega$, $A_n \subseteq^* B$. We first take a local view and focus on individual good points. Suppose that $\beta \in C$ has uncountable cofinality. For $n<\omega$, let $g_n \in \prod A_n$ be an eub for $\overrightarrow{f}^{A_n} \restriction \beta$ such that, for all $i\in A_n$, $\mathrm{cf}(g_n(i)) = \mathrm{cf}(\beta)$. By uniqueness of eubs, if $m < n$, then $g_n \restriction A_m =^* g_m$. Define $g\in {^A\mathrm{On}}$ by letting $g\restriction A_0 = g_0$ and, for all $n<\omega$, $g\restriction (A_{n+1} \setminus A_n) = g_{n+1} \restriction (A_{n+1} \setminus A_n)$. Then we have that $g\restriction A_n =^* g_n$ for all $n<\omega$.

By uniqueness of eubs, if $B^* \subseteq A$, $A_n \subseteq^* B^*$ for all $n<\omega$, and $g^*$ is an eub for $\overrightarrow{f}^{B^*}\restriction \beta$, then, for all $n<\omega$, $g^* \restriction (A_n \cap B^*) =^* g_n \restriction (A_n \cap B^*)$. Thus, if there is such a $B^*$ and $g^*$, then there is a $B\subseteq A$ such that $A_n \subseteq^* B$ for all $n<\omega$ and $g\restriction B$ is an eub for $\overrightarrow{f}^B \restriction \beta$. We now turn to the question of when such a $B$ exists.

We note that a subset of $A$ that almost contains all of the $A_n$'s can be specified by an element of ${^\omega}\omega$. Namely, if each $A_n$ is enumerated in increasing order as $\langle i^n_k \mid k < \omega \rangle$ and $\sigma \in {^\omega}\omega$, let \[B_\sigma = \bigcup_{n<\omega} A_n \setminus i^n_{\sigma(n)}.\] Each $B_\sigma$ is a subset of $A$ that almost contains all of the $A_n$'s, and each subset of $A$ that almost contains all of the the $A_n$'s contains a set of the form $B_\sigma$.

With this in mind, it is not surprising that cardinal characteristics of the continuum have an impact on the situation. Recall that $\mathfrak{b}$, or the {\it bounding number}, is the size of the smallest family of functions that is unbounded in $({^\omega}\omega, <^*)$. $\mathfrak{d}$, or the {\it dominating number}, is the size of the smallest family of functions that is cofinal in $({^\omega}\omega, <^*)$.

For simplicity, when considering questions about the local behavior of scales, we will without loss of generality think of the $A_n$'s as being subsets of $\omega$ and consider increasing sequences of functions in ${^A}\mathrm{On}$.

\begin{lemma}
 Let $\lambda$ be a regular, uncountable cardinal. Let $\langle A_n \mid n<\omega \rangle$ be such that each $A_n$ is an infinite subset of $\omega$ and, for all $m < n$, $A_m \subseteq A_n$. Let $A=\bigcup_{n<\omega} A_n$. Let $\overrightarrow{f} = \langle f_\alpha \mid \alpha < \lambda \rangle$ be a $<^*$-increasing sequence of functions in ${^A}\mathrm{On}$ such that, for every $n<\omega$, $\overrightarrow{f}^{A_n} = \langle f_\alpha \restriction A_n \mid \alpha < \lambda \rangle$ has an eub of uniform cofinality $\lambda$. If either $\lambda < \mathfrak{b}$ or $\lambda > \mathfrak{d}$, then there is $B\subseteq A$ such that, for all $n$, $A_n \subseteq^* B$ and $\overrightarrow{f}^B$ has an eub of uniform cofinality $\lambda$.
\end{lemma}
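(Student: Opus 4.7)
The plan is to glue the eubs $g_n$ into a candidate function $g$ on $A$, then cut down to a suitable $B \subseteq A$ using the cardinal invariant hypothesis. Define $g \in {}^A\mathrm{On}$ by $g \restriction A_0 := g_0$ and $g(i) := g_{n+1}(i)$ for $i \in A_{n+1} \setminus A_n$; by uniqueness of eubs modulo finite, $g \restriction A_n =^* g_n$ and every $g(i)$ has cofinality $\lambda$. For each $n$ fix, via Proposition \ref{goodprop}, a $<$-increasing sequence $\langle h^n_\xi \mid \xi < \lambda \rangle$ in $\prod A_n$ with pointwise sup $g_n$ that is cofinally interleaved with $\overrightarrow{f}^{A_n}$. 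Enumerate $A_n$ in increasing order as $\langle i^n_k \mid k < \omega \rangle$ and introduce two families in ${}^\omega\omega$: for $\alpha < \lambda$, $\rho_\alpha(n)$ is the least $k$ with $f_\alpha(i^n_{k'}) < g(i^n_{k'})$ for all $k' \geq k$; and for $\xi < \lambda$, having picked $\alpha_\xi < \lambda$ with $h^n_\xi <^* f_{\alpha_\xi}$ on every $A_n$ (possible by regularity of $\lambda$ and the cofinal interleaving), let $\nu_\xi(n)$ be the least $k$ with $h^n_\xi(i^n_{k'}) < f_{\alpha_\xi}(i^n_{k'})$ for all $k' \geq k$.

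Choose $\sigma \in {}^\omega\omega$ as follows. In Case 1 ($\lambda < \mathfrak{b}$) the family $\{\rho_\alpha, \nu_\xi \mid \alpha, \xi < \lambda\}$ has size at most $\lambda < \mathfrak{b}$, hence is $<^*$-bounded by some $\sigma$. In Case 2 ($\lambda > \mathfrak{d}$) fix a dominating family $\{d_\eta \mid \eta < \mathfrak{d}\}$; since $\lambda$ is regular and $> \mathfrak{d}$, pigeonhole yields $d_{\eta_1}$ dominating $\rho_\alpha$ for $\alpha$ in an unbounded $X \subseteq \lambda$ and $d_{\eta_2}$ dominating $\nu_\xi$ for $\xi$ in an unbounded $Y \subseteq \lambda$, and we take $\sigma := \max(d_{\eta_1}, d_{\eta_2})$ pointwise. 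Set $B := \bigcup_{n < \omega}\bigl((A_n \setminus A_{n-1}) \cap [i^n_{\sigma(n)}, \infty)\bigr)$ with $A_{-1} := \emptyset$. Since $A_n \setminus B$ is contained in a finite union of finite sets, $A_n \subseteq^* B$ for every $n$; moreover every $i \in B$ lies in a unique level set $A_{n(i)} \setminus A_{n(i)-1}$ with $A_{n(i)}$-index $j_{n(i)} \geq \sigma(n(i))$.

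To see $g \restriction B$ is an upper bound, fix $\alpha < \lambda$: in Case 1 directly, and in Case 2 by $<^*$-monotonicity of $\rho$ in $\alpha$ (immediate from $f_\alpha <^* f_{\alpha'}$) together with cofinality of $X$, we have $\sigma \geq^* \rho_\alpha$, so for all but finitely many $i \in B$, $j_{n(i)} \geq \sigma(n(i)) \geq \rho_\alpha(n(i))$ gives $f_\alpha(i) < g(i)$. For the eub property, given $h <^* g \restriction B$, use $g(i) = \sup_\xi h^{n(i)}_\xi(i)$ to pick a least $\xi(i) < \lambda$ with $h(i) < h^{n(i)}_{\xi(i)}(i)$ for each relevant $i$; because $|B| = \aleph_0 < \lambda$, $\xi^* := \sup_i \xi(i) < \lambda$. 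In Case 2 replace $\xi^*$ by any $\xi' \in Y$ above it, legal because $h^n$ is pointwise increasing in $\xi$. Then $\sigma \geq^* \nu_{\xi^*}$, so for all but finitely many $i \in B$, $j_{n(i)} \geq \sigma(n(i)) \geq \nu_{\xi^*}(n(i))$ forces $h^{n(i)}_{\xi^*}(i) < f_{\alpha_{\xi^*}}(i)$ and hence $h(i) < f_{\alpha_{\xi^*}}(i)$. Combined with pointwise cofinality $\lambda$ of $g$, Proposition \ref{goodprop} exhibits $g \restriction B$ as the desired eub of uniform cofinality $\lambda$.

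The main obstacle is the definition of $B$: the naive choice $\bigcup_n A_n \setminus i^n_{\sigma(n)}$ admits $i$ of small canonical level via large-$m$ witnesses, after which the crucial inequality $j_{n(i)} \geq \sigma(n(i))$ can fail and the bad-set estimates collapse. The partition-based $B$ above restores this inequality pointwise, which is exactly what the threshold bounds $\sigma \geq^* \rho_\alpha$ and $\sigma \geq^* \nu_{\xi^*}$ need. A secondary subtlety in Case 2 is that $\nu_\xi$ is not obviously monotonic in $\xi$; we sidestep this by enlarging $\xi^*$ into the pigeonhole-unbounded set $Y$ rather than demanding that a single $\sigma$ uniformly dominate every $\nu_\xi$.
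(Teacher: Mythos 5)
Your argument is correct in substance and follows the same overall strategy as the paper: glue the eubs $g_n$ into a single $g$, encode the relevant ``tail thresholds'' as functions in ${}^\omega\omega$, and use $\lambda<\mathfrak{b}$ (a single bound) or $\lambda>\mathfrak{d}$ (pigeonhole on a dominating family plus monotonicity/unboundedness) to find one $\sigma$ that works for all $\alpha$ and $\xi$ simultaneously. The one genuine divergence is your choice of \emph{per-level} interleaved families $\langle h^n_\xi\mid\xi<\lambda\rangle$. That choice is what creates the ``canonical level'' problem you describe and forces your partition-based $B$. The paper sidesteps this entirely: it fixes, for each $i\in A$, a single sequence $\langle\gamma^i_\xi\mid\xi<\lambda\rangle$ cofinal in $g(i)$ and sets $h_\xi(i)=\gamma^i_\xi$, so the interleaving functions are \emph{global} on $A$ and the inequality $f_\alpha(i)<h_{\xi_\alpha}(i)$ obtained from any witnessing level $n$ is a statement about values at $i$ that does not depend on which $n$ put $i$ into $B_\sigma$. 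With that set-up the ``naive'' $B_\sigma=\bigcup_n A_n\setminus i^n_{\sigma(n)}$ works directly, so your main obstacle is an artifact of your construction rather than of the problem. Your version is still valid, but note one quantifier slip: for a fixed $\alpha$, the set of $i\in B$ whose canonical level $n(i)$ lies below the threshold $N$ where $\sigma$ begins to dominate $\rho_\alpha$ is $B\cap A_{N-1}$, which is \emph{infinite}, so it is not literally true that $j_{n(i)}\ge\sigma(n(i))\ge\rho_\alpha(n(i))$ for all but finitely many $i\in B$. The conclusion survives because on those finitely many low levels you already have $f_\alpha\restriction A_{N-1}<^* g\restriction A_{N-1}$ (and likewise $h^n_{\xi^*}\restriction A_n<^* f_{\alpha_{\xi^*}}\restriction A_n$ for each $n<N$ in the eub direction), and a finite union of cofinite-in-$A_n$ sets handles the exceptions; you should say this explicitly. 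With that repair, and with Proposition \ref{goodprop} applied at the end exactly as you indicate, the proof goes through.
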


\begin{proof}
For $n<\omega$, let $g_n$ be an eub of uniform cofinality $\lambda$ for $\overrightarrow{f}^{A_n}$. Define $g\in {^A}\mathrm{On}$ as before so that, for all $i\in A$, $\mathrm{cf}(g(i)) = \lambda$ and, for all $n<\omega$, $g\restriction A_n =^* g_n$.

For each $i\in A$, let $\langle \gamma^i_\xi \mid \xi < \lambda \rangle$ be increasing and cofinal in $g(i)$ and, for $\xi < \lambda$, define $h_\xi \in {^A\mathrm{On}}$ by $h_\xi(i) = \gamma^i_\xi$. 

We first consider the case $\lambda < \mathfrak{b}$. For each $\alpha < \lambda$ and each $n<\omega$, there is $\xi < \lambda$ such that $f_\alpha \restriction A_n <^* h_\xi \restriction A_n$. Thus, there is $\xi_\alpha < \lambda$ such that, for all $n<\omega$, $f_\alpha \restriction A_n <^* h_{\xi_\alpha} \restriction A_n$. Define $\sigma_\alpha \in {^\omega \omega}$ by letting $\sigma_\alpha(n)$ be the least $i$ such that $f_\alpha \restriction (A_n \setminus i) < h_{\xi_\alpha} \restriction (A_n \setminus i)$.

Similarly, for each $\xi < \lambda$ and each $n<\omega$, there is $\alpha < \lambda$ such that $h_\xi \restriction A_n <^* f_\alpha \restriction A_n$, so there is $\alpha_\xi < \lambda$ such that, for all $n<\omega$, $h_\xi \restriction A_n <^* f_{\alpha_\xi} \restriction A_n$. Define $\tau_\xi \in {^\omega \omega}$ by letting $\tau_\xi(n)$ be the least $i$ such that $h_\xi \restriction (A_n \setminus i) < f_{\alpha_\xi} \restriction (A_n \setminus i)$.

Since $\lambda < \mathfrak{b}$, we can find $\sigma \in {^\omega \omega}$ such that for all $\alpha, \xi < \lambda$, we have $\sigma_\alpha, \tau_\xi <^* \sigma$. Let $B = B_\sigma$. We claim that this $B$ is as desired. To see this, let $\alpha < \lambda$. $f_\alpha \restriction B_{\sigma_\alpha} < h_{\xi_\alpha} \restriction B_{\sigma_\alpha}$. But, since $\sigma_\alpha <^* \sigma$, we know that $B \subseteq^* B_{\sigma_\alpha}$, so $f_\alpha \restriction B <^* h_{\xi_\alpha} \restriction B$. By the same argument, for each $\xi < \lambda$, $h_\xi \restriction B <^* f_{\alpha_\xi} \restriction B$. Thus, $\langle h_\xi \restriction B \mid \xi < \lambda \rangle$ is cofinally interleaved with $\overrightarrow{f}^B$, so, by Proposition \ref{goodprop}, $g \restriction B$ is an eub for $\overrightarrow{f}^B$.

Now suppose that $\lambda > \mathfrak{d}$. Let $\mathcal{F}$ be a family of functions cofinal in ${^\omega}\omega$ such that $|\mathcal{F}| = \mathfrak{d}$. As above, for each $\alpha < \lambda$, we can find $\xi_\alpha < \lambda$ and $\sigma_\alpha \in {^\omega}\omega$ such that $f_\alpha \restriction B_{\sigma_\alpha} <^* h_{\xi_\alpha} \restriction B_{\sigma_\alpha}$, but we now also require that $\sigma_\alpha \in \mathcal{F}$. This is possible because $\mathcal{F}$ is cofinal in ${^\omega}\omega$. Since $\lambda > \mathfrak{d}$, there is $\sigma^* \in \mathcal{F}$ such that $\sigma_\alpha = \sigma^*$ for cofinally many $\alpha < \lambda$. But, since $\overrightarrow{f}$ is $<^*$-increasing, it is actually the case that, for every $\alpha < \lambda$, there is $\xi^*_\alpha < \lambda$ such that $f_\alpha \restriction B_{\sigma^*} <^* h_{\xi^*_\alpha} \restriction B_{\sigma^*}$.

Similarly, we can find $\tau^* \in \mathcal{F}$ such that, for every $\xi < \lambda$, there is $\alpha^*_\xi < \lambda$ such that $h_\xi \restriction B_{\tau^*} <^* f_{\alpha^*_\xi} \restriction B_{\tau^*}$. Let $\sigma > \sigma^*, \tau^*$, and let $B = B_\sigma$. It is easily seen as before that $\langle h_\xi \restriction B \mid \xi < \lambda \rangle$ is cofinally interleaved with $\overrightarrow{f}^B$, so $g \restriction B$ is an eub for $\overrightarrow{f}^B$.
\end{proof}

Now suppose that $\mathfrak{b} \leq \lambda \leq \mathfrak{d}$. Let $\langle A_n \mid n<\omega \rangle$ be a $\subseteq$-increasing sequence of infinite subsets of $\omega$, let $A = \bigcup A_n$, and let $\overrightarrow{f} = \langle f_\alpha \mid \alpha < \lambda \rangle$ be a $<^*$-increasing sequence of functions in ${^A}\mathrm{On}$ such that, for each $n<\omega$, $\overrightarrow{f}^{A_n}$ has an eub, $g_n$, of uniform cofinality $\lambda$. Let $g\in {^A}\mathrm{On}$ be of uniform cofinality $\lambda$ such that, for all $n<\omega$, $g\restriction A_n =^* g_n$. We would like to understand how $g$ can fail to be an eub. 

For each $i\in A$, let $\langle \gamma^i_\xi \mid \xi < \lambda \rangle$ be increasing and cofinal in $g(i)$ and, for $\xi < \lambda$, let $h_\xi \in {^A}\mathrm{On}$ be defined by $h_\xi(i) = \gamma^i_\xi$. For $\sigma \in {^\omega}\omega$, the statement that $g\restriction B_\sigma$ is an eub for $\overrightarrow{f}^{B_\sigma}$ is equivalent to the statement that $\langle h_\xi \restriction B_\sigma \mid \xi < \lambda \rangle$ is cofinally interleaved in $\overrightarrow{f}^{B_\sigma}$. There are two ways in which this could fail to happen:
\begin{enumerate}
\item{There is $\alpha < \lambda$ such that, for every $\xi < \lambda$, $f_\alpha \restriction B_\sigma \not<^* h_\xi \restriction B_\sigma$.}
\item{There is $\xi < \lambda$ such that, for every $\alpha < \lambda$, $h_\xi \restriction B_\sigma \not<^* f_\alpha \restriction B_\sigma$.}
\end{enumerate}
Thus, if $g\restriction B_\sigma$ fails to be an eub for $\overrightarrow{f}^{B_\sigma}$ for all $\sigma \in {^\omega}\omega$, one of the above two situations must occur for a $<^*$-cofinal set of ${^\omega}\omega$, so in fact one of them must occur for all $\sigma \in {^\omega}\omega$.

We first concentrate on case 1.

\begin{lemma}
Let $\sigma \in {^\omega}\omega$. Suppose $\alpha < \lambda$ is such that, for every $\xi < \lambda$, $f_\alpha \restriction B_\sigma \not<^* h_\xi \restriction B_\sigma$. Then $f_\alpha \restriction B_\sigma \not<^* g \restriction B_\sigma$.
\end{lemma}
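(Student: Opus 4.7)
The plan is to prove the contrapositive: assuming that $f_\alpha \restriction B_\sigma <^* g \restriction B_\sigma$, I will produce a single $\xi^* < \lambda$ such that $f_\alpha \restriction B_\sigma <^* h_{\xi^*} \restriction B_\sigma$, contradicting the hypothesis on $\alpha$.

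So suppose for contradiction that there is $i_0 < \omega$ such that $f_\alpha(i) < g(i)$ for every $i \in B_\sigma \setminus i_0$. For each such $i$, by construction $g(i) = \sup_{\xi < \lambda} \gamma^i_\xi$ with $\mathrm{cf}(g(i)) = \lambda$, so there must exist some $\xi_i < \lambda$ with $f_\alpha(i) < \gamma^i_{\xi_i} = h_{\xi_i}(i)$. Now the key observation is that $B_\sigma \subseteq \omega$ is countable and $\lambda$ is regular and uncountable, so $\xi^* := \sup\{\xi_i \mid i \in B_\sigma \setminus i_0\}$ is strictly below $\lambda$. Since each sequence $\langle \gamma^i_\xi \mid \xi < \lambda \rangle$ is increasing, for all $i \in B_\sigma \setminus i_0$ we have $f_\alpha(i) < \gamma^i_{\xi_i} \leq \gamma^i_{\xi^*} = h_{\xi^*}(i)$. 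Hence $f_\alpha \restriction B_\sigma <^* h_{\xi^*} \restriction B_\sigma$, contradicting the assumption that no $h_\xi$ eventually dominates $f_\alpha$ on $B_\sigma$.

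The only nontrivial point in the argument is the use of the regularity of $\lambda$ together with the countability of $B_\sigma$ to bound the collection $\{\xi_i\}$ by a single $\xi^* < \lambda$; everything else follows immediately from the definitions of $g$ and the $h_\xi$'s. Note that uncountable regularity of $\lambda$ is guaranteed in the ambient setting, since we are in the regime $\mathfrak{b} \leq \lambda \leq \mathfrak{d}$ and $\mathfrak{b}$ is uncountable. No deeper PCF machinery is needed here.
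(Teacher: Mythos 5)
Your proof is correct and is essentially identical to the paper's: both argue by contradiction/contraposition, pick $\xi_i$ with $f_\alpha(i) < h_{\xi_i}(i)$ for each relevant $i \in B_\sigma$, and take the supremum of the countably many $\xi_i$, which stays below the regular uncountable $\lambda$. Your explicit remark about why the supremum is below $\lambda$ just makes visible a step the paper leaves implicit.
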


\begin{proof}
Suppose for sake of contradiction that $f_\alpha \restriction B_\sigma <^* g\restriction B_\sigma$. Let $k<\omega$ be such that $f_\alpha(i) < g(i)$ for all $i\in B_\sigma \setminus k$. For $i \in B_\sigma \setminus k$, let $\xi_i < \lambda$ be such that $f_\alpha(i) < h_{\xi_i}(i)$. Let $\xi = \sup_{i\in B_\sigma \setminus k} \xi_i$. Then $f_\alpha(i) < h_\xi(i)$ for all $i\in B_\sigma \setminus k$, so $f_\alpha \restriction B_\sigma <^* h_\xi \restriction B_\sigma$.
\end{proof}

Thus, we are in case 1 if and only if $g\restriction B_\sigma$ is not in fact an upper bound for $\overrightarrow{f}^{B_\sigma}$. We now show that, if $\lambda = \mathfrak{b} = \omega_1$, this can hold simultaneously for all $\sigma \in {^\omega}\omega$. Without loss of generality, in what follows we assume that $A = \omega$.

\begin{lemma}
Suppose $\mathfrak{b} = \omega_1$. Let $\langle A_n \mid n<\omega \rangle$ be such that, for each $n$, $A_n \subseteq A_{n+1}$, $A_{n+1} \setminus A_n$ is infinite, and $\bigcup_{n<\omega} A_n = \omega$. There is a sequence of functions $\overrightarrow{f} = \langle f_\alpha \mid \alpha < \omega_1 \rangle$, $<^*$-increasing in ${^\omega}\mathrm{On}$, such that, for every $n<\omega$, $\overrightarrow{f}^{A_n}$ has an eub, $g_n$, of uniform cofinality $\omega_1$ but, letting $g$ be such that $g\restriction A_n =^* g_n$ for every $n<\omega$, for every $\sigma \in {^\omega}\omega$, $g \restriction B_\sigma$ is not an upper bound for $\overrightarrow{f}^{B_\sigma}$.
\end{lemma}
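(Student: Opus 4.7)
The plan is to build $\overrightarrow{f}$ from a $<^*$-increasing, $<^*$-unbounded sequence $\langle u_\alpha \mid \alpha < \omega_1 \rangle$ in ${}^\omega \omega$ (available since $\mathfrak{b} = \omega_1$) by letting each $u_\alpha$ control a sparse set $E_\alpha \subseteq \omega$ on which $f_\alpha$ takes a distinguished large value. The $E_\alpha$ will be arranged so that $E_\alpha \cap A_n$ is finite for every $n$ (so each $\overrightarrow{f}^{A_n}$ admits the common eub $g_n \equiv \omega_1$), yet no $B_\sigma$ is $<^*$-bounded by the constant $\omega_1$: I will show that for every $\sigma$ some $E_\alpha$ meets $B_\sigma$ infinitely often.

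Concretely, write $D_n = A_n \setminus A_{n-1}$ (with $A_{-1} = \emptyset$; WLOG each $D_n$ is infinite), enumerate each $D_n$ in increasing order as $\{d^n_j \mid j < \omega\}$, and put
\[
E_\alpha = \{d^n_j \mid n < \omega, \; j \leq u_\alpha(n)\}, \qquad f_\alpha(i) = \begin{cases} \omega_2 + \alpha & \text{if } i \in E_\alpha, \\ \omega \cdot \alpha & \text{if } i \notin E_\alpha. \end{cases}
\]
For $\alpha < \beta$, a quick case check on the four combinations of membership in $E_\alpha, E_\beta$ shows $\{i \mid f_\alpha(i) \not< f_\beta(i)\} = E_\alpha \setminus E_\beta$, which is finite since $u_\alpha <^* u_\beta$; hence $\overrightarrow{f}$ is $<^*$-increasing in ${}^\omega \mathrm{On}$. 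Since $|E_\alpha \cap A_n| = \sum_{m \leq n}(u_\alpha(m)+1)$ is finite, $f_\alpha \restriction A_n$ agrees with $\omega \cdot \alpha$ on a cofinite subset of $A_n$, and the countability of $A_n$ together with the unbounded growth of $\omega \cdot \alpha$ shows that $g_n \equiv \omega_1$ is an eub of uniform cofinality $\omega_1$.

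For the key conclusion, fix any $g$ with $g \restriction A_n =^* \omega_1$ for every $n$ and any $\sigma \in {}^\omega\omega$. The set $X = \{i \mid g(i) \neq \omega_1\}$ satisfies $X \cap A_n$ finite for every $n$, so $B_\sigma \setminus X$ almost contains each $A_n$ and hence contains $B_{\sigma'}$ for some $\sigma' \in {}^\omega\omega$. The $<^*$-unboundedness of $\langle u_\alpha \rangle$ yields $\alpha$ with $u_\alpha(n) \geq \sigma'(n)$ for infinitely many $n$; for each such $n$ and any $j$ with $\sigma'(n) \leq j \leq u_\alpha(n)$ the point $d^n_j$ lies in $E_\alpha$ (so $f_\alpha(d^n_j) = \omega_2 + \alpha$) and has position at least $\sigma'(n)$ in $A_n$ (so $d^n_j \in B_{\sigma'} \subseteq B_\sigma \setminus X$, whence $g(d^n_j) = \omega_1$). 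These infinitely many witnesses show $f_\alpha \restriction B_\sigma \not<^* g \restriction B_\sigma$, so $g \restriction B_\sigma$ is not an upper bound for $\overrightarrow{f}^{B_\sigma}$.

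I expect the main obstacle to be keeping $\overrightarrow{f}$ $<^*$-increasing while each $E_\alpha$ remains infinite: if we naively set $f_\alpha = \omega_1$ on $E_\alpha$ with near-disjoint $E_\alpha$'s, monotonicity fails on the infinite set $E_\alpha \setminus E_\beta$. The fix is to force $\subseteq^*$-nesting of the $E_\alpha$'s via the $<^*$-nesting of the $u_\alpha$'s, combined with the two-tier stratification $\omega_2 + \alpha$ vs.\ $\omega \cdot \alpha$, which orders coordinates uniformly both inside and outside $E_\alpha$. Handling arbitrary coherent $g$ rather than just $g \equiv \omega_1$ is a secondary subtlety, absorbed by passing from $B_\sigma$ to $B_{\sigma'} \subseteq B_\sigma \setminus X_g$.
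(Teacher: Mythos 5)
Your proof is correct and follows essentially the same strategy as the paper: an unbounded $<^*$-increasing family in ${}^\omega\omega$ is used to produce a $\subseteq^*$-increasing sequence of sets that meet each $A_n$ finitely but meet each $B_\sigma$ infinitely often (for suitable $\alpha$), and the two-tier definition of $f_\alpha$ (a small value off the set, a value above $\omega_1$ on it) is exactly the paper's device. Your explicit, non-recursive definition $E_\alpha = \{d^n_j \mid j \le u_\alpha(n)\}$ neatly replaces the paper's transfinite recursion constructing the sets $X_\alpha$ (including its limit stage), and your reduction of an arbitrary coherent $g$ to the constant function via the set $X = \{i \mid g(i) \neq \omega_1\}$ is a careful touch the paper elides.
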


\begin{proof}
Fix a $<^*$-increasing, unbounded sequence $\langle \sigma_\alpha \mid \alpha < \omega_1 \rangle$ in ${^\omega}\omega$. We first construct a useful sequence of subsets of $\omega$.

\begin{claim}
There is a sequence $\langle X_\alpha \mid \alpha < \omega_1 \rangle$ such that, for every $\alpha < \beta < \omega_1$,
\begin{enumerate}
\item{$X_\alpha \subseteq \omega$ and $X_\alpha \subseteq^* X_\beta$.}
\item{For all $n<\omega$, $X_\alpha \cap A_n$ is finite.}
\item{For all $n<\omega$ $X_{\alpha+1} \cap (A_{n+1} \setminus (A_n \cup \sigma_{\alpha}(n+1)))$ is nonempty.}
\end{enumerate}
\end{claim}

\begin{proof}
We construct $\langle X_\alpha \mid \alpha < \omega_1 \rangle$ by recursion on $\alpha$. Let $X_0 = \emptyset$. Given $X_\alpha$, let $X_{\alpha+1} = X_\alpha \cup \{\min(A_{n+1} \setminus (A_n \cup \sigma_\alpha(n+1))) \mid n<\omega \}$. Requirement 1 is clearly satisfied. For each $n<\omega$, $|X_{\alpha+1} \cap A_n | \leq |X_\alpha \cap A_n| + n$, so requirement 2 is satisfied as well. Finally, since, for all $n$, $A_{n+1} \setminus A_n$ is infinite, $X_{\alpha+1}$ contains an element of $A_{n+1} \setminus (A_n \cup \sigma_{\alpha}(n+1))$, so requirement 3 is satisfied.

If $\beta < \omega_1$ is a limit ordinal, then we just need $X_\beta$ to satisfy requirements 1 and 2. Fix a bijection $\tau_\beta :\omega \rightarrow \beta$, and let \[X_\beta = \bigcup_{n<\omega}(X_{\tau_\beta(n)} \setminus \bigcup_{k<n}A_k).\]
For $\alpha < \beta$, if $\alpha = \tau_\beta(n)$, then, since $X_\alpha \cap A_k$ is finite for all $k$, $X_\alpha \subseteq^* X_\alpha \setminus (\bigcup_{k<n} A_k) \subseteq X_\beta$, so $X_\alpha \subseteq^* X_\beta$. Also, for all $n<\omega$, \[X_\beta \cap A_n \subseteq (\bigcup_{k\leq n} X_{\tau_\beta(k)}) \cap A_n,\] which is finite. Thus, $X_\beta$ satisfies 1 and 2 as desired, completing the construction.
\end{proof}

We now use the sequence $\langle X_\alpha \mid \alpha < \omega_1 \rangle$ to construct the desired sequence of functions $\overrightarrow{f}$. For $\alpha < \omega_1$ and $k<\omega$, let \[f_\alpha(k) = 
\begin{cases}
\alpha & \text{if } k\not\in X_\alpha \\ 
\omega_1 + \alpha & \text{if } k\in X_\alpha
\end{cases}\]

\begin{claim}
$\overrightarrow{f}$ is $<^*$-increasing.
\end{claim}

\begin{proof}
Let $\alpha < \beta < \omega_1$. Then $f_\beta(k) \leq f_\alpha(k)$ if and only if $k \in X_\alpha \setminus X_\beta$. But $X_\alpha \subseteq^* X_\beta$, so this only happens for finitely many values of $k$. Thus, $f_\alpha <^* f_\beta$.
\end{proof}

For an ordinal $\gamma$, let $c_\gamma$ denote the constant function with domain $\omega$ taking value $\gamma$ everywhere.

\begin{claim}
For every $n<\omega$, $c_{\omega_1} \restriction A_n$ is an eub for $\overrightarrow{f}^{A_n}$.
\end{claim}

\begin{proof}
For every $\alpha < \omega_1$, $X_\alpha \cap A_n$ is finite, so $f_\alpha \restriction A_n =^* c_\alpha \restriction A_n$. Since $c_{\omega_1} \restriction A_n$ is an eub for $\langle c_\alpha \restriction A_n \mid \alpha < \omega_1 \rangle$, it is also an eub for $\overrightarrow{f}^{A_n}$.
\end{proof}

\begin{claim}
For every $\sigma \in {^\omega}\omega$, $c_{\omega_1} \restriction B_\sigma$ is not an upper bound for $\overrightarrow{f}^{B_\sigma}$.
\end{claim}

\begin{proof}
Fix $\sigma \in {^\omega}\omega$, and find $\alpha < \omega_1$ such that $\sigma_\alpha \not<^* \sigma$. Then $Y = \{n \mid \sigma(n+1) \leq \sigma_\alpha(n+1) \}$ is infinite and, for every $n\in Y$, $X_{\alpha+1}$ contains an element of $A_{n+1} \setminus (A_n \cup \sigma_\alpha(n+1))$. Thus, $X_{\alpha+1} \cap B_\sigma$ is infinite. Since, for all $k\in X_{\alpha+1}$, $f_{\alpha+1}(k) > \omega_1$, we have $f_{\alpha+1} \restriction B_\sigma \not<^* c_{\omega_1} \restriction B_\sigma$, so $c_{\omega_1} \restriction B_\sigma$ is not an upper bound for $\overrightarrow{f}^{B_\sigma}$.
\end{proof}
Thus, for all $\sigma \in {^\omega}\omega$, $\overrightarrow{f}^{B_\sigma}$ has no eub.
\end{proof}

We now turn our attention to case 2 and show that it too is consistently possible.

\begin{lemma}
Suppose $\mathfrak{d} = \omega_1$. Let $\langle A_n \mid n<\omega \rangle$ be such that, for each $n$, $A_n \subseteq A_{n+1}$, $A_{n+1} \setminus A_n$ is infinite, and $\bigcup_{n<\omega} A_n = \omega$. There is a sequence of functions $\overrightarrow{f} = \langle f_\alpha \mid \alpha < \omega_1 \rangle$, $<^*$-increasing in ${^\omega}\mathrm{On}$ such that, for every $n<\omega$, $\overrightarrow{f}^{A_n}$ has an eub but, defining $g$ and $\langle h_\xi \mid \xi<\omega_1 \rangle$ as above, for every $\sigma \in {^\omega}\omega$, there is $\xi_\sigma < \omega_1$ such that, for every $\alpha < \omega_1$, $h_{\xi_\sigma} \restriction B_\sigma \not<^* f_\alpha \restriction B_\sigma$.
\end{lemma}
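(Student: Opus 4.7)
The plan is to dualize the construction in the preceding lemma, using the dominating family supplied by $\mathfrak{d}=\omega_1$ in place of the unbounded family. I would fix a $<^*$-increasing dominating family $\langle \tau_\xi : \xi<\omega_1 \rangle$ in $({^\omega}\omega, <^*)$, and arrange $\overrightarrow{f}$ so that $\overrightarrow{f}^{A_n}$ has eub $c_{\omega_1}\restriction A_n$ of uniform cofinality $\omega_1$, so that $g = c_{\omega_1}$ and the canonical choice gives $h_\xi = c_\xi$ (the constant-$\xi$ function, as in the preceding proof). It then suffices to build the $f_\alpha$'s so that for each $\xi<\omega_1$ and each $\alpha<\omega_1$ the set $\{k\in B_{\tau_\xi} : f_\alpha(k)\le\xi\}$ is infinite. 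Granting this, for any $\sigma$ pick $\xi_\sigma$ with $\sigma<^*\tau_{\xi_\sigma}$ (possible by dominance); since $B_{\tau_{\xi_\sigma}}\subseteq^* B_\sigma$, the infinite ``dip set'' transfers to $B_\sigma$ modulo a finite discrepancy, giving $h_{\xi_\sigma}\restriction B_\sigma \not<^* f_\alpha\restriction B_\sigma$ for every $\alpha$.

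The $f_\alpha$'s would be constructed by transfinite recursion on $\alpha$, dually to the $X_\alpha$'s of the preceding lemma. Rather than maintain sets carrying a ``jump-up'' above $g$, one maintains sets carrying a ``dip-down'' below the $h_\xi$'s, with $f_\alpha(k)=\alpha$ off the dip set and a smaller ordinal (depending on when and why $k$ was declared a dip) on it. The dip positions would be placed in $B_{\tau_\xi}\cap(A_{n+1}\setminus A_n)$ for $(\xi,n)$ pairs scheduled via the dominating family, with a limit-stage diagonal treatment analogous to the one in the preceding proof (the bijection $\tau_\beta : \omega\to\beta$ argument) ensuring the monotonicity modulo finite sets of the dip complements.

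The main obstacle is the ``for every $\alpha$'' quantifier in the bad-behavior condition: a single dip position cannot make $f_\alpha(k)\le\xi$ hold for all $\alpha<\omega_1$ simultaneously without creating infinitely many $<^*$-pair conflicts through that position (since $\omega_1$-many increasing values cannot fit in a bounded countable set of ordinals). The resolution, paralleling the bookkeeping in the preceding lemma, is to spread dips across $\omega$-many positions per $\xi$ and to choose the dip values so that each pair $(\alpha,\beta)$ shares only finitely many conflicting positions; the eub condition on each $A_n$ is preserved because each dip set has finite intersection with each $A_n$. Verifying $<^*$-increase, the eub on each $A_n$, and the infinitude of the dip set on each $B_{\tau_\xi}$ then completes the proof.
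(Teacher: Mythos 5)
Your high-level plan --- dualize the previous construction by replacing the unbounded family with a dominating family $\langle \tau_\xi \mid \xi<\omega_1\rangle$ and build shrinking ``dip sets'' on which $f_\alpha$ falls below the $h_\xi$'s --- is indeed the strategy of the paper's proof. But there is a genuine gap, and it sits exactly at the point you flag as ``the main obstacle.'' You commit to $g=c_{\omega_1}$ and $h_\xi=c_\xi$, so every threshold $\gamma_\xi$ is a \emph{countable} ordinal, and you must then fit, at the dip positions, an a.e.-strictly-increasing $\omega_1$-sequence of values below a fixed countable ordinal. Your proposed resolution (``spread dips across $\omega$-many positions per $\xi$ and choose the dip values so that each pair shares only finitely many conflicting positions'') is not a construction; it is a restatement of what needs to be arranged, and it interacts badly with the other requirements. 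For instance, for each $\beta$ the set $W_\beta=\{k: f_\beta(k)<\omega_1\}$ is countable, so $\sup f_\beta[W_\beta]$ is a countable ordinal $\eta_\beta$, and for every $\xi$ with $\gamma_\xi\ge\eta_\beta$ the required dip set is exactly $W_\beta\cap B_{\tau_\xi}$; since the $\tau_\xi$'s are dominating, one must then ensure $W_\beta\cap B_\sigma$ is infinite for \emph{every} $\sigma$, which (because any set meeting each $A_n$ finitely is disjoint from $B_\rho$ for the $\rho$ recording the maximal indices of its traces on the $A_n$'s) forces $W_\beta\cap A_n$ to be infinite for some fixed $n$ and all $\beta$ --- in direct tension with $c_{\omega_1}\restriction A_n$ being an eub. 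None of this bookkeeping appears in the proposal.

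The paper avoids the obstacle rather than resolving it: it takes $g=c_{\omega_1^2}$ and $h_\xi=c_{\omega_1\cdot\xi}$, and at a dip position $k$ belonging to the set $X^\beta_\alpha$ (with $\alpha=\alpha^\beta_k$ least) it sets $f_\beta(k)=\omega_1\cdot\alpha+\beta$. The threshold relevant to $B_{\sigma_\alpha}$ is $\omega_1\cdot(\alpha+1)$, so the block $[\omega_1\cdot\alpha,\,\omega_1\cdot\alpha+\omega_1)$ has room for all $\omega_1$-many strictly increasing values as $\beta$ varies; there are no ``conflicting positions'' to manage, and for $\beta\le\alpha$ the non-domination is trivial because $f_\beta<c_{\omega_1\cdot(\alpha+1)}$ everywhere. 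What remains --- and what is the real technical content of the proof --- is the recursive construction of the doubly-indexed sets $X^\beta_\alpha$ satisfying, besides infiniteness of $X^\beta_\alpha\cap B_{\sigma_\alpha}$ and finiteness of $X^\beta\cap A_n$, the coherence condition that $\bigcup_{\alpha<\beta}(X^\beta_\alpha\setminus X^{\beta'}_\alpha)$ is finite for $\beta<\beta'$ (this is what makes $\overrightarrow{f}$ $<^*$-increasing). The limit stage of that recursion is substantially harder than the limit stage of the preceding lemma: one must simultaneously thin $\omega_1$-many $\subseteq^*$-decreasing columns while keeping each column's intersection with its $B_\alpha$ infinite, which the paper does via the interleaved $Y_\alpha$/$Y^*_\alpha$ construction. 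Your appeal to ``a limit-stage diagonal treatment analogous to the one in the preceding proof'' does not supply this.
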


\begin{proof}
Fix a $<^*$-increasing sequence $\langle \sigma_\alpha \mid \alpha < \omega_1 \rangle$ cofinal in ${^\omega}\omega$. For $\alpha<\omega_1$, let $B_\alpha$ denote $B_{\sigma_\alpha}$. 

\begin{claim}
There are subsets of $\omega$, $\langle X^\beta_\alpha \mid \alpha < \beta < \omega_1 \rangle$, such that, letting $X^\beta = \bigcup_{\alpha<\beta}X^\beta_\alpha$,
\begin{enumerate}
\item{For all $\beta < \omega_1$ and all $n<\omega$, $X^\beta \cap A_n$ is finite.}
\item{For all $\beta < \omega_1$ and all $\alpha < \alpha' < \beta$, $X^\beta_\alpha \subseteq X^\beta_{\alpha'}$.}
\item{For all $\beta < \omega_1$ and all $\alpha < \beta$, $X^\beta_\alpha \cap B_\alpha$ is infinite.}
\item{For all $\beta < \beta' < \omega_1$, $\bigcup_{\alpha < \beta} (X^\beta_\alpha \setminus X^{\beta'}_\alpha)$ is finite.}
\end{enumerate}
\end{claim}

\begin{proof}
We construct $\langle X^\beta_\alpha \mid \alpha < \beta < \omega_1 \rangle$ by recursion on $\beta$. First, let $X^1_0 = \{\min(A_{n+1} \setminus (A_n \cup \sigma_0(n+1))) \mid n<\omega \}$.

Next, suppose $\beta' < \omega_1$ and we have constructed $\langle X^\beta_\alpha \mid \alpha < \beta \leq \beta' \rangle$ satisfying requirements 1-4 above. For $\alpha < \beta'$, let $X^{\beta'+1}_\alpha = X^{\beta'}_\alpha$, and let $X^{\beta'+1}_{\beta'} = X^{\beta'} \cup \{\min(A_{n+1} \setminus (A_n \cup \sigma_{\beta'}(n+1)))\mid n<\omega \}$. It is immediate by the inductive hypothesis that this still satisfies the requirements.

Finally, suppose $\beta' < \omega_1$ is a limit ordinal and that we have constructed $\langle X^\beta_\alpha \mid \alpha < \beta < \beta' \rangle$. Fix a bijection $\tau : \omega \rightarrow \beta'$. For $\alpha < \beta'$, denote $\tau^{-1}(\alpha)$ as $i_\alpha$. Note that, for a fixed $\alpha$, $\langle X^\beta_\alpha \mid \alpha < \beta < \beta' \rangle$ is $\subseteq^*$-decreasing, and each $X^\beta_\alpha$ has an infinite intersection with $B_{\alpha}$, so any finite subsequence has an infinite intersection contained in $B_{\alpha}$. For all $\alpha < \beta'$, we will first define $Y_\alpha = \{ y^\alpha_m \mid m< \omega \}$ by recursion on $m$. We will also define an increasing sequence of natural numbers, $\langle n^\alpha_m \mid m<\omega \rangle$.

Fix $\alpha < \beta'$. Let $n^\alpha_0$ be the least $n$ such that \[\bigcap_{\substack{j\leq i_\alpha \\ \tau(j)>\alpha}}X^{\tau(j)}_\alpha \cap (A_n \setminus \sigma_\alpha(n))\] is nonempty, and let $y^\alpha_0$ be an element of this intersection. Given $y^\alpha_m$ and $n^\alpha_m$, let $n^\alpha_{m+1}$ be the least $n>n^\alpha_m$ such that \[ \bigcap_{\substack{j\leq i_\alpha+m \\ \tau(j) > \alpha}}X^{\tau(j)}_\alpha \cap (A_n \setminus (\sigma_\alpha(n)\cup (y^\alpha_m+1))) \] is nonempty, and let $y^\alpha_{m+1}$ be an element of this intersection. Note that we can always find such an $n^\alpha_{m+1}$, since \[ \bigcap_{\substack{j\leq i_\alpha+m \\ \tau(j) > \alpha}}X^{\tau(j)}_\alpha \cap B_{\alpha}\] is infinite.

If $\alpha < \beta < \beta'$ and $i_\beta \leq i_\alpha+m$, $y^\alpha_k \in X^\beta_\alpha$ for all $k>m$, so $Y_\alpha \subseteq^* X^\beta_\alpha$ and, if $i_\beta < i_\alpha$, we actually have $Y_\alpha \subseteq X^\beta_\alpha$. From this, it follows that, for all $n<\omega$, $Y_\alpha \cap A_n$ is finite. Also, by construction, $Y_\alpha \cap B_{\alpha}$ is infinite. Now let \[Y^*_\alpha = Y_\alpha \setminus \bigcup_{n<i_\alpha}A_n.\] Note that $Y^*_\alpha$ has all of the properties of $Y_\alpha$ mentioned above. Finally, let \[X^{\beta'}_\alpha = \bigcup_{\gamma \leq \alpha} Y^*_\gamma.\]

We claim that this construction has succeeded. To see that requirement 1 is satisfied, note that for all $n<\omega$, $\{\alpha < \beta' \mid Y^*_\alpha \cap A_n \not= \emptyset \} \subseteq \tau``(n+1)$, which is finite, and, for each $\alpha < \beta'$, $Y^*_\alpha \cap A_n$ is finite, so $X^{\beta'} \cap A_n$ is the finite union of finite sets and hence finite.

Requirements 2 and 3 are obviously satisfied by the construction. To see 4, fix $\beta < \beta'$. If $\alpha < \beta$ is such that $i_\alpha > i_\beta$, then $Y^*_\alpha \subseteq X^\beta_\alpha$, so \[\bigcup_{\alpha < \beta} (X^\beta_\alpha \setminus X^{\beta'}_{\alpha}) \subseteq \bigcup_{\substack{\alpha < \beta \\ \i_\alpha < i_\beta}}(Y^*_\alpha \setminus X^\beta_\alpha),\] which is finite.
\end{proof}

We are now ready to define $\overrightarrow{f} = \langle f_\beta \mid \beta < \omega_1 \rangle$. First, if $\beta < \omega_1$ and $k\in X^\beta$, let $\alpha^\beta_k$ be the least $\alpha < \beta$ such that $k\in X^\beta_\alpha$. For $\beta < \omega_1$ and $k<\omega$, let \[f_\beta(k) = 
\begin{cases}
\omega_1 \cdot \beta & \text{if } k\not\in X^\beta \\ 
\omega_1 \cdot \alpha^\beta_k + \beta & \text{if } k\in X^\beta
\end{cases}\]

\begin{claim}
$\overrightarrow{f}$ is $<^*$-increasing.
\end{claim}

\begin{proof}
Let $\beta < \beta' < \omega_1$. If $k<\omega$, the only way we can have $f_{\beta'}(k) \leq f_\beta(k)$ is if there is $\alpha < \beta$ such that $k\in X^\beta_\alpha \setminus X^{\beta'}_\alpha$. By construction, there are only finitely many such values of $k$.
\end{proof}

\begin{claim}
For every $n<\omega$, $c_{\omega_1^2} \restriction A_n$ is an eub for $\overrightarrow{f}^{A_n}$.
\end{claim}

\begin{proof}
For each $\beta<\omega_1$ and $n<\omega$, $X^\beta \cap A_n$ is finite, so $f_\beta \restriction A_n =^* c_{\omega_1 \cdot \beta} \restriction A_n$. Since $c_{\omega_1^2}\restriction A_n$ is an eub for $\langle c_{\omega_1 \cdot \beta} \restriction A_n \mid \beta < \omega_1 \rangle$, it is also an eub for $\overrightarrow{f}^{A_n}$.
\end{proof}

\begin{claim}
For all $\alpha, \beta < \omega_1$, $c_{\omega_1 \cdot (\alpha+1)} \restriction B_{\alpha} \not<^* f_\beta \restriction B_{\alpha}$.
\end{claim}

\begin{proof}
If $\beta \leq \alpha$, then $f_\beta(k) < \omega_1 \cdot (\alpha+1)$ for all $k<\omega_1$, so $f_\beta < c_{\omega_1 \cdot (\alpha+1)}$. If $\beta > \alpha$, then $X^\beta_\alpha \cap B_{\sigma_\alpha}$ is infinite and, for all $k\in X^\beta_\alpha$, $f_\beta(k) < \omega_1 \cdot (\alpha+1)$. Thus, $c_{\omega_1 \cdot (\alpha+1)} \restriction B_{\sigma_\alpha} \not<^* f_\beta \restriction B_{\alpha}$.
\end{proof}
By the above claims, we can let $g = c_{\omega_1^2}$ and, for $\xi < \omega_1$, $h_\xi = c_{\omega_1 \cdot \xi}$. Let $\sigma \in {^\omega}\omega$. Since $\langle \sigma_\alpha \mid \alpha < \omega_1 \rangle$ is dominating, we can find $\alpha < \omega_1$ such that $\sigma <^* \sigma_\alpha$. Then $B_\sigma \subseteq^* B_{\alpha}$, so, by the last claim, for all $\beta < \omega_1$, $h_{\alpha+1} \restriction B_\sigma \not<^* f_\beta \restriction B_\sigma$.
\end{proof}

Thus, at individual points of scales we can have a situation in which $\beta$ is good for $\overrightarrow{f}^{A_n}$ for every $n<\omega$ but fails to be good for $\overrightarrow{f}^{B_\sigma}$ for every $\sigma \in {^\omega}\omega$. It remains open whether this can happen simultaneously at stationarily many points, thus providing a counterexample to $I_{gd}[\kappa]$ being a P-ideal. In fact, only something slightly weaker needs to happen to provide a counterexample, namely, for every $\sigma \in {^\omega}\omega$, there are stationarily many $\beta \in \kappa^+ \cap \mathrm{cof}(\geq \omega_1)$ such that $\beta$ is good for every $\overrightarrow{f}^{A_n}$ but fails to be good for $\overrightarrow{f}^{B_\sigma}$.

We now show that, starting in any ground model, a very mild forcing, namely a finite support iteration of Hechler forcing, forces $I_{gd}[\kappa]$ to be a P-ideal for every singular $\kappa$ of countable cofinality. This further suggests that the question under consideration perhaps has more to do with the structure of ${^\omega}\omega$ than with PCF-theoretic behavior at higher cardinals. 

We first recall the Hechler forcing notion. Conditions in the forcing poset are of the form $p = (s^p, f^p)$, where $s^p \in {^{n^p}\omega}$ for some $n^p < \omega$ and $f^p \in {^\omega}\omega$. $q\leq p$ if and only if 
\begin{enumerate}
\item{$n^q \geq n^p$} 
\item{$s^q \restriction n^p = s^p$}
\item{For every $k \in [n^p, n^q)$, $s^q(k)>f^p(k)$.}
\item{$f^q>f^p$.}
\end{enumerate}
Hechler forcing adds a dominating real, i.e. a $\sigma^* \in {^\omega}\omega$ such that, for every $\sigma \in ({^\omega}\omega)^V$, $\sigma <^* \sigma^*$.

We will need the following fact from \cite{abrahammagidor}.

\begin{fact} \label{goodstat}
Let $X$ be a set, let $I$ be an ideal on $X$, and let $\overrightarrow{f} = \langle f_\alpha \mid \alpha < \zeta \rangle$ be a $<_I$-increasing sequence of functions in ${^X}\mathrm{On}$. Suppose that $\lambda$ is such that $|X| < \lambda = \mathrm{cf}(\lambda) < \mathrm{cf}(\zeta)$. Then the following are equivalent.
\begin{enumerate}
\item{There are stationarily many $\beta \in \zeta \cap \mathrm{cof}(\lambda)$ such that $\overrightarrow{f} \restriction \beta$ has an eub of uniform cofinality $\lambda$.}
\item{$\overrightarrow{f}$ has an eub, $g$, such that, for all $x\in X$, $\mathrm{cf}(g(x))\geq \lambda$.}
\end{enumerate}
\end{fact}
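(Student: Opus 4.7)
The plan is to prove both directions, with Proposition \ref{goodprop} handling $(2) \Rightarrow (1)$ after a direct construction, and $(1) \Rightarrow (2)$ requiring a limit-of-eubs construction together with a delicate cofinality analysis.

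For $(2) \Rightarrow (1)$, suppose $g$ is an eub for $\overrightarrow{f}$ with $\mathrm{cf}(g(x)) \geq \lambda$ for every $x \in X$, and fix an arbitrary club $C \subseteq \zeta$. I would recursively construct strictly increasing sequences $\langle \alpha_\xi \mid \xi < \lambda \rangle \subseteq C$ and $\langle h_\xi \mid \xi < \lambda \rangle$ of functions in ${^X}\mathrm{On}$ satisfying $h_\xi(x) < g(x)$ for every $x$ and $f_{\alpha_\xi} <_I h_\xi <_I f_{\alpha_{\xi+1}}$. At each stage, $h_\xi(x)$ can be chosen strictly above both $f_{\alpha_\xi}(x)$ and $\sup_{\xi' < \xi} h_{\xi'}(x)$ yet strictly below $g(x)$, using $|X| < \lambda \leq \mathrm{cf}(g(x))$ and the fact that fewer than $\lambda$ earlier stages exist; then $\alpha_{\xi+1} \in C$ is produced from the eub property of $g$ applied to $h_\xi$. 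Setting $\beta = \sup_\xi \alpha_\xi$, we obtain $\beta \in C \cap \mathrm{cof}(\lambda)$ with $\langle h_\xi \rangle$ cofinally interleaved with $\overrightarrow{f} \restriction \beta$, so by Proposition \ref{goodprop} the function $x \mapsto \sup_\xi h_\xi(x)$ is an eub for $\overrightarrow{f} \restriction \beta$ of uniform cofinality $\lambda$. Since $C$ was arbitrary, such $\beta$ form a stationary set.

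For $(1) \Rightarrow (2)$, let $S \subseteq \zeta \cap \mathrm{cof}(\lambda)$ be stationary and, for each $\beta \in S$, let $g_\beta$ be an eub for $\overrightarrow{f} \restriction \beta$ of uniform cofinality $\lambda$. I would first observe that $\langle g_\beta \mid \beta \in S \rangle$ is strictly $<_I$-increasing: for $\beta < \beta'$ in $S$ the pointwise minimum of $g_\beta$ and $g_{\beta'}$ is still an upper bound of $\overrightarrow{f} \restriction \beta$, so by minimality of $g_\beta$ it agrees with $g_\beta$ modulo $I$, yielding $g_\beta \leq_I g_{\beta'}$; strictness follows from $f_\beta <_I g_{\beta'}$ combined with $f_\beta \not<_I g_\beta$, the latter because otherwise the eub property of $g_\beta$ would give $f_\beta <_I f_\alpha$ for some $\alpha < \beta$. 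Set $g(x) = \sup_{\beta \in S} g_\beta(x)$. The bound $f_\alpha <_I g$ is immediate from $f_\alpha <_I g_\beta \leq g$ for any $\beta \in S$ above $\alpha$. For the minimality of $g$, given $h <_I g$ one would like to find a single $\beta \in S$ with $h <_I g_\beta$, so that the eub property of $g_\beta$ supplies $\alpha < \beta$ with $h <_I f_\alpha$; this step uses $|X| < \lambda < \mathrm{cf}(\zeta)$ together with the $<_I$-coherence of the $g_\beta$'s to route $h$ below a single $g_\beta$.

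The main obstacle is verifying $\mathrm{cf}(g(x)) \geq \lambda$ for $I$-almost every $x$, after which $g$ may be modified on the exceptional $I$-null set to arrange the cofinality condition on the nose. The subtlety is that $\langle g_\beta \rangle$ is only $<_I$-increasing, not pointwise monotone at a single coordinate, so $\mathrm{cf}(g(x))$ does not transparently inherit from $\mathrm{otp}(S)$. Supposing the bad set $X^\ast := \{x : \mathrm{cf}(g(x)) < \lambda\}$ is $I$-positive, one fixes for each $x \in X^\ast$ a cofinal sequence of length $\mu_x < \lambda$ in $g(x)$ and studies the restrictions $g_\beta \restriction X^\ast$ as elements of the pointwise-ordered product $\prod_{x \in X^\ast} g(x)$. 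Since $|X^\ast| < \lambda$ and each $\mu_x < \lambda$, this product admits a cofinal family of size less than $\lambda$; pigeonholing against $|S| = \mathrm{cf}(\zeta) > \lambda$ produces a cofinal-in-$\zeta$ subset $S' \subseteq S$ such that all $g_\beta \restriction X^\ast$ for $\beta \in S'$ are dominated by a single fixed function, contradicting the strict $<_I$-increase of $\langle g_\beta \mid \beta \in S' \rangle$ witnessed on the $I$-positive set $X^\ast$.
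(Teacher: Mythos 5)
The paper does not actually prove this Fact --- it is quoted from the Abraham--Magidor chapter --- so there is no in-paper argument to compare against; I will assess your proposal on its own. Your direction $(2) \Rightarrow (1)$ is the standard argument and is correct (modulo the routine point that $f_{\alpha_\xi}(x) < g(x)$ only holds $I$-almost everywhere, so $h_\xi$ should be pushed above $f_{\alpha_\xi}$ only on that $I$-large set, which still yields $f_{\alpha_\xi} <_I h_\xi$).

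Direction $(1) \Rightarrow (2)$ has two genuine gaps. First, the step that "routes $h$ below a single $g_\beta$" does not go through for an arbitrary ideal: choosing for ($I$-almost) each $x$ some $\beta_x$ with $h(x) < g_{\beta_x}(x)$ and passing to $\beta^* \in S$ above $\sup_x \beta_x$ only gives $g_{\beta_x}(y) \leq g_{\beta^*}(y)$ off a null set $N_x$ \emph{depending on $x$}, and the diagonal set $\{x : x \in N_x\}$ need not be in $I$, since $I$ is not assumed $|X|^+$-complete. (This is not a removable hypothesis: in the paper's application $I$ is the ideal of finite subsets of a countable set.) Second, and more seriously, the concluding cofinality analysis fails on both counts. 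The pointwise-ordered product $\prod_{x \in X^*} g(x)$ need not admit a cofinal family of size $< \lambda$ when the $\mu_x$ vary: with $|X^*| = \omega$, $\mu_n = \aleph_{n+1}$ and $\lambda = \aleph_{\omega+1}$, any family cofinal under everywhere-domination is also cofinal mod finite and so has size at least $\aleph_{\omega+1} = \lambda$. And even granting such a family, the conclusion that all $g_\beta \restriction X^*$ for $\beta$ in a cofinal $S'$ lie below one function $d < g \restriction X^*$ contradicts nothing: a $<_I$-increasing sequence can certainly be bounded pointwise (indeed $g$ itself bounds every $g_\beta$), and since the $g_\beta$ are not pointwise monotone there is no reason that $\sup_{\beta \in S'} g_\beta = g$ on $X^*$. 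The correct contradiction must exploit the uniform cofinality $\lambda$ at a \emph{single} good point $\beta$ of cofinality $\lambda$: taking the interleaved pointwise-increasing sequence $\langle h_\xi \mid \xi < \lambda \rangle$ with supremum $g_\beta$, the functions $x \mapsto \min\bigl(C_x \setminus (h_\xi(x)+1)\bigr)$ are nondecreasing in $\xi$ and take fewer than $\lambda$ values at each of the fewer than $\lambda$ coordinates, hence (by regularity of $\lambda$) stabilize at a function that dominates $g_\beta$ everywhere on $X^*$ while being, via the interleaving and the eub property, strictly below $g_\beta$ modulo $I \restriction X^*$; that is where the positivity of $X^*$ is refuted. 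As written, your argument does not reach this.
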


\begin{lemma}
\label{Hech_1}
Let $\kappa$ be a singular cardinal of countable cofinality. For each $n<\omega$, let $A_n$ be an increasing $\omega$-sequence of regular cardinals, cofinal in $\kappa$, such that $\prod A_n$ carries a good scale. Let $\mathbb{P}$ be Hechler forcing. Then, in $V^\mathbb{P}$, there is an $\omega$-sequence $B\subset \kappa$ such that, for all $n<\omega$, $A_n \subseteq^* B$ and $\prod B$ carries a good scale.
\end{lemma}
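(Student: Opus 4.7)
The plan is to push all the work into choosing a single master scale in $V$ and then let the Hechler generic pick out the right cofinal subset. First, replacing $A_n$ by $\bigcup_{m \leq n} A_m$ I may assume $A_n \subseteq A_{n+1}$, and then, as explained at the start of the section, I may fix in $V$ an $\omega$-sequence $A$ with $A = \bigcup_{n<\omega} A_n$ such that $\prod A$ carries a scale $\overrightarrow{f} = \langle f_\alpha \mid \alpha < \kappa^+ \rangle$. Because each $A_n$ carries a good scale, the first Foreman--Magidor fact from Section 1 makes each $\overrightarrow{f}^{A_n}$ good, so I fix a club $C \in V$ along which every $\beta$ with $\omega_1 \leq \mathrm{cf}(\beta) < \kappa$ is simultaneously good for every $\overrightarrow{f}^{A_n}$.

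Next, let $G$ be $\mathbb{P}$-generic with associated Hechler real $\sigma^* \in {^\omega}\omega$, and set $B = B_{\sigma^*}$, so $A_n \subseteq^* B$ for every $n$. Since $\mathbb{P}$ is c.c.c., $\kappa^+$, all cofinalities, and the clubness of $C$ are preserved. A routine bounding argument (at each sufficiently large $i \in A$, the possible values of any $\mathbb{P}$-name in $\prod A$ at $i$ form a countable, hence bounded, subset of $i$) shows $\overrightarrow{f}$ remains a scale in $\prod A$ in $V[G]$, and therefore $\overrightarrow{f}^B$ is a scale of length $\kappa^+$ in $\prod B$.

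The core of the argument is verifying that $\overrightarrow{f}^B$ is good. Fix $\beta \in C$ with $\omega_1 \leq \mathrm{cf}(\beta) < \kappa$. Working in $V$, Proposition~\ref{goodprop} provides, for each $n$, an eub $g_n$ of $\overrightarrow{f}^{A_n} \restriction \beta$ of uniform cofinality $\mathrm{cf}(\beta)$; uniqueness of eubs modulo bounded sets lets me paste these into a single $g \in \prod A$ with $g \restriction A_n =^* g_n$, after which I fix $V$-sequences $\langle \gamma^i_\xi \mid \xi < \mathrm{cf}(\beta) \rangle$ cofinal in each $g(i)$ and set $h_\xi(i) = \gamma^i_\xi$. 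For each $\alpha < \beta$, using $\mathrm{cf}(\beta) \geq \omega_1$, I pick $\xi_\alpha < \mathrm{cf}(\beta)$ with $f_\alpha \restriction A_n <^* h_{\xi_\alpha} \restriction A_n$ for every $n$, and let $\sigma_\alpha \in {^\omega}\omega$ record the necessary truncation at each level, so that $f_\alpha \restriction B_{\sigma_\alpha} < h_{\xi_\alpha} \restriction B_{\sigma_\alpha}$. Symmetrically I produce $\alpha_\xi < \beta$ and $\tau_\xi \in {^\omega}\omega$ for each $\xi < \mathrm{cf}(\beta)$.

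All these witnesses live in $V$, so back in $V[G]$ the defining property of $\sigma^*$ gives $\sigma_\alpha <^* \sigma^*$ and $\tau_\xi <^* \sigma^*$, which unwind to $B \subseteq^* B_{\sigma_\alpha}$ and $B \subseteq^* B_{\tau_\xi}$ for every relevant $\alpha, \xi$. Restricting to $B$ then preserves the original inequalities, so $\langle h_\xi \restriction B \mid \xi < \mathrm{cf}(\beta) \rangle$ is cofinally interleaved with $\overrightarrow{f}^B \restriction \beta$ in $V[G]$, and Proposition~\ref{goodprop} declares $\beta$ good for $\overrightarrow{f}^B$. The main subtlety is that at each $\beta$ there are $|\beta|$-many ground-model witnesses to dominate, and $|\beta|$ may be much larger than $2^{\aleph_0}$, so there is no hope of finding a single $V$-function dominating them all; what saves the argument is that $\sigma^*$ $<^*$-dominates each $V$-function \emph{individually}, which is precisely what is required.
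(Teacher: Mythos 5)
Your proposal is correct and follows essentially the same route as the paper's proof: fix the scale, clubs, eubs, and interleaving witnesses (together with the truncation functions $\sigma_\alpha, \tau_\xi \in {^\omega}\omega$) in the ground model, then use the fact that the Hechler real dominates each of these ground-model functions individually to transfer the interleaving to $B$. The bounding argument for preservation of the scale and the use of Proposition~\ref{goodprop} also match the paper's argument.
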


\begin{proof}
In $V$, let $B_0$ be an $\omega$-sequence, cofinal in $\kappa$, such that $A_n \subseteq^* B_0$ for all $n<\omega$ and $\prod B_0$ carries a scale. Without loss of generality, we may assume that $B_0 = \bigcup_{n<\omega} A_n$ and that all elements of $B_0$ are uncountable. Also, for each $n<\omega$, enumerate $A_n$ in increasing order by $\langle i^n_k \mid k<\omega \rangle$. Let $\overrightarrow{f} = \langle f_\alpha \mid \alpha < \kappa^+ \rangle$ be a scale in $\prod B_0$. Then, for each $n<\omega$, $\overrightarrow{f}^{A_n}$ is a scale in $\prod A_n$ and, since each $\prod A_n$ carries a good scale, each $\overrightarrow{f}^{A_n}$ is good. Let $C_n$ be club in $\kappa^+$ such that, if $\beta \in C_n \cap \mathrm{cof}(\geq \omega_1)$, then $\beta$ is good for $\overrightarrow{f}^{A_n}$, and let $C = \bigcap_{n<\omega} C_n$.

Let $\beta \in C\cap \mathrm{cof}(\geq \omega_1)$ and, for each $n<\omega$, let $g^\beta_n \in \prod A_n$ be an eub for $\overrightarrow{f}^{A_n} \restriction \beta$ such that $\mathrm{cf}(g^\beta_n(i))=\mathrm{cf}(\beta)$ for all $i\in A_n$. Let $g_\beta \in \prod B_0$ be such that, for all $i\in B_0$, $\mathrm{cf}(g_\beta(i)) = \mathrm{cf}(\beta)$ and, for all $n<\omega$, $g_\beta \restriction A_n =^* g^\beta_n$.

For each $i\in B_0$, let $\langle \beta^i_\xi \mid \xi < \mathrm{cf}(\beta) \rangle$ be increasing and cofinal in $g_\beta (i)$. For $\xi < \mathrm{cf}(\beta)$, let $h^\beta_\xi \in \prod B_0$ be defined by $h^\beta_\xi(i) = \beta^i_\xi$. For $\xi < \xi' < \mathrm{cf}(\beta)$, we have $h^\beta_\xi < h^\beta_{\xi'} < g_\beta$.

For $\xi < \mathrm{cf}(\beta)$ and $n<\omega$, let $\alpha^n_\xi < \beta$ be such that $h^\beta_\xi \restriction A_n <^* f_{\alpha^n_\xi} \restriction A_n$. Such an ordinal exists, because $h_\xi < g_\beta$ and $g_\beta \restriction A_n$ is an eub for $\langle f_\alpha \restriction A_n \mid \alpha < \beta \rangle$. Let $a_\xi = \sup(\{a^n_\xi \mid n<\omega\})$. Define a function $\sigma^\beta_\xi \in {^\omega}\omega$ by letting $\sigma^\beta_\xi(n)$ be the least $j$ such that $h^\beta_\xi (i^n_k) < f_{\alpha_\xi}(i^n_k)$ for all $k\geq j$.

For $\alpha < \beta$ and $n<\omega$, $f_\alpha \restriction A_n <^* g_\beta \restriction A_n$, so, since $\mathrm{cf}(\beta) > \omega$, there is $\xi^n_\alpha < \mathrm{cf}(\beta)$ such that $f_\alpha \restriction A_n <^* h^\beta_{\xi^n_\alpha}\restriction A_n$. Let $\xi_\alpha = \sup(\{\xi^n_\alpha \mid n<\omega \})$. Define a function $\tau^\beta_\alpha \in {^\omega}\omega$ by letting $\tau^\beta_\alpha (n)$ be the least $j$ such that $f_\alpha (i^n_k) < h^\beta_{\xi_\alpha}(i^n_k)$ for all $k\geq j$.

Now let $G$ be $\mathbb{P}$-generic over $V$. Since $\mathbb{P}$ has the c.c.c., all cardinalities and cofinalities are preserved by $\mathbb{P}$.

\begin{claim}
In $V[G]$, $\overrightarrow{f}$ is still a scale in $\prod B_0$.
\end{claim}
\begin{proof}
$\overrightarrow{f}$ is clearly still $<^*$ increasing, so it remains to check that it is cofinal in $\prod B_0$. To this end, let $\dot{h}$ be a $\mathbb{P}$ name for a member of $\prod B_0$. For $i\in B_0$, let $X_i = \{\delta \mid \mbox{for some } p\in \mathbb{P}$, $p \Vdash ``\dot{h}(\check{i}) = \check{\delta}" \}$. By the c.c.c., each $X_i$ is countable, so we may define a function $h^* \in \prod B_0$ in $V$ by $h^*(i) = \sup(X_i)$. Then there is $\alpha < \kappa^+$ such that $h^* <^* f_\alpha$, so $\Vdash ``\dot{h} <^* \check{f}_\alpha "$.
\end{proof}

In $V[G]$, let $\sigma \in {^\omega}\omega$ be the real added by $G$. In particular, $\sigma$ dominates all reals in $V$. Let $B = \bigcup_{n<\omega} (A_n \setminus i^n_{\sigma(n)})$. Clearly, $B$ is an $\omega$-sequence cofinal in $\kappa$ such that, for all $n<\omega$, $A_n \subseteq^* B$.

\begin{claim}
For all $\beta \in C \cap \mathrm{cof}(\geq \omega_1)$, $\beta$ is good for $\overrightarrow{f}^B$.
\end{claim}

\begin{proof}
Fix $\beta \in C \cap \mathrm{cof}(\geq \omega_1)$. It suffices to show that $\langle h^\beta_\xi \restriction B \mid \xi < \mathrm{cf}(\beta) \rangle$ is cofinally interleaved with $\langle f_\alpha \restriction B \mid \alpha < \beta \rangle$. Fix $\alpha < \beta$, and let $\xi$ be the $\xi_\alpha$ used above in the definition of $\tau^\beta_\alpha$. $\tau^\beta_\alpha <^* \sigma$, so there is $m<\omega$ such that $\tau^\beta_\alpha(n) < \sigma(n)$ for all $n\geq m$. Thus, by the definition of $\tau^\beta_\alpha$, we know that for all $n\geq m$, $f_\alpha \restriction \bigcup_{n\geq m}(A_n \setminus i^n_{\sigma(n)}) < h^\beta_\xi \restriction \bigcup_{n\geq m}(A_n \setminus i^n_{\sigma(n)})$. Also, for each $n<m$, $f_\alpha \restriction A_n <^* h^\beta_\xi \restriction A_n$. Putting this together, we get $f_\alpha \restriction B <^* h^\beta_\xi \restriction B$. An identical argument shows that, for all $\xi < \mathrm{cf}(\beta)$, there is $\alpha < \beta$ such that  $h^\beta_\xi \restriction B <^* f_{\alpha} \restriction B$.
\end{proof}
Thus, $\overrightarrow{f}^B$ is a good scale in $\prod B$, and the proof is complete.
\end{proof}

\begin{theorem}
Let $\langle \mathbb{P}_\gamma \mid \gamma \leq \omega_1 \rangle$ be a finite-support iteration of Hechler forcing. Then, in $V^{\mathbb{P}_{\omega_1}}$, for every singular cardinal $\kappa$ of countable cofinality, $I_{gd}[\kappa]$ is a P-ideal.
\end{theorem}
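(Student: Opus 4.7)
The plan is to reduce to Lemma \ref{Hech_1} by absorbing the countably many $A_n$'s into some intermediate stage $V[G_\gamma]$ for $\gamma<\omega_1$, applying that lemma via the single Hechler factor $\mathbb{P}_{\gamma+1}/\mathbb{P}_\gamma$, and then propagating the resulting good scale through the c.c.c.\ tail $\mathbb{P}_{\omega_1}/\mathbb{P}_{\gamma+1}$. Work in $V[G_{\omega_1}]$, and fix $\langle A_n\mid n<\omega\rangle$ in $I_{gd}[\kappa]$, taken without loss of generality to be $\subseteq$-increasing and each infinite. The first step is a standard nice-name argument to find $\gamma<\omega_1$ with $\langle A_n\mid n<\omega\rangle\in V[G_\gamma]$: each $A_n$ is a countable set of ordinals, and by c.c.c.\ of $\mathbb{P}_{\omega_1}$, each ordinal-valued coordinate has a nice name consisting of a single countable maximal antichain of finitely-supported conditions; aggregating over the countably many coordinates and countably many $A_n$ yields countably many conditions with a countable union of supports, contained in some $\gamma<\omega_1$.

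Next I would apply Lemma \ref{Hech_1} in the model $V[G_\gamma]$, treating $\mathbb{P}_{\gamma+1}/\mathbb{P}_\gamma$ as the Hechler factor whose generic produces $V[G_{\gamma+1}]$. This returns $B\in V[G_{\gamma+1}]$, an $\omega$-sequence of regular cardinals cofinal in $\kappa$ with $A_n\subseteq^* B$ for all $n$, together with a good scale $\overrightarrow{f}^B$ in $\prod B$ witnessed by some club $C\subseteq\kappa^+$.

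The main obstacle lies in the final step: arguing that $\overrightarrow{f}^B$ remains a good scale in $V[G_{\omega_1}]$. The tail $\mathbb{P}_{\omega_1}/\mathbb{P}_{\gamma+1}$ is c.c.c., so $\kappa^+$ and all cofinalities below it are preserved, and $C$ remains a club. For cofinality of $\overrightarrow{f}^B$ in $(\prod B)^{V[G_{\omega_1}]}$, I would mimic the corresponding claim from the proof of Lemma \ref{Hech_1}: given $h\in(\prod B)^{V[G_{\omega_1}]}$ and $i\in B$, c.c.c.\ leaves only countably many possible values for $h(i)$, with supremum strictly below the uncountable regular $i$, so $h$ is pointwise dominated by some $h^*\in V[G_{\gamma+1}]\cap\prod B$, which is then $<^*$-dominated by some $f_\alpha\restriction B$. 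For goodness, each $\beta\in C\cap\mathrm{cof}(>\omega)$ carries in $V[G_{\gamma+1}]$ a witness consisting of an unbounded $D_\beta\subseteq\beta$ and some $\eta_\beta<\kappa$; the unboundedness of $D_\beta$ in $\beta$ together with the pointwise comparisons $f_\delta\restriction(B\setminus\eta_\beta)<f_{\delta'}\restriction(B\setminus\eta_\beta)$ for $\delta<\delta'\in D_\beta$ are absolute, so the same pair $(D_\beta,\eta_\beta)$ witnesses the goodness of $\beta$ in $V[G_{\omega_1}]$.
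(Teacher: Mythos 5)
Your overall architecture (absorb the $A_n$'s into a stage $V[G_\gamma]$, apply Lemma \ref{Hech_1} with the next Hechler factor, push the resulting good scale up through the c.c.c.\ tail) matches the paper's, and your first and last steps are fine: the nice-name argument locates a $\gamma$ with $\langle A_n \mid n<\omega\rangle \in V[G_\gamma]$, and the upward preservation of goodness is correct since the witnessing pairs $(D_\beta,\eta_\beta)$ and the relevant cofinalities are absolute under c.c.c.\ forcing. But there is a genuine gap in the middle: Lemma \ref{Hech_1} requires as a hypothesis that each $\prod A_n$ carries a good scale \emph{in the ground model over which you force with Hechler forcing}, i.e.\ in $V[G_\gamma]$. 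You have only arranged that the sets $A_n$ lie in $V[G_\gamma]$ and that each $\prod A_n$ carries a good scale in the final model $V[G_{\omega_1}]$. Neither the good scale itself (a $\kappa^+$-sequence of functions, which no single countable stage need capture) nor, more importantly, the \emph{goodness} of its points can simply be assumed to live in $V[G_\gamma]$: goodness is witnessed by objects of $V[G_{\omega_1}]$, and while goodness of a point is upward absolute, it is not obviously downward absolute to an intermediate model.

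This downward transfer is in fact the bulk of the paper's proof. One first builds, in $V[G_\gamma]$, a scale $\overrightarrow{g}$ in $\prod A$ that dominates pointwise (via countable sets of possible values, using the c.c.c.) any name for a scale in the extension; $\overrightarrow{g}$ remains a scale in $V[G_{\omega_1}]$ and is therefore good there, since every scale on a set carrying a good scale is good. One then proves by induction on $\beta$ that each good point of $\overrightarrow{g}$ in $V[G_{\omega_1}]$ of uncountable cofinality is already good in $V[G_\gamma]$: for $\mathrm{cf}(\beta)=\omega_1$ this uses the $\omega_1$-Knaster property of the tail iteration to extract, from names for a witnessing sequence, a genuine increasing witness in $V[G_\gamma]$; for $\mathrm{cf}(\beta)>\omega_1$ it uses the inductive hypothesis on a club of smaller good points together with Fact \ref{goodstat} to produce an eub in $V[G_\gamma]$ whose coordinates have the right cofinality. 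Without supplying an argument of this kind, your appeal to Lemma \ref{Hech_1} at stage $\gamma$ is not justified.
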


\begin{proof}
Let $\mathbb{P} = \mathbb{P}_{\omega_1}$. Since every pair of conditions $p$ and $q$ in the Hechler poset such that $s^p = s^q$ are compatible, Hechler forcing is $\sigma$-centered. Thus, since $\mathbb{P}$ is a finite-support iteration of length $\omega_1$ of $\sigma$-centered forcings, $\mathbb{P}$ is itself $\sigma$-centered (and thus $\omega_1$-Knaster).

Let $G$ be $\mathbb{P}$-generic over $V$. For $\eta < \gamma \leq \omega_1$, let $\mathbb{P}_\gamma = \mathbb{P}_\eta * \mathbb{P}_{\eta \gamma}$, and let $G_\eta$ and $G_{\eta \gamma}$ be the generic filters induced by $G$ on $\mathbb{P}_\eta$ and $\mathbb{P}_{\eta \gamma}$, respectively.

Let $\kappa$ be a singular cardinal of countable cofinality, and, in $V[G]$, let $A$ be an $\omega$-sequence cofinal in $\kappa$ such that $\prod A$ carries a good scale. By chain condition, there is $\gamma < \omega_1$ such that $A\in V[G_\gamma]$.
\begin{claim}
$\prod A$ carries a good scale in $V[G_\gamma]$.
\end{claim}

\begin{proof}
Work in $V[G_\gamma]$. Let $\dot{\overrightarrow{f}} = \langle \dot{f}_\alpha \mid \alpha < \kappa^+ \rangle$ be a $\mathbb{P}_{\gamma \omega_1}$-name such that $\Vdash_{\mathbb{P}_{\gamma \omega_1}} ``\dot{\overrightarrow{f}}$ is a good scale in $\prod \check{A}"$. For $\alpha < \kappa^+$ and $i\in A$, let $Y_{\alpha, i} = \{\nu \mid \mbox{for some } p\in \mathbb{P}_{\gamma \omega_1}$, $p\Vdash ``\dot{f}_\alpha(i) = \check{\nu}" \}$. Since $\mathbb{P}_{\gamma \omega_1}$ has the c.c.c., each $Y_{\alpha, i}$ is countable.

Now define $\overrightarrow{g} = \langle g_\alpha \mid \alpha < \kappa^+ \rangle$ by recursion on $\alpha$ as follows. Let $g_0$ be an arbitrary function in $\prod A$. Given $g_\alpha$, let $g_{\alpha+1} \in \prod A$ be such that $g_{\alpha+1} > g_\alpha$ and, for all $i \in A$, $g_{\alpha+1}(i) > \sup(Y_{\alpha, i})$. If $\alpha < \kappa^+$ is a limit ordinal, let  $g_\alpha \in \prod A$ be such that $g_\beta <^* g_\alpha$ for all $\beta < \alpha$. This is easily accomplished by a standard diagonalization argument.

It is clear that $\overrightarrow{g}$ is $<^*$-increasing. To see that it is cofinal in $\prod A$, let $h\in \prod A$ be given. Since $\dot{\overrightarrow{f}}$ is forced to be a scale in $\prod A$ and by the c.c.c., there is $\alpha < \kappa^+$ such that $\Vdash_{\mathbb{P}_{\gamma \omega_1}} ``\check{h} <^* \dot{f}_\alpha"$. But $\Vdash_{\mathbb{P}_{\gamma \omega_1}} ``\dot{f}_\alpha < \check{g}_{\alpha+1}"$, so $h <^* g_{\alpha+1}$. Thus, $\overrightarrow{g}$ is a scale in $\prod A$ and, by previous arguments, it remains a scale in $V[G]$. Since $\prod A$ carries a good scale in $V[G]$, $\overrightarrow{g}$ must be good in $V[G]$.

\begin{subclaim}
If $\beta \in \kappa^+ \cap \mathrm{cof}(\geq \omega_1)$ is good for $\overrightarrow{g}$ in $V[G]$, then it is good in $V[G_\gamma]$.
\end{subclaim}

\begin{proof}
We proceed by induction on $\beta$ and split into two cases depending on the cofinality of $\beta$. First, suppose $\mathrm{cf}(\beta) = \omega_1$. In $V[G]$, there is $i \in A$ and $\langle \beta_\xi \mid \xi < \omega_1 \rangle$ witnessing that $\beta$ is good, i.e.
\begin{itemize}
\item{$\langle \beta_\xi \mid \xi < \omega_1 \rangle$ is increasing and cofinal in $\beta$.}
\item{For all $\xi < \xi' < \omega_1$ and all $j \in A \setminus i$, $g_\xi(j)<g_{\xi'}(j)$.}
\end{itemize}
 In $V[G_\gamma]$, find $p \in \mathbb{P}_{\gamma \omega_1}$ such that $p$ forces $\beta$ to be good for $\overrightarrow{g}$, and let $\dot{i}$ and $\langle \dot{\beta}_\xi \mid \xi < \omega_1 \rangle$ be names forced by $p$ to witness that $\beta$ is good. First, find $p^* \leq p$ and $i^* \in A$ such that $p^* \Vdash ``\dot{i} = i^*"$. For $\xi < \omega_1$, find $p_\xi \leq p^*$  and $\beta^*_\xi$ such that $p_\xi ``\Vdash \dot{\beta}_\xi = \beta^*_\xi"$. Since $\mathbb{P}_{\gamma \omega_1}$ is $\omega_1$-Knaster, there is an unbounded $X \subseteq \omega_1$ such that if $\xi, \xi' \in X$, then $p_\xi$ and $p_{\xi'}$ are compatible. Then $i^*$ and $\langle \beta^*_\xi \mid \xi \in X \rangle$ witness that $\beta$ is good for $\overrightarrow{g}$ in $V[G_\gamma]$.
 
 Now suppose $\mathrm{cf}(\beta) > \omega_1$. In $V[G]$, there is a club $C$ in $\beta$ of order type $\mathrm{cf}(\beta)$ such that if $\alpha \in C \cap \mathrm{cof}(\geq \omega_1)$, then $\alpha$ is good for $\overrightarrow{g}$. By chain condition, there is a club $D\subseteq C$ such that $D\in V[G_\gamma]$, and, by induction, every $\alpha \in D \cap \mathrm{cof}(\geq \omega_1)$ is good for $\overrightarrow{g}$ in $V[G_\gamma]$. Thus, by Fact \ref{goodstat}, in $V[G_\gamma]$, $\langle g_\alpha \mid \alpha < \beta \rangle$ has an eub, $h$, such that, for all $i \in A$, $\mathrm{cf}(h(i)) > \omega$. But then, by chain condition, $h$ remains an eub in $V[G]$, where $\beta$ is good for $\overrightarrow{g}$. Thus, $\mathrm{cf}(h(i)) = \mathrm{cf}(\beta)$ for all but finitely many $i \in A$, so $\beta$ is good for $\overrightarrow{g}$ in $V[G_\gamma]$.
\end{proof}
Since $\overrightarrow{g}$ is a good scale in $V[G]$, there is a club $E\subseteq \kappa^+$ such that for all $\beta \in E \cap \mathrm{cof}(\geq \omega_1)$, $\beta$ is good for $\overrightarrow{g}$. By chain condition, there is a club $E' \subseteq E$ such that $E' \in V[G_\gamma]$. The previous subclaim implies that for all $\beta \in E' \cap \mathrm{cof}(\geq \omega_1)$, $\beta$ is good for $\overrightarrow{g}$ in $V[G_\gamma]$. Thus, $\overrightarrow{g}$ is a good scale in $V[G_\gamma]$.
\end{proof}

We finally show that $I_{gd}[\kappa]$ is a P-ideal in $V[G]$. So, in $V[G]$, let $\langle A_n \mid n<\omega \rangle$ be such that, for all $n<\omega$, $A_n \in I_{gd}[\kappa]$. We can assume that none of the $A_n$'s is finite. By the previous claim, there is $\gamma < \omega_1$ such that, for all $n<\omega$, $A_n \in V[G_\gamma]$ and $\prod A_n$ carries a good scale in $V[G_\gamma]$. Then, by Lemma \ref{Hech_1}, in $V[G_{\gamma+1}]$ there is $B\in I_{gd}[\mu]$ such that, for all $n<\omega$, $A_n \subseteq^* B$. Arguments as before show the good scale in $\prod B$ in $V[G_\gamma]$ remains a good scale in $V[G]$, so $B\in I_{gd}[\kappa]$ in $V[G]$ as well. Thus, $I_{gd}[\kappa]$ is a P-ideal in $V[G]$.
\end{proof}

\bibliography{scaleref}
\bibliographystyle{plain}

\end{document}